\newcommand{\wh}{\widehat}
\newcommand{\dis}{\displaystyle}
\newcommand{\bea}{\begin{eqnarray}}
\newcommand{\eea}{\end{eqnarray}}
\def\beaa{\begin{eqnarray*}}
\def\eeaa{\end{eqnarray*}}
\def\ba{\begin{array}}
\def\ea{\end{array}}
\def\be#1{\begin{equation} \label{#1}}
\def \eeq{\end{equation}}
\def\a{{\alpha}}
\def\be{{\beta}}
\def\eps{\epsilon}
\def\om{\omega}
\def\s{{\sigma}}
\def\NN{{\mathcal N}}
\def\HH{{\mathcal H}}
\def\TT{{\mathcal T}}
\def\WW{{\mathcal W}}
\def\UU{{\mathcal U}}
\def\HH{\mathcal H}
\def\DD{{\mathcal D}}
\def\RR{{\mathcal R}}
\def\R{{\mathbb{R}}}
\def\C{{\mathbb{C}}}
\def\N{{\mathbb N}_0}
\def\Z{{\mathbb{Z}}}
\def\T{{\mathbb{T}}}
\newcommand{\ov}{  \overline   }
\newcommand{\<}{  \langle   }
\renewcommand{\>}{  \rangle   }
\newcommand{\ligne}{\vspace{1\baselineskip}}
\newtheorem{theorem}{Theorem}[section]
\newtheorem{lemma}[theorem]{Lemma}
\newtheorem{proposition}[theorem]{Proposition}
\newtheorem{corollary}[theorem]{Corollary}
\newtheorem{remark}[theorem]{Remark}
\numberwithin{equation}{section}
\begin{document}

\author{Zaher Hani}
\address{School of Mathematics, Georgia Institute of Technology, Atlanta, GA 30332, USA}
\email{hani@math.gatech.edu}

\author{Laurent Thomann}
\address{Laboratoire de Math\'ematiques J. Leray, UMR  6629 du CNRS, Universit\'e de Nantes, 
2, rue de la Houssini\`ere,
44322 Nantes Cedex 03, France}
\email{laurent.thomann@univ-nantes.fr}

\thanks{Z.~H. is supported by a Simons Postdoctoral Fellowship, NSF Grant DMS-1301647, and a start-up fund from the Georgia Institute of Technology. L.T. is  supported   by the  grant  ``ANA\'E'' ANR-13-BS01-0010-03.}
\subjclass[2000]{35Q55,35B40}
\keywords{Modified Scattering, Nonlinear Schr\"odinger equation, harmonic potential}

\title[NLS with harmonic potential]{Asymptotic behavior of the nonlinear Schr\"odinger equation with harmonic trapping}


\begin{abstract}
We consider the cubic nonlinear Schr\"odinger equation with harmonic trapping on\;$\R^D$ ($1\leq D\leq 5$). In the case when all but one directions are trapped (a.k.a ``cigar-shaped" trap), following the approach of~\cite{HPTV}, we prove modified scattering and construct modified wave operators for small initial and final data respectively. The asymptotic behavior turns out to be a rather vigorous departure from linear scattering and is dictated by the resonant system of the NLS equation with full trapping on $\R^{D-1}$. In the physical dimension $D=3$, this system turns out to be exactly the (CR) equation derived and studied in~\cite{FGH, GHT1, GHT2}. The special dynamics of the latter equation, combined with the above modified scattering results, allow to justify and extend some physical approximations in the theory of Bose-Einstein condensates in cigar-shaped traps. 
\end{abstract}
\maketitle


\maketitle

\section{Introduction}

The aim of this paper is to study the long-time behavior of the cubic nonlinear Schr\"odinger equation with harmonic trapping given by 
\begin{equation}\label{GeneralNLS}
(i\partial_t  -\Delta_{\R^D} + \sum_{j=1}^D\omega_j x_j^2) U=\kappa_0 |U|^2U,\qquad (x_1, \ldots, x_d)\in \R^D,
\end{equation}
with a particular emphasis on the anisotropic limit $\omega_1=0< \omega_2=\ldots=\omega_D$. Here $\omega_j$ signifies the frequency of the harmonic trapping in the $j-$th direction and $\kappa_0 \neq 0$.\medskip

The motivation for this study is two-fold: On the one hand, we aim at justifying some approximations done in the physics literature that allow reducing the dynamics of~\eqref{GeneralNLS} in the highly anisotropic setting (a.k.a. cigar-shaped trap) to that of the homogeneous (i.e. with no trapping) 1D cubic NLS equation. Such approximations, often referred to as the ``quasi-1D dynamics"~\cite{KFC}, allow access to the complete integrability theory of the 1D cubic NLS equation along with its plethora of special solutions that give theoretical explanations of fundamental phenomena in Bose-Einstein condensates. On the other hand, from a purely mathematical point of view, the analysis falls under the recent progress and interest in understanding the asymptotic behavior of nonlinear dispersive equations in the presence of a confinement. Such a confinement can come from the compactness (or partial compactness) of the domain or via a trapping potential. In either case, this leads to the complete or partial loss of dispersive decay of linear solutions, and consequently complicating and diversifying the picture of long-time dynamics. In this line, using tools developed for the study of long-time dynamics of nonlinear Schr\"odinger equations on product spaces, we will be able to describe the asymptotic dynamics and show that they exhibit highly nonlinear behavior in striking contrast to linear scattering. As a consequence of this description, we get the general extension of the ``quasi-1D approximation" mentioned above to cases when higher and multiple energy levels of the harmonic trap are excited (cf. Section~\ref{sect13}). \medskip 
 
\subsection{Physical context} A Bose-Einstein condensate (BEC for short) is an  aggregate of matter (Bosons) which appears at very low temperature and which is due to the fact that all particles are in the same quantum state.  Their existence was predicted by Bose in 1924 for photons and by Einstein in 1925 for atoms, and they were experimentally observed in 1995 by   W.~Ketterle,   A.~Cornell  and C.~Wieman who were awarded a Nobel Prize shortly after, in 2001, for this achievement. This observation was followed by a burst of activity in the theoretical and experimental study of BEC which constitutes a rare manifestation of a quantum phenomenon which shows through at a macroscopic level. For an nice introduction to this topic we refer to the book~\cite{KFC} and to~\cite{Lew}.\medskip 


 In the physical space $\R^3$, BEC can be realized by trapping particles using a magnetic trap which is modelled in the mean-field theory by the harmonic potential term in~\eqref{GeneralNLS}. The wave function $U(t,x,y_1,y_2)$ of the particles in~\eqref{GeneralNLS} (with $D=3$) 
can be interpreted as the probability density of finding particles at point $(x,y_1, y_2) \in \R^3$ and time $t\in \R$. The sign $\kappa_0=+1$ or $-1$ depends on whether the Boson interaction is attractive (focusing case) or repulsive (defocusing). \medskip

In the case when $\omega_1 \ll \omega_2=\omega_3$, the harmonic trap is often described as ``cigar-shaped", and we will be interested in this case. This regime is of great importance from the physical point of view as it allows for a ``dimensional reduction" in which the condensate is described by better-understood lower-dimensional dynamics. More precisely, a naturally adopted approximation of\;\eqref{GeneralNLS} is obtained by going to the anisotropic limit and setting $\om_1=0$ (which is justified for $x$ not too large) and non-dimensionalizing $\omega_2=\omega_3=1$. Then, the resulting equation is
\begin{equation}\label{PNLS}
(i\partial_t  -\Delta_{\R^3} +   y_1^2+  y_2^2) U=\kappa_0 |U|^2U,\qquad (x,y_1, y_2) \in \R^3.
\end{equation}

\medskip
 In this context, (see for instance~\cite{PGHH} or~\cite[paragraph\;1.3.2]{KFC}) physicists often adopt an Ansatz of the form
 \begin{equation}\label{approx}
 U(t, x, y)\sim \psi(t, x) e^{2i t} e^{-\vert y\vert^2/2}; \qquad y=(y_1, y_2),
 \end{equation}
which leads them through a multiple time-scale expansion to the 1D-dynamics obeyed by $\psi(t,x)$. This dynamics is given by none other than the one dimensional Schr\"odinger equation
\begin{equation} \label{1dNLS} 
  \left\{
      \begin{aligned}
      &(i\partial_t -\partial^2_x)\psi=\kappa_0 \lambda_0 |\psi|^2\psi, \quad (t,x)\in \R\times \R,
       \\  &  \psi(0,x)  =\varphi(x).
      \end{aligned}
    \right.
\end{equation}
This equation is obtained by projecting the nonlinear term in the Ansatz equation on the groundstate $g_{0}(y)=e^{-\vert y\vert^2/2}$ of the harmonic oscillator $-\Delta_{\R^2}+\vert y\vert^2$, thus $\lambda_0=\Vert  g_{0}\Vert^4_{L^4(\R^2)}/\Vert  g_{0}\Vert^2_{L^2(\R^2)}$=1/2. \medskip

The aim of this paper  is to describe the large time behaviour of~\eqref{PNLS} for small, smooth and decaying data (and more generally for the corresponding equations in higher   dimension). One consequence of our work is a justification of the approximation~\eqref{approx} for large times, as well as the correct extension of that approximation when higher and/or multiple energy levels of the quantum harmonic oscillator are excited. We give the relevant result concerning the approximation~\eqref{approx} and refer to the next section for  more general and precise results. Denote by $\mathcal{S}(\R)$ the set of the Schwartz functions, then 
\begin{theorem}\label{thm0}
Let $\varphi\in \mathcal{S}(\R)$ be small enough, and let $\psi$ be the solution of~\eqref{1dNLS}. Then  there exists a solution $U\in \mathcal{C}\big([0,+\infty); L^{2}(\R\times \R^{2})\big)$ of\;\eqref{PNLS} such that 
\begin{equation*}
\big\Vert U(t,x,y)-\psi(t,x)  e^{2it} e^{-\frac{1}{2}|y|^{2}}\big\Vert_{L^2(\R\times \R^2)}\longrightarrow 0\quad\hbox{ as }\;\;\;t\longrightarrow+\infty.
\end{equation*}
Moreover, the function $U$ can be chosen to be axisymmetric: $U(t,x,y)=\widetilde{U}(t,x, |y|)$ for some  $\widetilde{U}$.
\end{theorem}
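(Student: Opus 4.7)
\emph{Strategy.} I would read Theorem~\ref{thm0} as the ``ground-state case'' of a general modified wave operator construction for \eqref{PNLS}. The plan is to choose as target profile
\begin{equation*}
W(t,x,y) := \psi(t,x)\, e^{2it}\, g_0(y),
\end{equation*}
to recognize $W$ as an exact solution of the ``resonant system'' associated with the Hermite expansion of \eqref{PNLS}, supported purely in the ground mode, and then to construct the remainder $R := U - W$ by solving the final-state Cauchy problem via a Duhamel fixed point from $t = +\infty$. The numerical coefficient $\lambda_0 = \|g_0\|_{L^4(\R^2)}^4/\|g_0\|_{L^2(\R^2)}^2 = 1/2$ entering \eqref{1dNLS} appears naturally: it is the only nontrivial four-wave coupling among ground-state modes, equal to $\langle g_0^3, g_0\rangle_{L^2(\R^2)}/\|g_0\|_{L^2(\R^2)}^2$, which is precisely the $g_0$-projection of the nonlinearity $\kappa_0 |W|^2 W$.

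\emph{Implementation.} Using \eqref{1dNLS} together with $(-\Delta_y+|y|^2)g_0 = 2 g_0$, a direct computation yields
\begin{equation*}
(i\partial_t - \Delta + y_1^2 + y_2^2) W = \kappa_0 \lambda_0 |\psi|^2\psi\, e^{2it}\, g_0,
\end{equation*}
so, writing $H := -\Delta + y_1^2 + y_2^2$, the remainder $R$ satisfies
\begin{equation*}
(i\partial_t - H) R = \kappa_0\bigl(|W+R|^2(W+R) - |W|^2 W\bigr) + \kappa_0 |\psi|^2\psi\, e^{2it}\bigl(g_0^3 - \lambda_0 g_0\bigr).
\end{equation*}
The key structural fact is that the source $|\psi|^2\psi\, e^{2it}(g_0^3 - \lambda_0 g_0)$ is orthogonal to $g_0$ in $L^2(\R^2_y)$ by the very choice of $\lambda_0$. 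Expanding $g_0^3 - \lambda_0 g_0$ in the Hermite basis $\{\varphi_n\}_{n\geq 1}$ of $-\Delta_y + |y|^2$ (with eigenvalues $E_n > E_0 = 2$), each surviving component carries a nontrivial phase $E_n - 2 \neq 0$ after conjugation by the linear propagator. I would then set up a contraction in a weighted Sobolev-Hermite space based on the Duhamel representation
\begin{equation*}
R(t) = -i\int_t^{+\infty} e^{-i(t-s)H}\Bigl[\kappa_0 \bigl(|W+R|^2(W+R) - |W|^2 W\bigr)(s) + \kappa_0 |\psi|^2\psi\, e^{2is}(g_0^3 - \lambda_0 g_0)\Bigr] ds,
\end{equation*}
treating the non-resonant source by integrating by parts in $s$ against each phase $e^{is(E_n - 2)}$, converting the oscillation into a factor $1/(E_n - 2)$ and transferring a time derivative onto the cubic profile $|\psi|^2\psi$.

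\emph{Main obstacle and axisymmetry.} The hard point is to produce sufficient temporal decay for the Duhamel integral from $+\infty$ to converge in the chosen norm. The integration by parts just described produces $\partial_s(|\psi|^2\psi)$, which via \eqref{1dNLS} involves $\partial_x^2 \psi$, and absolute $s$-integrability of cubic quantities of this type relies on the pointwise decay $\|\psi(s)\|_{L^\infty_x}\lesssim \langle s\rangle^{-1/2}$ valid for small Schwartz solutions of the 1D cubic NLS, obtained from the Galilean vector field $J(s) = x + 2is\partial_x$ commuting with the linear flow, combined with the energy method; this decay, once combined with the summability in $n$ of $1/(E_n - 2)$ after a further integration by parts or Hermite smoothing, suffices. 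Finally, for the axisymmetry assertion, both the nonlinearity in \eqref{PNLS} and the profile $W$ preserve the subspace of functions of the form $U(t,x,|y|)$; running the fixed point in that subspace produces an axisymmetric $U$ satisfying the claimed asymptotics.
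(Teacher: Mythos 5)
Your high-level strategy is the same as the paper's: take $W(t,x,y)=\psi(t,x)e^{2it}g_0(y)$ whose profile $e^{-it\DD}W=(e^{it\partial_x^2}\psi)\,g_0$ solves the effective (resonant) equation, observe that $(i\partial_t-H)W=\kappa_0\lambda_0|\psi|^2\psi\,e^{2it}g_0$ precisely because $\mathcal T[g_0,g_0,g_0]=\lambda_0 g_0$, and construct $U$ by a final-state Duhamel argument. The axisymmetry argument is also fine. The paper reaches the same endpoint through Theorem~\ref{thm52} (and its predecessor Theorem~\ref{thm51}), i.e., by verifying that $W$ solves $i\partial_t W=\mathcal N_0^t[W,W,W]$ and then appealing to a general modified wave operator statement built on Proposition~\ref{StrucNon}.

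The genuine gap is in your treatment of the Duhamel contraction. You carefully handle the \emph{source} term $|\psi|^2\psi\,e^{2it}(g_0^3-\lambda_0 g_0)$ by exploiting the nonzero Hermite phases $E_n-2$, but you do not address the \emph{linear-in-$R$} contributions of $|W+R|^2(W+R)-|W|^2W$, i.e.\ the terms of the form $2|W|^2R$ and $W^2\bar R$. Projecting onto an arbitrary Hermite mode $\psi_n$, the interaction $\Pi_n\big(g_0\bar g_0\,\Pi_n R\big)$ is resonant in $y$ ($E_0-E_0+E_n-E_n=0$), so there is no Hermite phase to integrate by parts against; combined with the stationary-phase analysis in $x$ (which is precisely where the $\pi/t$ factor and hence the $t^{-1}$ decay comes from), these terms decay only like $s^{-1}$. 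They cannot be ``integrated away'' and instead must be absorbed into the contraction itself. This is exactly what forces the paper to work in a hierarchy of norms $Z\subset S\subset S^+$, with a \emph{fixed} decay rate $\delta$ and data small relative to $\delta$, and with the structural Proposition~\ref{StrucNon} organizing the error $\mathcal N^t-\tfrac{\pi}{t}\mathcal R$ into pieces $\mathcal E_1,\mathcal E_2$ that are either integrable or total time derivatives. A second, related issue: after one integration by parts in $s$ you encounter $\partial_s(|\psi|^2\psi)$, which via \eqref{1dNLS} introduces $\partial_x^2\psi$; since the weighted Sobolev norms of $\psi$ grow slowly in time, the resulting integrand decays only like $s^{-1+C\varepsilon^2}$ and the integral from $t$ to $\infty$ is not absolutely convergent, so one integration by parts is not enough and a more quantitative bookkeeping (again, the $Z/S/S^+$ hierarchy and the losses it tolerates) is needed. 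In short, your plan is correct in outline, but ``set up a contraction in a weighted Sobolev--Hermite space'' suppresses the step that is actually hard: controlling the resonant, non-oscillatory long-range interactions, which is the content of Sections~\ref{sect4}--\ref{sect5}.
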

 This shows that the 1D dynamics of~\eqref{1dNLS} can be embedded in the 3D dynamics of~\eqref{PNLS}, a reduction, known as {\it quasi 1D dynamics}, which is at the basis of the theoretical explanation of many fundamental phenomena in Bose-Einstein condensates. Physicists arrive at it using some multiple time-scale approximations, and use it afterwards to transfer information from the well-understood and completely integrable dynamics of~\eqref{1dNLS} to that of~\eqref{CNLS}. We refer to Section\;\ref{sect13} for more details and extensions.

We remark that the nonlinear Schr\"odinger equation with harmonic potential is also an important model in nonlinear optics, but we do not detail the applications in this context. 

\subsection{Mathematical framework and results} From the mathematical perspective, this work falls under the recent developments in the study of the long-time behavior of nonlinear dispersive equations that are confined either by physical domain or by a potential. Such a confinement has a direct effect on the linear decay afforded by dispersion which either completely disappears or becomes very weak. In the case of a confinement by domain, we refer to~\cite{HPTV} for references on global regularity issues. The question of long-time behavior is much more subtle and much less is known especially on compact domains. In that case, many different long-time dynamics can be sustained starting from arbitrarily small neighborhoods of the zero initial data including KAM tori~\cite{BourgainQuasi, EliaKuk, KukPo, Procesi}, Arnold diffusion orbits~\cite{CKSTTTor, GuKa}, and coherent frequency dynamics~\cite{FGH}. 

A bit more can be said about the asymptotic behavior in the partially periodic cases ({\it e.g.} waveguides $\R^d\times \T^n$) where one can see an interplay between the persistence of the Euclidean behavior of scattering~\cite{TzVi} versus the emergence of new asymptotic dynamics that reflect the (global) non-Euclidean geometry of the underlying manifold. The latter was observed in~\cite{HaPa, HPTV} that studied NLS on the product domain $\R\times \T^d$ which corresponds to the non-compact quotient of $\R^{d+1}$ where linear solutions decay the least, namely like $t^{-1/2}$. There, the resonant periodic interactions played a key role in the asymptotic dynamics either for all small data~\cite{HPTV} or in the analysis of certain sequences of data that appear in the profile decomposition~\cite{HaPa}. 

We will see that a similar picture can be drawn in the case of a potential trap confinement. Studying (NLS) under full harmonic trapping would be the analogue of studying it on a compact domain, whereas studying it with partial trapping corresponds to a partially compact domain. Indeed, our analysis will draw a lot on the recent advances in the latter study, especially on the recent work~\cite{HPTV}. \medskip

We now present our general results. Let $k,d\geq 1$. We define the Laplace operator  $\Delta_{\R^k}=\sum_{j=1}^k \partial^2_{x_k}$ on $\R^k$ and the harmonic oscillator  $\HH_d=\sum_{j=1}^d \big(-\partial^2_{y_j}+\vert y_j\vert^2\big) $ on $\R^d$. Our equation of interest can now be written as 
 \begin{equation*} 
  \left\{
      \begin{aligned}
      &(i\partial_t -\Delta_{\R^k}+\HH_d) U=\kappa_0\vert U\vert^2U, \quad (t,x,y)\in \R\times \R^{k}\times \R^{d},
       \\  &  U(0,x,y)  =U_0(x,y),
      \end{aligned}
    \right.
\end{equation*}
where $U$ is a complex-valued function, $\kappa_0 \in \{+1, -1\}$ is the sign of the nonlinearity ($+1$ for focusing and $-1$ for defocusing). \medskip

The case $k=0$ corresponds to full harmonic trapping. Since the spectrum of $\HH_d$ is discrete, the linear solution does not decay in this case. As was the case for a geometric confinement, this has little effect on global regularity issues~\cite{KV, Jao}. As a result, the asymptotic behavior of~\eqref{CNLS} can be quite rich, but very little is known, apart from the existence of many periodic solutions\;\cite{Carles2} and the existence of KAM tori
when $d=1$, if one allows a perturbation by a multiplicative potential\;\cite{GrTh1}.\medskip

 For $k\geq 1$, the linear solution decays typically like $t^{-k/2}$. This leads to scattering when $k\geq 2$, as was shown by Antonelli, Carles and Silva~\cite{AnCaSi}. We also refer to~\cite{AnCaSi} for a review of scattering theory for NLS. \ligne

 The case $k=1$, in which all but one directions are trapped, is particularly interesting since it corresponds to a long-range nonlinearity\footnote{If linear solutions decay like $t^{-k/2}$, the nonlinearity $|u|^{p-1}u$ is short-range if $p>1+\frac{2}{k}$ and long-range otherwise.}. We will be interested in this case in which the equation can be written as
 \begin{equation} \label{CNLS} 
  \left\{
      \begin{aligned}
      &(i\partial_t + \DD) U=\kappa_0\vert U\vert^2U, \quad (t,x,y)\in \R\times \R\times \R^{d},
       \\  &  U(0,x,y)  =U_0(x,y),
      \end{aligned}
    \right.
\end{equation}
where we have set
\begin{equation*}
\DD=-\partial^2_x+\mathcal{H}_{d}.
\end{equation*}
Since the nonlinearity decays at best like $t^{-1}$ in $L^2$, a modification of the scattering operator might be needed to describe the asymptotic dynamics. This is indeed the case and the modification will be given by the {\it resonant system} of this equation. Actually, such a phenomenon has been put to evidence by the first author, Pausader, Tzvetkov and Visciglia\;\cite{HPTV} for the cubic nonlinear Schr\"odinger on $\R\times \T^{d}$. Here we follow their general strategy, which is also described in~\cite{HPTVproc}.

In order to define the limit system, we start by recalling that if $v$ solves
\begin{equation*} 
(i\partial_t +\HH_d)v=\kappa_0 |v|^2v\qquad (t, y) \in \R\times \R^d
\end{equation*}
and if we look at the {\it profile} $f(t)=e^{-it\HH_d} v$, then it satisfies the equation 
$$
i\partial_t f=\kappa_0 e^{-it\HH_d}(|e^{it\HH_d} f|^2 e^{it\HH_d} f)=\kappa_0\sum_{n_{1},n_{2}, n_3, n \in \N}e^{it\omega}\Pi_n\left(\Pi_{n_{1}}f \overline{\Pi_{n_{2}}f} \Pi_{n_3}f\right)
$$
where $\N$ is the set of non-negative integers, $\Pi_n$ is the projection onto the  $n-$th eigenspace of $\HH_d$, and $\omega=2(n_{1}-n_{2}+n_3-n)$. The resonant system associated to the previous equation is obtained by restricting the above sum to the {\it resonant interactions} corresponding to $\omega=0$. This is given explicitly by 
\begin{equation}\label{RS}
\begin{split}
i\partial_t g(t,y)=&\kappa_0 \mathcal T[g, g, g](y)\\
\mathcal T[f,g,h]:=&\sum_{\substack{n_{1},n_{2}, n_3, n \in \N\\ n_{1}+n_3=n_{2}+n}}  \Pi_n\left(\Pi_{n_{1}}f \overline{\Pi_{n_{2}}g} \Pi_{n_3}h\right)=\sum_{n_{1},n_{2}, n_3 \in \N     }\Pi_{n_{1}+n_3-n_{2}}\left(f_{n_{1}}\ov{g_{n_{2}}}h_{n_3}\right),
\end{split}
\end{equation}
where we will often denote $w_n(y)=\Pi_n w(y)$ for an arbitrary function $w$ on $\R^d$. When $d=2$, we will see that this system is none other than the (CR) equation derived in~\cite{FGH} (see Section~\ref{sect33} for more details). \medskip

We are now ready to describe the asymptotic dynamics of~\eqref{CNLS}. This will be given by what we shall call the {\it limit system} defined by
 \begin{equation} \label{RSS} 
 \begin{split}
&  \left\{
      \begin{aligned}
      &i\partial_\tau G(\tau)=\kappa_0\mathcal R[G(\tau), G(\tau), G(\tau)], \quad (\tau,x,y)\in \R\times \R\times \R^{d},
       \\  &  G(0,x,y)  =G_0(x,y),
      \end{aligned}
    \right.\\[3pt]
  &  \text{where}\qquad\mathcal F_{x}\mathcal R[G, G, G](\xi,\cdot)=\mathcal T [\widehat G(\xi, \cdot),\widehat G(\xi, \cdot),\widehat G(\xi, \cdot)].
    \end{split}
\end{equation}
Here, we denoted $\widehat G(\xi,y)$ the partial Fourier transform in the first (un-trapped) variable. Noting that the dependence on $\xi$ is merely parametric, the above system is essentially~\eqref{RS}.\medskip

In the sequel, for $N\geq 1$, $H^{N}(\R\times \R^{d})$ is a Sobolev space and $S=S_N$, $S^+=S_N^+$ denote Banach spaces whose norms are defined in~\eqref{DefSNorm} later. They are contained in $H^N$ and include all the Schwartz functions.

\begin{theorem}\label{ModScatThm}
Let $1\leq d \leq 4$ and $N\geq8$. There exists $\varepsilon=\varepsilon(N,d)>0$ such that if $U_0\in S^+$ satisfies
\begin{equation*}
\Vert U_0\Vert_{S^+}\le\varepsilon,
\end{equation*}
and if $U(t)$ solves~\eqref{CNLS} with initial data $U_0$, then $U\in \mathcal{C}([0,+\infty); H^N(\R\times \R^d))$ exists globally and exhibits modified scattering to its resonant dynamics~\eqref{RSS} in the following sense: there exists $G_0\in S$ such that if $G(t)$ is the solution of~\eqref{RSS} with initial data $G(0)=G_0$, then
\begin{equation*}
\Vert U(t)-e^{it\DD}G(\pi\ln t)\Vert_{H^N(\R\times \R^d)}\longrightarrow 0\quad\hbox{ as }\;\;\;t\longrightarrow+\infty.
\end{equation*}
\end{theorem}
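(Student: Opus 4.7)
I would follow the framework developed in~\cite{HPTV}. First pass to the profile $F(t):=e^{-it\DD}U(t)$, which satisfies
\beaa
i\partial_t F=\kappa_0\, e^{-it\DD}\bigl(\vert e^{it\DD}F\vert^{2}e^{it\DD}F\bigr).
\eeaa
Decomposing $F(t,x,y)=\sum_{n\in\N}F_n(t,x)\varphi_n(y)$ in an eigenbasis of $\HH_d$ and taking the partial Fourier transform in $x$, a direct computation rewrites the equation as
\beaa
i\partial_t \widehat F_n(\xi)=\kappa_0\sum_{n_1,n_2,n_3}e^{it\omega}\,K^{n}_{n_1n_2n_3}\,I_t[\widehat F_{n_1},\widehat F_{n_2},\widehat F_{n_3}](\xi),
\eeaa
where $\omega=2(n_1-n_2+n_3-n)$ is the harmonic resonance phase, $K^{n}_{n_1n_2n_3}$ is the Hermite structure constant coming from $\Pi_n(\varphi_{n_1}\varphi_{n_2}\varphi_{n_3})$, and
\beaa
I_t[f,g,h](\xi):=\int_{\R^2}e^{2it\eta\sigma}f(\xi-\eta)\,\overline{g(\xi-\eta-\sigma)}\,h(\xi-\sigma)\,d\eta\,d\sigma.
\eeaa

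The heart of the argument is a stationary-phase analysis of $I_t$ about its unique critical point $\eta=\sigma=0$: since the phase $2\eta\sigma$ has signature $0$ and $\vert\det\vert=4$, one obtains
\beaa
I_t[f,g,h](\xi)=\frac{\pi}{t}\,f(\xi)\,\overline{g(\xi)}\,h(\xi)+O(t^{-1-\delta}).
\eeaa
Combined with the harmonic selection rule, the non-resonant terms $\omega\neq 0$ are then removed by integration by parts in time (Poincar\'e--Dulac normal form), leaving an effective equation
\beaa
i\partial_t F=\frac{\kappa_0\pi}{t}\mathcal R[F,F,F]+\text{error},
\eeaa
which, under the rescaling $\tau=\pi\ln t$, is exactly~\eqref{RSS}.

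The rigorous implementation is a bootstrap in the two norms $S=S_N$ and $S^+=S_N^+$, with assumptions
\beaa
\Vert F(t)\Vert_{S}\leq 2\varepsilon,\qquad \Vert F(t)\Vert_{S^+}\leq 2\varepsilon(1+t)^{\delta},
\eeaa
for small $\delta,\varepsilon>0$. The $S^+$-norm absorbs the mild polynomial growth coming from the stationary-phase remainder, whereas the uniform $S$-control must use the resonant dynamics together with the mass-type conservation law for $\mathcal R$. Once these bounds are closed, the asymptotic profile $G_0$ is constructed by showing that the resonantly modulated profile is Cauchy in $S$ as $t\to\infty$; solving~\eqref{RSS} with this limit as data yields $G_0\in S$ and $\Vert F(t)-G(\pi\ln t)\Vert_{H^N}\to 0$, which translates to the conclusion of the theorem via the isometry $e^{it\DD}$ on $H^N$.

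The main obstacle, and the principal departure from~\cite{HPTV} where the product structure of characters on $\T^d$ reduced the multilinear combinatorics to algebra, is to establish sharp trilinear bounds on $\mathcal T$ in the Hermite basis of $\R^d$ together with the quantitative decay estimate
\beaa
\Bigl\Vert e^{-it\DD}\bigl(\vert e^{it\DD}F\vert^{2}e^{it\DD}F\bigr)-\frac{\pi}{t}\mathcal R[F,F,F]\Bigr\Vert_{S}\lesssim t^{-1-\delta}\Vert F\Vert_{S^+}^{3}.
\eeaa
These rely on nontrivial control of the structure constants $K^{n}_{n_1n_2n_3}$ and of triple products of Hermite functions in $L^p(\R^d)$; the hypotheses $N\geq 8$ and $d\leq 4$ reflect the thresholds at which such multilinear estimates are still available and the bootstrap closes. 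A secondary but essential ingredient is small-data global well-posedness of~\eqref{RSS} in $S$, needed so that $G(\pi\ln t)$ is well-defined for all $t\geq 1$.
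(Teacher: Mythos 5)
Your overall strategy is the right one — pass to the profile $F=e^{-it\DD}U$, isolate the resonant nonlinearity $\frac{\pi}{t}\mathcal R$ via normal-form/stationary-phase considerations, close a bootstrap, and construct the asymptotic profile as a Cauchy sequence along a dyadic-in-$\log t$ time grid. That is indeed the skeleton of the paper's argument (Theorem~\ref{ModScatThm2}). However there are two gaps in the way you propose to close it.

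First, your bootstrap hierarchy is too coarse. You propose a uniform bound $\Vert F(t)\Vert_{S}\leq 2\varepsilon$, but this cannot close: the limit system~\eqref{RSS} \emph{itself} fails to preserve the $S$-norm, and even its solutions grow like $(1+t)^{\delta'}\Vert G_0\Vert_{S}$ (Lemma~\ref{lem34}). What \emph{is} conserved for~\eqref{RSS} is a weaker, pointwise-in-$\xi$ quantity, namely the $Z$-norm
$\Vert F\Vert_{Z}^2=\sup_\xi (1+|\xi|^2)^2\Vert\widehat F(\xi,\cdot)\Vert^2_{\HH^1_y}$, and this conservation comes from the $\xi$-by-$\xi$ cancellation $\langle i\mathcal F_x\mathcal R[F,F,F](\xi,\cdot),\widehat F(\xi,\cdot)\rangle_{\HH^1_y}=0$. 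The paper therefore runs a three-norm scheme $Z\subset S\subset S^+$: $Z$ is uniformly bounded (non-perturbative, via the above cancellation), while $S$ and $S^+$ are allowed to grow like $t^\delta$ and $t^{5\delta}$ respectively (cf.\ the definition of $X_T$ and $X_T^+$). The mass/kinetic-energy conservation of $\mathcal T$ is what controls $Z$, not $S$; conflating the two leaves the bootstrap open.

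Second, the quantitative decay estimate you propose,
$\Vert \mathcal N^t[F,F,F]-\frac{\pi}{t}\mathcal R[F,F,F]\Vert_{S}\lesssim t^{-1-\delta}\Vert F\Vert_{S^+}^3$,
is false as a pointwise-in-time bound in the $S$-norm. The actual structure established in Proposition~\ref{StrucNon} is a splitting $\mathcal E=\mathcal E_1+\mathcal E_2$ where only $\mathcal E_1$ decays integrably, and even then in the \emph{weak} $Z$-norm, while $\mathcal E_2=\partial_t\mathcal E_3$ is merely a total time-derivative of a slowly decaying quantity ($\Vert\mathcal E_3\Vert_S\lesssim t^{-1/10}$); one only controls $\int_{T/2}^T\mathcal E_i\,dt$ in $S$ up to a $T^\delta$ loss. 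This weaker, integral-form information is exactly what the three-norm bootstrap is designed to exploit, and it is why the naive pointwise estimate — which you would need to close a two-norm bootstrap — is neither available nor needed. Finally, the "main obstacle" you flag (Hermite multilinear combinatorics) is handled in the paper not through explicit structure constants $K^n_{n_1n_2n_3}$ but through the Lens transform~\eqref{LensTransform}, which converts bilinear estimates for $e^{it\HH_d}$ into Euclidean ones, and through the Strichartz estimate~\eqref{Stri0} for $\HH_d$ — these are the key replacements for the torus-character orthogonality in~\cite{HPTV} that you would need to make precise.
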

By reversibility of the equation, a similar statement holds as $t\longrightarrow-\infty$.  \ligne

Our next result is the existence of  modified wave operators.

\begin{theorem}\label{ExMWO}
Let $1\leq d\leq 4$ and $N\geq8$. There exists $\varepsilon=\varepsilon(N,d)>0$ such that if $G_0\in S^+$ satisfies
\begin{equation*}
\Vert G_0\Vert_{S^+}\le\varepsilon,
\end{equation*}
and $G(t)$ solves~\eqref{RSS} with initial data $G_0$, then there exists $U\in \mathcal{C}\big([0,\infty); H^N(\R\times \R^d)\big)$ which is a solution of\;\eqref{CNLS} such that
\begin{equation*}
\begin{split}
\Vert U(t)-e^{it\DD}G(\pi\ln t)\Vert_{H^N(\R\times \R^d)}\longrightarrow0\;\;\hbox{ as }\;\;t\longrightarrow+\infty.
\end{split}
\end{equation*}
Moreover 
\begin{equation*}
\Vert U(t)\Vert_{L_x^{\infty}\HH^1_y}\leq C(1+\vert t\vert)^{-1/2}.
\end{equation*}
\end{theorem}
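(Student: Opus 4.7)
The plan is to construct $U$ by solving backward from infinity the equation for the profile $F(t):=e^{-it\DD}U(t)$, imposing the asymptotic prescription $F(t)\sim G(\pi\ln t)$. Setting $\NN(F):=e^{-it\DD}\bigl(|e^{it\DD}F|^{2}e^{it\DD}F\bigr)$, the profile equation reads
$$i\partial_t F=\kappa_0\NN(F).$$
Expanding $e^{it\DD}F$ in the eigenbasis of $\HH_d$ and applying the Fourier transform in the un-trapped variable $x$, one splits $\NN(F)$ into a resonant part and a non-resonant remainder. A stationary phase analysis in the continuous $x$-frequency, analogous to the one carried out in \cite{HPTV} for $\R\times\T^{d}$, identifies the resonant contribution with $\tfrac{\pi}{t}\RR[F,F,F]$ up to terms of better time-decay; the factor $\tfrac{\pi}{t}$ is precisely what drives the logarithmic rescaling $\tau=\pi\ln t$ in the statement.

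Writing $F(t)=G(\pi\ln t)+R(t)$ and subtracting \eqref{RSS}, the remainder $R$ satisfies an equation of the schematic form
$$i\partial_t R=\frac{\pi\kappa_0}{t}\bigl(\RR[G+R,G+R,G+R]-\RR[G,G,G]\bigr)+\kappa_0\,\NN_{\mathrm{osc}}(G+R),$$
where $\NN_{\mathrm{osc}}$ is the non-resonant part of $\NN$. For $T$ large, I would set up a contraction on a ball
$$\mathcal{B}_T=\bigl\{R\in\mathcal{C}([T,+\infty);H^{N}):\Vert R\Vert_{X_T}\leq\delta\bigr\}$$
inside a Banach space $X_T$ combining the $S=S_N$ norm with suitable weights in time, and define the fixed-point map
$$\Phi(R)(t)=i\int_{t}^{+\infty}\bigl(\text{right-hand side above at time }s\bigr)\,ds,$$
which encodes the boundary condition $R(+\infty)=0$.

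Three families of estimates are needed to close the contraction. First, the limit system \eqref{RSS} is globally well-posed for small data in $S^{+}$ and $\Vert G(\tau)\Vert_{S}$ stays uniformly bounded in $\tau$, relying on the same trilinear bounds for $\TT$ that underlie Theorem \ref{ModScatThm}. Second, trilinear estimates for $\RR$ render the first bracket on the right-hand side of the remainder equation trilinear in $(G,R)$ with a decisive $t^{-1}$ factor in front. Third, a normal-form integration by parts against the oscillatory phase $e^{2it(n_{1}-n_{2}+n_{3}-n)}$ converts $\NN_{\mathrm{osc}}(G+R)$ into boundary terms decaying like $t^{-1-\eta}$ plus a quintic remainder; here one must handle carefully the divisor $2(n_{1}-n_{2}+n_{3}-n)\neq 0$, which is the main source of technical difficulty. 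Combining these ingredients yields
$$\Vert\Phi(R)\Vert_{X_T}\lesssim T^{-\eta}\bigl(\Vert G\Vert_{S}^{3}+\Vert G\Vert_{S}^{2}\Vert R\Vert_{X_T}+\Vert R\Vert_{X_T}^{3}\bigr),$$
together with an analogous Lipschitz bound for $\Phi(R_1)-\Phi(R_2)$, so that $\Phi$ is a contraction on $\mathcal{B}_T$ for $T$ large, producing a unique $R$ with $\Vert R(t)\Vert_{H^{N}}\to 0$ as $t\to+\infty$. Extending the resulting solution back to $[0,T]$ is then immediate by the local Cauchy theory for \eqref{CNLS} in $H^{N}$.

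The pointwise decay $\Vert U(t)\Vert_{L_x^{\infty}\HH^{1}_y}\leq C(1+\vert t\vert)^{-1/2}$ follows from the $t^{-1/2}$ dispersive decay of the free flow $e^{it\DD}$ in the one un-trapped variable, applied to $U(t)=e^{it\DD}\bigl(G(\pi\ln t)+R(t)\bigr)$, once both $G$ and $R$ are controlled in a $Z$-type norm (with weights in $x$) for which this dispersive estimate is available; this is precisely why the hypothesis is placed on $S^{+}$ rather than $S$. The true obstacle throughout the argument is the joint design of the norms $S$ and $S^{+}$: they must simultaneously support the dispersive estimate, close the trilinear and normal-form bounds, and be propagated at most polynomially in time both by \eqref{CNLS} and by the limit system \eqref{RSS}. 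Following the blueprint of \cite{HPTV}, this forces $N\geq 8$ and restricts $d$ to $1\leq d\leq 4$, and the delicate balancing of the weights constitutes the heart of the proof.
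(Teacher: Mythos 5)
Your proposal follows essentially the same route as the paper: set $F(t)=e^{-it\DD}U(t)-G(\pi\ln t)$ (your $R$), build the fixed-point map $\Phi(F)(t)=i\int_t^\infty\{\mathcal{N}^\sigma[F+G,F+G,F+G]-\tfrac{\pi}{\sigma}\mathcal{R}[G,G,G]\}\,d\sigma$, and close a contraction in time-weighted $S$-norms using the structural decomposition $\mathcal{N}^t=\tfrac{\pi}{t}\mathcal{R}+\mathcal{E}^t$ (Proposition~\ref{StrucNon}), bounds on the limit system (Lemma~\ref{lem34}), and the $t^{-1/2}$ decay of $e^{it\DD}$ in $L^\infty_x\HH^1_y$. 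One small mischaracterization: since the non-resonant frequencies satisfy $\omega\in 2\Z\setminus\{0\}$, the divisor $\omega$ is uniformly bounded away from zero and is not the genuine obstruction; the real technical difficulty, as the paper emphasizes, arises when the time derivative falls on the other phase factor $e^{2it\eta\kappa}$ in the regime where $\eta\kappa$ is not small, which is handled via spatial integration by parts and refined 1D bilinear Strichartz estimates (Lemmas~\ref{1dBE}, \ref{ddBE}) rather than a small-divisor argument.
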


As a result, any dynamics of the limit system~\eqref{RSS} with data in $S^{+}$, gives information on the  asymptotic behaviour of~\eqref{CNLS}. We point out that the condition $N\geq 8$ is not optimal in the previous results.

\subsection{Physical interest and particular dynamics in dimension \texorpdfstring{$\boldsymbol {D=3}$}{D=3}}\label{sect13}  We are now able to precise the results of the first paragraph. The key point is that when $D=3$, the corresponding resonant system~\eqref{RS} is the so-called (CR) equation derived and studied in~\cite{FGH, GHT1, GHT2}. This (CR) system-standing for {\it Continuous Resonant} (cf. equation~\eqref{star})- was first obtained as the large box (or continuous) limit of the resonant system for the {\it homogeneous} cubic NLS equation on a box of size $L$, and was shown to approximate the dynamics of the cubic NLS equation over very long nonlinear time scales. It can be also understood as the equation of the high-frequency envelopes of NLS on the unit torus $\T^2$. The rather surprising fact that it is also the resonant system~\eqref{RS} of NLS with harmonic trapping is one consequence of its rich properties and behavior (cf. Section~\ref{sect33}). As a consequence, one can use Theorem~\ref{ExMWO} to construct other interesting non-scattering dynamics for equation~\eqref{CNLS} as is illustrated in the following result.

\begin{corollary} \label{coro}
Let $d=2$. We let $F(t)=e^{-it\DD} U(t)$ denote the profile of $U$, then   
\begin{enumerate}
\item No nontrivial scattering: Assume that $U$ solves~\eqref{CNLS} and that $F(t)$ converges as ${t\to +\infty}$. If $U(0)$ is sufficiently small, then $U\equiv 0$.
\item Quasi-periodic frequency dynamics:
There exists a global solution $U(t)$ such that $\widehat F(e^{t},\xi)$ is asymptotically quasi-periodic with two distinct frequencies for all $\xi \in [-1,1]$.

\end{enumerate}
\end{corollary}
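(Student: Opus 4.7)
The proof plan is to derive both parts from Theorems~\ref{ModScatThm}--\ref{ExMWO} combined with rigidity and dynamical properties of the limit system~\eqref{RSS}.

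\textbf{(1) Rigidity of convergent profiles.} Apply Theorem~\ref{ModScatThm} to the small datum $U_{0}$ to produce $G_{0}\in S$ and the corresponding solution $G(\tau)$ of~\eqref{RSS} satisfying $\Vert F(t) - G(\pi\ln t)\Vert_{H^{N}} \to 0$. The hypothesis $F(t) \to F_{\infty}$ then forces $G(\tau) \to F_{\infty}$ in $H^{N}$ as $\tau \to +\infty$. Continuity of the cubic form $\mathcal R$ on $H^{N}$ and the Duhamel identity
\begin{equation*}
G(\tau_{2}) - G(\tau_{1}) = -i\kappa_{0}\int_{\tau_{1}}^{\tau_{2}}\mathcal R[G(s), G(s), G(s)]\,ds,
\end{equation*}
applied with $\tau_{1}, \tau_{2} \to \infty$ (so that the left-hand side tends to $0$), preclude $\mathcal R[F_{\infty}, F_{\infty}, F_{\infty}] \neq 0$; hence $\mathcal R[F_{\infty}, F_{\infty}, F_{\infty}] = 0$, which in the Fourier picture reads $\mathcal T[\widehat F_{\infty}(\xi, \cdot), \widehat F_{\infty}(\xi, \cdot), \widehat F_{\infty}(\xi, \cdot)] = 0$ for almost every $\xi$. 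The decisive step is then a Pohozaev-type identity: expanding $|e^{it\HH_{d}}h|^{4}$ in Hermite modes of $\HH_{d}$ and recognising $n_{1}-n_{2}+n_{3}-n = 0$ as the surviving resonance after time-averaging,
\begin{equation*}
\langle \mathcal T[h,h,h], h\rangle_{L^{2}(\R^{d})} = \lim_{T\to\infty}\frac{1}{T}\int_{0}^{T} \big\Vert e^{it\HH_{d}}h\big\Vert_{L^{4}(\R^{d})}^{4}\,dt \geq 0,
\end{equation*}
with equality if and only if $h\equiv 0$. Pairing the pointwise identity with $\widehat F_{\infty}(\xi, \cdot)$ thus yields $\widehat F_{\infty}(\xi, \cdot) = 0$ a.e.~$\xi$, so $F_{\infty} = 0$. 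Conservation of $\Vert G\Vert_{L^{2}}$ under~\eqref{RSS} forces $G_{0} = 0$ and hence $G\equiv 0$, so $\Vert U(t)\Vert_{H^{N}} \to 0$; combining this with conservation of $\Vert U\Vert_{L^{2}}$ for~\eqref{CNLS} yields $U \equiv 0$.

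\textbf{(2) Two-frequency orbits.} By~\cite{FGH, GHT1, GHT2}, the (CR) equation~\eqref{RS} on $\R^{2}$ admits arbitrarily small smooth solutions $g(\tau, y)$ quasi-periodic in $\tau$ with two distinct frequencies $(\omega_{1}, \omega_{2})$. Fix $\chi \in C_{c}^{\infty}(\R)$ with $\chi \equiv 1$ on $[-1, 1]$ and prescribe a final state $G_{0}$ through $\widehat G_{0}(\xi, y) = \chi(\xi)\, g(0, y)$, where $\widehat G$ is the partial Fourier transform in $x$. The cubic scaling invariance $g(\tau,y) \mapsto \lambda g(|\lambda|^{2}\tau, y)$ of~\eqref{RS}, applied pointwise in $\xi$ with $\lambda = \chi(\xi)$, gives the explicit formula
\begin{equation*}
\widehat G(\tau, \xi, y) = \chi(\xi)\, g\big(|\chi(\xi)|^{2}\tau, y\big)
\end{equation*}
for the solution of~\eqref{RSS} with initial datum $G_{0}$; in particular, $\widehat G(\tau, \xi, \cdot) = g(\tau, \cdot)$ for every $\xi \in [-1, 1]$. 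Rescaling $g(0)$ if necessary so that $\Vert G_{0}\Vert_{S^{+}} \leq \varepsilon$, Theorem~\ref{ExMWO} produces a solution $U$ of~\eqref{CNLS} with $\Vert F(t) - G(\pi\ln t)\Vert_{H^{N}} \to 0$; evaluating at $t \mapsto e^{t}$ yields $\widehat F(e^{t}, \xi) - g(\pi t, \cdot) \to 0$ for $\xi \in [-1, 1]$, so $\widehat F(e^{t}, \xi)$ is asymptotically quasi-periodic in $t$ with the two distinct frequencies $(\pi\omega_{1}, \pi\omega_{2})$.

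The central technical point is the rigidity in Part~(1): converting the assumed convergence of $F(t)$ into a pointwise-in-$\xi$ vanishing of the resonant nonlinearity, and then into the vanishing of $F_{\infty}$ itself via the positive-definiteness of the long-time $L^{4}$-average of the linear flow $e^{it\HH_{d}}$. Part~(2) is then a fairly direct transfer through Theorem~\ref{ExMWO} of the known multi-frequency dynamics of (CR), the explicit scaling invariance ensuring smallness of $G_{0}$ without destroying the quasi-periodicity for $|\xi|\leq 1$.
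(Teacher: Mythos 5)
Your proof is correct and follows essentially the same route as the paper: part~(1) rests on the positive-definiteness of the quartic Hamiltonian $\langle\mathcal T[h,h,h],h\rangle=\frac{2}{\pi}\|e^{i\lambda\HH}h\|_{L^4_{\lambda,y}}^4$ (the paper extracts $Q[g]\equiv0$ via conservation of $E(g)$ combined with $\|\partial_\tau g\|\to 0$, whereas you reach $\mathcal R[F_\infty,F_\infty,F_\infty]=0$ through the Duhamel identity and then pair with $\widehat F_\infty$, but the decisive ingredient is identical), and part~(2) is exactly the transfer mechanism of Section~\ref{SpeDyn} applied to the explicit two-frequency orbits on the eigenspace $E_1$ listed in property~\eqref{pt5}, followed by Theorem~\ref{ExMWO}.
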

We refer to Section~\ref{sect33} for more details and basic properties of the (CR) equation. An interesting open question is to decide whether growth of Sobolev norms can occur in the (CR) equation as was the case for the resonant system on the torus $\T^d$ observed in~\cite{Hani}. This would in turn imply the same behavior for~\eqref{CNLS} (along with a proof of arbitrary large but finite growth of Sobolev norms for the equation with full trapping\footnote{Indeed,~\cite[Theorem 2.6]{FGH} also implies that such a result would also give another proof of the growth result for the cubic NLS equation on $\T^2$ first obtained in~\cite{CKSTTTor}.}). \medskip

 Let us now justify the approximation~\eqref{approx}.   For a solution $U$ of~\eqref{CNLS}, define $V=e^{-it\HH_d}U$ and write the decomposition $V=\sum_{p\in \N}V_p$ according to the eigenspaces of $\HH_d$. Then the equations satisfied by $V_p(t,x,y)$ (with $y=(y_1,y_2)$) are given by
$$
(i\partial_t V_p-\partial^2_{x} V_p)= \kappa_0\mathcal{T} (V, V, V)_p +\mathcal E_p(t)=\kappa_0\sum_{p_{1}-p_{2}+p_3=p}\Pi_p \left(V_{p_{1}}\overline{V_{p_{2}}}V_{p_3}\right)+\mathcal E_p(t),
$$
and we show in the proof that $\mathcal E_p(t)$ is integrable in time. So at a heuristic level, the approximation consists of dropping $\mathcal E$ so that the large-time effective equation for $V(t,x,y)$ is thus given by
\begin{equation}\label{model}
(i\partial_t -\partial^2_{x})V= \kappa_0\mathcal{T} \left(V(t, x, \cdot), V(t,x, \cdot), V(t,x, \cdot)\right).
\end{equation}
Recall that $g_{n}(y):=(y_{1}+i y_{2})^ne^{-\frac{\vert y \vert^2}{2}}$ is an eigenfunction of $\HH_{2}$ and assume that the initial data at $t=0$ is given by $V(0,x,y)=\psi(x) (y_{1}+i y_{2})^ne^{-\frac{\vert y \vert^2}{2}}$ (this means that $V_p(0)=0$ for $p\neq n$). In the language of Bose-Einstein condensates such data correspond to {\it vortices} of degree $n$ (see~\cite{KFC}).   Then by~\cite{GHT1}, it holds that
\begin{equation}\label{gn}
\mathcal{T}\big(g_{n},g_{n},g_{n}\big)=\lambda_n g_{n},
\end{equation}
with\footnote{The constant $\lambda_{n}$ is merely computed by taking the scalar product with $g_{n}$ in the definition~\eqref{RS}.} 
$$\lambda_n:=\frac{\Vert  g_{n}\Vert^4_{L^4(\R^2)}}{\Vert  g_{n}\Vert^2_{L^2(\R^2)}}=\frac{(2n)!}{2^{2n+1}n!}.$$
 Hence we get that the solution of~\eqref{model} stays in the form 
\begin{equation}\label{quasi}
V(t,x,y)=\psi(t,x) (y_{1}+i y_{2})^ne^{-\frac{\vert y \vert^2}{2}},
\end{equation}
where $\psi$ satisfies the 1D NLS equation
\begin{equation}\label{1dNLS1}
(i\partial_t -\partial^2_{x}) \psi(t,x)=\kappa_0\lambda_n |\psi(t,x)|^2\psi(t,x).
\end{equation}

Two applications of this ``quasi-1D dynamics" in the physics literature is to give theoretical explanations of the all-important {\it bright and dark solitons} observed in cigar-shaped harmonic traps. Bright solitons correspond to the usual soliton solutions of~\eqref{1dNLS1} in the focusing case ($\kappa_0=1$). More importantly, dark solitons correspond to the so-called dark soliton solutions of\;\eqref{1dNLS1}: these are stationary solutions of the defocusing problem (obtained from the complete integrability theory) that exhibit an ``amplitude dip" form a constant amplitude of 1 at $\pm \infty$ to~0 near the origin. For more details, we refer to~\cite{Frantz}.

\medskip 

The decay in time of solutions is very important for our proof to work. This means that we cannot handle 1D- soliton solutions (for the focusing case) or dark-soliton data (which does not even decay at spatial infinity.) But for sufficiently small decaying data, one can go one step further than~\eqref{1dNLS1} in the asymptotic analysis of\;\eqref{CNLS}  (see~\cite{HaNa, Ca, LinSof, KaPu, IfTa} for related works on NLS): the effective large-time dynamics of $\psi(t,x)$ is actually given by
 \begin{equation*}  
(i\partial_t -\partial^2_{x}) \psi(t,x)=  \frac{\pi \kappa_0\lambda_n}{t}   \mathcal{F}_{\xi \to x}^{-1}\big(\vert \wh\psi(t,\xi) \vert^2\wh\psi(t,\xi)\big), \quad (t,x)\in \R\times \R,
\end{equation*}
which is solved by 
\begin{equation}\label{psi1}
\psi(t,x) = \mathcal{F}_{\xi\to x}^{-1}\Big( \wh{\varphi}(\xi) e^{i  \pi \kappa_{0} \lambda_n  \vert \wh\varphi(\xi)\vert^2 \ln t} e^{i(1-t)\xi^2}\Big) ,
\end{equation}
where $\varphi(x)=\psi(1,x)$. This will come as a consequence of our main Theorems~\ref{ModScatThm} and~\ref{ExMWO}.

Let us sum up in the  next proposition some asymptotic results of~\eqref{CNLS}, concerning quasi-1D dynamics. The case $n=0$ implies the result of Theorem~\ref{thm0}.

\begin{proposition}\label{prop15} Let $N\geq 8$. Let $\varphi\in \mathcal{S}(\R)$ and set $G_{0}(x,y)=\varphi(x) (y_{1}+i y_{2})^ne^{-\frac{\vert y \vert^2}{2}}$.
\begin{enumerate}
\item Consider the solution $\psi$ of~\eqref{1dNLS1}. Then if   $\|G_{0}\|_{S^{+}}\leq \eps$ is small enough, there exists a solution $U\in \mathcal{C}\big([0,\infty); H^N(\R\times \R^2)\big)$ of\;\eqref{CNLS} such that 
\begin{equation*}
\big\Vert U(t,x,y)-\psi(t,x)  e^{2i(n+1)t} (y_{1}+i y_{2})^ne^{-\frac{\vert y \vert^2}{2}}\big\Vert_{H^N(\R\times \R^2)}\longrightarrow 0\quad\hbox{ as }\;\;\;t\longrightarrow+\infty.
\end{equation*}
\item Consider the solution $\psi$ of~\eqref{psi1}. Then if   $\|G_{0}\|_{S^{+}}\leq \eps$ is small enough, there exists a solution $U\in \mathcal{C}\big([0,\infty); H^N(\R\times \R^2)\big)$ of\;\eqref{CNLS} such that 
\begin{equation*}
\big\Vert U(t,x,y)-\psi(t,x)  e^{2i(n+1)t} (y_{1}+i y_{2})^ne^{-\frac{\vert y \vert^2}{2}}\big\Vert_{H^N(\R\times \R^2)}\longrightarrow 0\quad\hbox{ as }\;\;\;t\longrightarrow+\infty.
\end{equation*}
\end{enumerate}
Moreover, in both of the previous cases, $U$ satisfies for all $\theta\in \R$, $U(t,x,R_\theta y)=e^{in\theta} U(t,x,y)$, where we have set $R_{\theta}y=e^{i\theta}y$ with the identification $y\equiv y_1+iy_2$.
\end{proposition}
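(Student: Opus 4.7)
The plan is to apply Theorem~\ref{ExMWO} to the separated final data $G_0(x,y) = \varphi(x)g_n(y)$. Since $g_n\in\mathcal S(\R^2)$, the condition $\|G_0\|_{S^+}\le\varepsilon$ will follow from a smallness bound on a Schwartz seminorm of $\varphi$. The first step is to solve the limit system~\eqref{RSS} with data $G_0$ via the separated ansatz $G(\tau,x,y) = A(\tau,x)g_n(y)$. The eigenrelation~\eqref{gn} and the definition of $\mathcal R$ give $\mathcal R[G,G,G](\tau,x,y) = \lambda_n\mathcal F_{\xi\to x}^{-1}(|\hat A(\tau,\cdot)|^2\hat A(\tau,\cdot))(x)\, g_n(y)$, so the ansatz is preserved and, pointwise in $\xi$,
\[
i\partial_\tau \hat A(\tau,\xi) = \kappa_0\lambda_n|\hat A(\tau,\xi)|^2\hat A(\tau,\xi),
\]
explicitly solved by $\hat A(\tau,\xi) = \hat\varphi(\xi)\exp(-i\kappa_0\lambda_n|\hat\varphi(\xi)|^2\tau)$. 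Uniqueness for~\eqref{RSS} in $S^+$ identifies $A(\tau)g_n$ as \emph{the} solution of~\eqref{RSS} emanating from $G_0$.

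Next, I would compute $e^{it\mathcal D}G(\pi\ln t)$ explicitly. Since $\mathcal D = -\partial_x^2 + \mathcal H_2$ and a direct check gives $\mathcal H_2 g_n = 2(n+1)g_n$ (using that $g_n(y) = z^n e^{-|y|^2/2}$ is holomorphic in $z = y_1 + iy_2$), the linear evolution factorizes as
\[
e^{it\mathcal D}G(\pi\ln t)(x,y) = e^{2i(n+1)t}\bigl[e^{-it\partial_x^2}A(\pi\ln t,\cdot)\bigr](x)\, g_n(y).
\]
A Fourier-side calculation with the explicit formula for $\hat A$ then identifies the $x$-factor with the function $\psi$ of~\eqref{psi1}, proving part~(2) directly from Theorem~\ref{ExMWO}. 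For part~(1), the extra ingredient is the classical modified-scattering theory for the 1D cubic NLS (Hayashi--Naumkin, Ozawa): for the small Schwartz datum $\varphi$ there exists a unique solution $\psi$ of~\eqref{1dNLS1} satisfying $\|\psi(t) - \tilde\psi(t)\|_{H^N}\to 0$, where $\tilde\psi$ is the explicit formula~\eqref{psi1} built from $\varphi$. Combining this with part~(2) via the triangle inequality yields part~(1).

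Finally, the axisymmetry follows by uniqueness. The datum $G_0 = \varphi g_n$ satisfies $G_0(x,R_\theta y) = e^{in\theta} G_0(x,y)$, and both~\eqref{CNLS} and~\eqref{RSS} commute with rotations of $y$, so the fixed-point iteration underlying Theorem~\ref{ExMWO} (which is contractive in a suitable $S^+$-ball) produces $U$ and $G$ with the same rotational covariance. The main technical work lies in Step~2's phase bookkeeping to match~\eqref{psi1} exactly, and in invoking 1D modified scattering at the high-regularity $H^N$ level used here; both are standard applications once the eigenrelation~\eqref{gn} and the ansatz reduction in Step~1 are in hand.
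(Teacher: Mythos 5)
Part (2) and the symmetry statement are handled essentially as in the paper: the separated ansatz $G(\tau,x,y)=A(\tau,x)g_n(y)$ is invariant under~\eqref{RSS} thanks to the eigenrelation~\eqref{gn}, the resulting ODE for $\hat A$ is integrated explicitly, and the linear flow factorizes via $\HH_2 g_n = 2(n+1)g_n$, after which Theorem~\ref{ExMWO} (in the guise of Theorem~\ref{thm51}) finishes the job. The rotational covariance argument via the $S^+$-contraction is also fine, since the nonlinearity and all the $S$, $S^+$ norms are invariant under $y\mapsto R_\theta y$.

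For part (1), however, you take a genuinely different route, and it has a gap. You deduce (1) from (2) plus an external input: one-dimensional long-range scattering for the cubic NLS (Hayashi--Naumkin/Ozawa) to replace the explicit profile $\tilde\psi$ of~\eqref{psi1} by an honest solution $\psi$ of~\eqref{1dNLS1}. Two problems arise. First, the cited 1D results give convergence of the profile in $L^\infty$ or a weighted $L^2$ sense, not convergence $\|\psi(t)-\tilde\psi(t)\|_{H^N(\R)}\to 0$ at the high regularity $N\geq 8$ used here; upgrading this requires a nontrivial $H^N$ version of the 1D modified wave operator. Second, the solution $\psi$ so constructed is the one with prescribed \emph{scattering} profile $\varphi$, hence $\psi(0)\neq\varphi$ in general, while the paper's statement (and the smallness hypothesis $\|G_0\|_{S^+}\leq\eps$, with $G_0=\varphi g_n$) is pinned to the Cauchy data $\psi(0)=\varphi$; matching the two requires inverting the 1D modified wave operator and tracking smallness. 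The paper avoids both issues entirely. It proves and invokes Theorem~\ref{thm52}, a modified wave operator \emph{around the full resonant nonlinearity} $\mathcal N_0^t$ rather than around the stationary-phase reduction $\tfrac{\pi}{t}\mathcal R$. Since $\mathcal N_0^t[W,W,W]=\lambda_n g_n\,\mathcal I^t[f,f,f]$ for the separated ansatz $W(t)=f(t,x)g_n(y)$ with $f=e^{it\partial_x^2}\psi$, the equation $i\partial_t W=\mathcal N_0^t[W,W,W]$ is \emph{exactly} equivalent to~\eqref{1dNLS1} for $\psi$. No 1D modified scattering is needed, no change of data, and the same smallness assumption applies directly. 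If you want to follow your route you must supply the $H^N$ 1D input and reconcile the data; the shorter path is to use Theorem~\ref{thm52} as in the paper.
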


There also exist solutions of~\eqref{CNLS} which decompose into a sum of two functions of the previous type. This will be the case of the {\it vortex dipole} in which the solution is a superposition of a positive vortex $(y_{1}+i y_{2})e^{-\frac{\vert y \vert^2}{2}}$ and its antipode $(y_{1}-i y_{2})e^{-\frac{\vert y \vert^2}{2}}$. In this case, the dynamics also reduces to a quasi-1D one, except that it is now given by the system 
 \begin{equation} \label{syst1}
  \left\{
      \begin{aligned}
      &(i\partial_t -\partial^2_{x}) \psi_{+}(t,x)=\frac{\kappa_{0}}{4}\big( 2|\psi_{-}|^{2}+|\psi_{+}|^{2}        \big) \psi_{+}(t,x), \quad (t,x)\in \R\times \R,\\
       &(i\partial_t -\partial^2_{x}) \psi_{-}(t,x)=\frac{\kappa_{0}}{4}\big( 2|\psi_{+}|^{2}+|\psi_{-}|^{2}        \big) \psi_{-}(t,x) ,  \\
       & \psi_{+}(0,x)=\varphi_{+}(x),\quad  \psi_{-}(0,x)=\varphi_{-}(x).
      \end{aligned}
    \right.
\end{equation}
The above coupled equation is known as the XPM system and it is a useful model in nonlinear optics\footnote{We thank Panos Kevrekidis for pointing this out.}  (see for example~\cite[Chapter 9]{CSS}). We do not know if it appeared in the study of Bose-Einstein condensates prior to this work. In what follows, denote $G_{\pm}(x,y)=\varphi_{\pm}(x) (y_{1}\pm i y_{2})e^{-\frac{\vert y \vert^2}{2}}$. 


\begin{proposition}\label{prop16}
  Let $\|G_{+}\|_{S^{+}},  \|G_{-}\|_{S^{+}}\leq \eps$ and consider the solutions $\psi_{+},\psi_{-}$ of~\eqref{syst1}. Then if $\eps>0$ is small enough, there exists a solution $U\in \mathcal{C}\big([0,\infty); H^N(\R\times \R^2)\big)$ of\;\eqref{CNLS} such that 
\begin{equation*}
\big\Vert U(t,x,y)-\psi_-(t,x) e^{4it}   (y_{1}-i y_{2})e^{-\frac{\vert y \vert^2}{2}}-\psi_+(t,x) e^{4it} (y_{1}+i y_{2})e^{-\frac{\vert y \vert^2}{2}}\big\Vert_{H^N(\R\times \R^2)}\longrightarrow 0\quad\hbox{as}\;\;\;t\longrightarrow+\infty.
\end{equation*}
\end{proposition}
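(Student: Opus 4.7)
The idea is to run the proof of Proposition~\ref{prop15} on the two-dimensional invariant subspace of $\Pi_{1}L^{2}(\R^{2})$ spanned by $g_{1}^{\pm}(y):=(y_{1}\pm iy_{2})e^{-|y|^{2}/2}$, both eigenfunctions of $\mathcal{H}_{2}$ with eigenvalue $4$. The first task is to verify that $\mathcal V=\{\phi_{+}(x)g_{1}^{+}(y)+\phi_{-}(x)g_{1}^{-}(y)\}$ is preserved by the flow of~\eqref{RSS}. Setting $G=a_{+}g_{1}^{+}+a_{-}g_{1}^{-}$, direct expansion gives
\begin{equation*}
G\ov G G=a_{+}(|a_{+}|^{2}+2|a_{-}|^{2})(g_{1}^{+})^{2}g_{1}^{-}+a_{-}(2|a_{+}|^{2}+|a_{-}|^{2})g_{1}^{+}(g_{1}^{-})^{2}+a_{+}^{2}\ov{a_{-}}(g_{1}^{+})^{3}+a_{-}^{2}\ov{a_{+}}(g_{1}^{-})^{3}.
\end{equation*}
The cubes carry angular momentum $\pm 3$ and vanish under $\Pi_{1}$. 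The identities $(g_{1}^{+})^{2}g_{1}^{-}=|g_{1}^{+}|^{2}g_{1}^{+}$ and $g_{1}^{+}(g_{1}^{-})^{2}=|g_{1}^{-}|^{2}g_{1}^{-}$ combined with~\eqref{gn} at $n=1$ (so $\la_{1}=1/4$) yield $\Pi_{1}((g_{1}^{+})^{2}g_{1}^{-})=\tfrac14 g_{1}^{+}$ and $\Pi_{1}(g_{1}^{+}(g_{1}^{-})^{2})=\tfrac14 g_{1}^{-}$. A partial Fourier transform in $x$ then recasts~\eqref{RSS} restricted to $\mathcal V$ as the pointwise-in-$\xi$ diagonal system
\begin{align*}
i\pr_{\tau}\wh\phi_{+}(\tau,\xi)&=\tfrac{\ka_{0}}{4}\wh\phi_{+}\bigl(|\wh\phi_{+}|^{2}+2|\wh\phi_{-}|^{2}\bigr),\\
i\pr_{\tau}\wh\phi_{-}(\tau,\xi)&=\tfrac{\ka_{0}}{4}\wh\phi_{-}\bigl(2|\wh\phi_{+}|^{2}+|\wh\phi_{-}|^{2}\bigr),
\end{align*}
which is integrable because $|\wh\phi_{\pm}(\tau,\xi)|$ is conserved.

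Next I would choose a small datum $G_{0}=\Phi_{+}g_{1}^{+}+\Phi_{-}g_{1}^{-}\in S^{+}$, with Fourier profiles $\wh\Phi_{\pm}$ to be fixed below, and apply Theorem~\ref{ExMWO}. Writing $e^{it\DD}=e^{-it\pr^{2}_{x}}e^{it\mathcal{H}_{2}}$ and using $e^{it\mathcal{H}_{2}}g_{1}^{\pm}=e^{4it}g_{1}^{\pm}$, one obtains
\begin{equation*}
e^{it\DD}G(\pi\ln t)(x,y)=e^{4it}\bigl(\Psi_{+}(t,x)g_{1}^{+}(y)+\Psi_{-}(t,x)g_{1}^{-}(y)\bigr),\qquad \Psi_{\pm}(t,x):=e^{-it\pr^{2}_{x}}\phi_{\pm}(\pi\ln t,\cdot)(x).
\end{equation*}
A chain-rule computation based on the reduced system shows that $\Psi_{\pm}$ obey the effective \emph{modified} XPM system
\begin{equation*}
(i\pr_{t}-\pr^{2}_{x})\Psi_{\pm}=\frac{\pi\ka_{0}}{4t}\,\mathcal{F}^{-1}_{\xi\to x}\bigl[\wh\Psi_{\pm}\bigl(c_{\pm}|\wh\Psi_{+}|^{2}+c_{\mp}|\wh\Psi_{-}|^{2}\bigr)\bigr],
\end{equation*}
with $(c_{+},c_{-})=(1,2)$ for $\Psi_{+}$ and $(c_{+},c_{-})=(2,1)$ for $\Psi_{-}$. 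Theorem~\ref{ExMWO} therefore produces a global solution $U$ of~\eqref{CNLS} with $\|U(t)-e^{4it}(\Psi_{+}g_{1}^{+}+\Psi_{-}g_{1}^{-})\|_{H^{N}}\to 0$.

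The last step---and the one I expect to be the main obstacle---is the comparison between $\Psi_{\pm}$ and the exact XPM solution $\psi_{\pm}$ of~\eqref{syst1} launched from $\varphi_{\pm}$, in the $H^{N}$ norm. This amounts to a modified-scattering statement for the XPM system: since the nonlinearity is diagonal in $(\psi_{+},\psi_{-})$ and gauge-invariant, the quantities $|\wh\psi_{\pm}(t,\xi)|^{2}$ are asymptotically conserved, and a Hayashi--Naumkin-type vector-field argument---propagating the operator $J(t)=x+2it\pr_{x}$ and its higher-order analogues along~\eqref{syst1} and exploiting the $t^{-1/2}$ dispersive decay of each factor---produces scattering data
\begin{equation*}
\wh\Phi_{\pm}(\xi):=\lim_{t\to\infty}\wh\psi_{\pm}(t,\xi)\exp\!\bigl(\tfrac{i\pi\ka_{0}}{4}B_{\pm}(\xi)\ln t-it\xi^{2}\bigr),\qquad B_{\pm}(\xi)=c_{\pm}|\wh\Phi_{+}|^{2}+c_{\mp}|\wh\Phi_{-}|^{2},
\end{equation*}
and the quantitative bound $\|\psi_{\pm}(t)-\Psi_{\pm}(t)\|_{H^{N}}\to 0$ when $G_{0}$ is chosen with these Fourier profiles. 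Adapting the classical 1D-NLS modified-scattering proof to the coupled cubic XPM system in high Sobolev regularity requires some care but no genuinely new idea, since the nonlinearity is pointwise in $x$ and structurally identical to that of cubic NLS. Combining this with the approximation from Theorem~\ref{ExMWO} gives the desired $\|U(t)-e^{4it}(\psi_{+}g_{1}^{+}+\psi_{-}g_{1}^{-})\|_{H^{N}}\to 0$.
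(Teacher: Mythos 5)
Your algebra on the invariant subspace is correct (the angular-momentum bookkeeping, $\Pi_1((g_1^+)^2 g_1^-)=\tfrac14 g_1^+$, the reduced diagonal system for $\widehat\phi_\pm$), and your computation that $\Psi_\pm=e^{-it\partial_x^2}\phi_\pm(\pi\ln t)$ solves the ${\pi\kappa_0}/{4t}$-prefactored Fourier-multiplied XPM system is also right. But you have chosen to route the argument through Theorem~\ref{ExMWO}, which only compares $U$ to the \emph{logarithmic-time rescaled} dynamics, and this forces you to prove separately a full modified-scattering statement for the XPM system~\eqref{syst1}. You explicitly flag this as ``the main obstacle'' and then assert that a Hayashi--Naumkin-type argument handles it ``with some care but no genuinely new idea.'' That is a genuine gap: not only must you prove long-range modified scattering for the coupled cubic system in $H^N$, you must also verify that the scattering profiles $\widehat\Phi_\pm$ produced this way satisfy the $S^+$ smallness hypothesis of Theorem~\ref{ExMWO}, and that the resulting limit-system solution $G$ matches $\Psi_\pm g_1^\pm$ in $H^N$ after undoing the change of time variable. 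None of this is contained in the paper.

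The paper avoids the issue entirely by invoking Theorem~\ref{thm52} instead of Theorem~\ref{ExMWO}. Theorem~\ref{thm52} compares $e^{-it\mathcal D}U(t)$ directly to a solution of $i\partial_t W=\mathcal{N}^t_0[W,W,W]$, and the crucial point is that $\mathcal{N}^t_0$ retains the full oscillatory factor $e^{2it\eta\kappa}$: when one restricts $W=f_+ g_1^+ + f_- g_1^-$ with $f_\pm=e^{it\partial_x^2}\psi_\pm$ and uses $\mathcal T[g,\ov g,g]=0$ and $\mathcal T[g,g,g]=g/4$, the equation for $W$ is \emph{exactly} equivalent to the XPM system~\eqref{syst1} (no stationary phase, no $\pi/t$). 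So the bridge you are trying to build --- matching a $\pi/t$-rescaled effective equation to the unrescaled physical one --- simply does not appear. This is precisely what Theorem~\ref{thm52} buys you. If you want to salvage your route, you would need to carry out in full the modified-scattering analysis for the XPM system, which duplicates a substantial fraction of the machinery of Sections~4--5 of the paper; the paper's Theorem~\ref{thm52} is there exactly so that this duplication is unnecessary.
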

 
 There is also a statement similar to the point (2) of Proposition~\ref{prop15}.\medskip

A natural question is whether every solution $U$ of~\eqref{CNLS}, with initial condition $\Vert U_0\Vert_{S^+}\leq \eps$,  asymptotically decouple into a finite sum of the type $\dis \sum_{\it{finite}} \psi_j(t,x)f_j(t,y)$ when $t\longrightarrow +\infty$. The next result provides a negative answer. Denote by $ \mathcal{C}_{b}\big([0,+\infty)\big)$ the set of continuous and bounded functions on $\R_{+}$, then 

\begin{proposition}\label{prop17}
For all $\eps>0$, there exists $\Vert U_0\Vert_{S^+}\leq \eps$ and a solution $U\in \mathcal{C}\big([0,+\infty); H^N(\R\times \R^2)\big)$ to~\eqref{CNLS} so that  
\begin{equation*}
\big\Vert  U(t,x,y)-    \sum_{\text{finite}} \psi_j(t,x)f_j(t,y)   \big \Vert_{L^2(\R\times \R^2)} \not \longrightarrow 0, 
\end{equation*}
when $t\longrightarrow +\infty$, for any  $\psi_j \in \mathcal{C}_{b}\big([0,+\infty); L^{2}(\R)\big)$ and $f_j\in \mathcal{C}_{b}\big([0,+\infty); L^{2}(\R^{2})\big)$.
\end{proposition}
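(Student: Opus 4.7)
The idea is to use Theorem~\ref{ExMWO} to reduce the question to a rank-type statement about the limit system~\eqref{RSS}, exploiting the crucial fact that $e^{it\DD}=e^{-it\partial_x^2}\otimes e^{it\HH_d}$ is a tensor product of two unitaries on $L^2(\R\times\R^2)\simeq L^2(\R_x)\otimes L^2(\R^2_y)$, and therefore preserves the full sequence of Schmidt singular values of any element. Fix $N_0\geq 1$ and let $\mathcal V_{N_0}\subset L^2(\R\times\R^2)$ denote the closed set of tensors of Schmidt rank at most $N_0$, so that every admissible candidate $\sum_{j=1}^{N_0}\psi_j(t)\otimes f_j(t)$ lies in $\mathcal V_{N_0}$. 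For any small $G_0\in S^+$, Theorem~\ref{ExMWO} provides $U$ with $\|U(t)-e^{it\DD}G(\pi\ln t)\|_{L^2}\to 0$; since $e^{it\DD}\mathcal V_{N_0}=\mathcal V_{N_0}$ and $e^{it\DD}$ is an $L^2$-isometry,
\[
d_{L^2}\bigl(U(t),\mathcal V_{N_0}\bigr)=d_{L^2}\bigl(G(\pi\ln t),\mathcal V_{N_0}\bigr)+o(1).
\]
It thus suffices to exhibit a single small $G_0\in S^+$ whose~\eqref{RSS}-evolution satisfies $\limsup_{\tau\to\infty} d_{L^2}(G(\tau),\mathcal V_{N_0})>0$ for every $N_0$.

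For the construction I would take $G_0(x,y)=\delta\,\chi(x)\bigl(g_0(y)+g_2(y)\bigr)$, with $\chi\in\mathcal S(\R)$ chosen so that $|\widehat\chi|^2$ is non-constant on an open set and $\delta>0$ small enough that $\|G_0\|_{S^+}\leq\varepsilon$. The $\xi$-locality of~\eqref{RSS} together with its cubic scaling $\Phi_{ah}(s)=a\,\Phi_h(|a|^2s)$ yield
\[
\widehat G(\tau,\xi,y)=\delta\,\widehat\chi(\xi)\,\Phi\bigl(\delta^2|\widehat\chi(\xi)|^2\tau,\,y\bigr),
\]
where $\Phi(s)$ solves the (CR) equation with $\Phi(0)=g_0+g_2$. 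Because the selection rule $n_1+n_3=n_2+n$ preserves parity, the even-mode LLL subspace $\mathrm{span}\{g_{2k}\}_{k\geq 0}$ is invariant under~\eqref{RS}, so $\Phi(s)=\sum_{k\geq 0}c_k(s)\,g_{2k}$. The identity $\TT[g_2,g_0,g_2]=C\,g_4$ with $C\neq 0$, combined with iterated application of $\TT$, shows $c_k(s)=\alpha_k s^{k-1}+O(s^k)$ near $s=0$ with $\alpha_k\neq 0$ for $k\geq 2$. These pairwise distinct orders of vanishing force $(c_k)_{k\geq 0}$ to be linearly independent on $[0,\infty)$; since $|\widehat\chi|^2$ is non-constant, the family $\xi\mapsto\widehat\chi(\xi)\,c_k\bigl(\delta^2|\widehat\chi(\xi)|^2\tau\bigr)$ is linearly independent in $L^2_\xi$ for every $\tau>0$, and hence $\widehat G(\tau)$ has infinite Schmidt rank at every positive $\tau$.

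The main obstacle is to upgrade this pointwise statement to $\limsup_{\tau\to\infty}d_{L^2}(G(\tau),\mathcal V_{N_0})>0$, i.e.\ to rule out that the singular values indexed by $k>N_0$ collectively vanish as $\tau\to\infty$. My plan is to combine (i) the $\xi$-slice mass conservation $\|\widehat G(\tau,\xi,\cdot)\|_{L^2_y}^2=\delta^2|\widehat\chi(\xi)|^2\|g_0+g_2\|^2$ (constant in $\tau$), (ii) the fact that as $\xi$ ranges over $\mathrm{supp}\,\widehat\chi$ the rescaled time $s=\delta^2|\widehat\chi(\xi)|^2\tau$ sweeps an interval whose length grows linearly in $\tau$, so that the sampled set $\{\Phi(s):s\in s(\mathrm{supp}\,\widehat\chi)\}$ becomes increasingly dense in the curve $\{\Phi(s):s\geq 0\}$, and (iii) almost-periodicity/recurrence properties of low-energy (CR) orbits, in the spirit of Corollary~\ref{coro}(2) and~\cite{GHT1,GHT2}, to extract a subsequence $\tau_n\to\infty$ along which the Schmidt profile of $G(\tau_n)$ stays quantitatively close to the one of $G(\tau_0)$ at some fixed $\tau_0>0$, which has infinite rank by the previous paragraph. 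This persistence of the Schmidt tail is by far the most delicate step; the reduction in the first two paragraphs is essentially formal.
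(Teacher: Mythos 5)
Your reduction to a Schmidt-rank statement about the limit system via Theorem~\ref{ExMWO}, using that $e^{it\DD}=e^{-it\partial_x^2}\otimes e^{it\HH_d}$ is a tensor-product unitary, is the same first step as in the paper. The genuine gap is exactly the one you flag in your last paragraph: with data $\delta\chi(x)(g_0+g_2)$ the (CR)-orbit $\Phi(s)$ is non-stationary, so you must show that the singular values of $\widehat G(\tau,\xi,y)=\delta\widehat\chi(\xi)\Phi(\delta^2|\widehat\chi(\xi)|^2\tau,y)$ beyond index $N_0$ do not collectively vanish as $\tau\to\infty$. Your plan (i)--(iii) does not close this: the Taylor coefficients $\alpha_k s^{k-1}$ control the orbit only near $s=0$, while the relevant times $s=\delta^2|\widehat\chi(\xi)|^2\tau$ go to infinity for every $\xi$ in the bulk of $\mathrm{supp}\,\widehat\chi$; and the recurrence/almost-periodicity you invoke is established in the paper only for the two-dimensional $E_1$-dynamics (Corollary~\ref{coro}(2)), not for the infinite-dimensional even LLL sector where your orbit lives. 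Without a quantitative lower bound on the tail of the Schmidt spectrum that is uniform along a sequence $\tau_n\to\infty$, the argument does not conclude.

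The paper sidesteps this difficulty entirely by choosing $G_0$ so that the time evolution in each fixed $\xi$-slice is a pure phase. Concretely, since $\mu g(\mu^2 t,\mu y)$ solves \eqref{star} whenever $g$ does, and the Gaussian $e^{-|y|^2/2}$ is a (CR) stationary state, one may take $\widehat G(\tau,\xi,y)=\eps\phi(\xi)\,r(\xi)e^{-|y|^2 r^2(\xi)/2}e^{i\eps^2\phi^2(\xi)r^2(\xi)\tau/2}$ for a smooth non-constant $r(\xi)\in[1,2]$. The modulus $\eps\phi(\xi)r(\xi)e^{-|y|^2 r^2(\xi)/2}$ is then independent of $\tau$, and after stripping the unitaries $e^{-it\xi^2}$, $e^{-it\HH_2}$ and the scalar phase into the factors $\psi_j$, $f_j$, the contradiction hypothesis would force this fixed function of $(\xi,y)$ into the closure of rank-$\leq n$ tensors; but since $r$ is non-constant the $y$-profiles $e^{-|y|^2 r^2(\xi)/2}$ span an infinite-dimensional space, so it has infinite Schmidt rank. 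No control of the orbit for large $\tau$ is needed. If you want to salvage your construction, the cleanest repair is to replace $g_0+g_2$ by a single scaled Gaussian with a $\xi$-dependent scale exactly as above, turning your hard step (iii) into a triviality.
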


\subsection{Ideas of the proof and comments} Let us explain how we can formally derive the {\it limit equation}. We expand functions $F=\sum_{q\geq 0}F_q$ according to the eigenspaces of the harmonic oscillator ($F_q=\Pi_q F$), and let $F(t,x,y)=e^{-it\DD}U(t,x,y)$. Defining the Fourier transform on $\mathbb{R}$ by
\begin{equation*}
\mathcal{F}_{x}g(\xi)=\widehat{g}(\xi):=\frac{1}{2\pi}\int_{\mathbb{R}}e^{-ix\xi}g(x)dx,
\end{equation*}
we will establish in Section\;\ref{sect22} that $U$ solves~\eqref{CNLS} if and only if $F$ solves
\begin{equation} \label{FeqnIntro}
i\partial_t F(t) = \mathcal{N}^{t}[F(t),F(t),F(t)],
\end{equation}
where the nonlinear term $\mathcal{N}^{t}$ is defined by 
\begin{multline}\label{Nintro}
\mathcal{F}_x\mathcal{N}^{t}[F,G,H](\xi,y)=\\
=\kappa_0\sum_{\omega\in 2 \Z}e^{-it\omega}\sum_{(p,q,r,s)\in \Gamma_{\omega}} \Pi_{p}\int_{\mathbb{R}^2}e^{2it\eta\kappa} \widehat{  F}_{q}(\xi-\eta,y)\overline{\widehat{ G}_{r}}(\xi-\eta-\kappa,y)\widehat{  H}_{s}(\xi-\kappa,y)d\eta d\kappa .
\end{multline}
The idea is to write the nonlinearity $\mathcal N$ in~\eqref{FeqnIntro} as a sum of an effective part $\mathcal N_{eff}$ and an integrable part $\mathcal E$. The integrable part does not affect the asymptotic dynamics of $F$, which should therefore be described by the equation $i\partial_t F=\mathcal N_{eff}(F)$. This effective or {\it limit} equation turns out to be exactly~\eqref{RSS}, a fact that can be seen by performing two heuristic approximations:\medskip 

{\it The normal form reduction:} It is a general principle that the main contribution in the nonlinearity is given by the resonant terms, namely corresponding to $\om=0$. Roughly speaking, the other terms can -in principle- be shown to be perturbative thanks to integration by parts in time and using the decay in time of $\partial_t F$. Equivalently, this consists in applying a Birkhoff normal form. However, in this case there is a problem if the time derivative hits the other phase factor\;$e^{2it\eta\kappa}$ in the regime when $\eta \kappa$ is not small. In this case, one has to rely on the spatial oscillations of the integral in~\eqref{Nintro} (including resorting to refined 1D bilinear Strichartz estimates). The upshot is that the remaining term after this reduction reads
\begin{equation*}
\mathcal{F}_x\mathcal{N}_{nf}^{t}[F,G,H](\xi,y)
:=\kappa_0\sum_{(p,q,r,s)\in \Gamma_{0}} \Pi_{p}\int_{\mathbb{R}^2}e^{2it\eta\kappa} \widehat{  F}_{q}(\xi-\eta,y)\overline{\widehat{ G}_{r}}(\xi-\eta-\kappa,y)\widehat{  H}_{s}(\xi-\kappa,y)d\eta d\kappa .
\end{equation*}

{\it Stationary phase asymptotics:} The asymptotics of the above term when $t\to +\infty$ can be obtained thanks to a stationary phase argument in $(\eta,\kappa)$, which suggests that 
\begin{equation*}
\mathcal{F}_x\mathcal{N}_{nf}^{t}[F,G,H](\xi,y)
\sim  \frac{\pi \kappa_0}t\sum_{(p,q,r,s)\in\Gamma_0}\Pi_p\Big(\widehat{F}_q(\xi,y)\overline{\widehat{G}_r}(\xi,y)\widehat{H}_s(\xi,y)\Big):=\frac{\pi \kappa_0}t\mathcal{F}_x\mathcal{R}[F,G,H](\xi,y).
\end{equation*}
This is precisely the nonlinearity in~\eqref{RSS}, after the change of time variable $\tau =\pi \ln t$. 

The point of the proof consists of making rigorous the above heuristics, and one of the key steps is to establish Proposition~\ref{StrucNon} which proves that the error committed in the above approximation is integrable in time. As was the case in~\cite{HPTV}, a big difficulty comes from the fact that we cannot establish the decay $\|U\|_{L^\infty}\lesssim t^{-1/2}$, which requires us to use a hierarchy of three norms to control our solutions $Z\subset S \subset S^+$. The $Z$ norm is conserved for the limit equation~\eqref{RSS} and will be uniformly bounded for~\eqref{CNLS}. The $S$ and $S^+$ grow slowly in time with $S^+$ being stronger only in the untapped direction. For a solution controlled in the $S^+$ norm, the difference between\;$U(t)$ and its limit dynamics given by~\eqref{RSS} decays in $S$ norm. This was also the strategy in~\cite{HPTV}; however we point out some differences:
\begin{itemize}
\item[$\bullet$] The Schr\"odinger evolution group associated to the harmonic oscillator enjoys a full range of Strichartz estimates (see~\eqref{Stri0}), which we freely exploit. These are enough to prove Lemma~\ref{Lem32} compared to the  corresponding result~\cite[Lemma 7.1]{HPTV}, where bilinear  Strichartz estimates were needed.
\item[$\bullet$] In many places in~\cite{HPTV} it was convenient to bound the eigenfunctions $(e^{in\cdot x})_{n\in \Z^{d}}$ of $\Delta_{\T^{d}}$ by 1, and to use that $\int_{\T^{d}} e^{in_{1}\cdot x}\ov{e^{in_{2}\cdot x}}e^{in_{3}\cdot x}\ov{e^{in_{4}\cdot x}}dx=\delta_{n_{1}-n_{2}+n_{3}-n_{4}}$, which in turn induces a convolution structure in many estimates (see~\cite[estimate (2.14)]{HPTV}). Such things do not hold true for the harmonic oscillator and the Hermite functions. To tackle this issue, we often resort to the Lens transform (see for example~\cite{Tao}) and use linear and bilinear Strichartz estimates (see {\it e.g.} the proof of Lemma~\ref{BilEf} and Lemma~\ref{ddBE}).
\end{itemize}

\subsection{Plan of the paper} The rest of the paper is organized as follows. In Section~\ref{sect2} we introduce the main notations of the paper and state some preliminary estimates. Section~\ref{sect3} is devoted to the study of the resonant and limit systems. In Section~\ref{sect4} we prove the key result on the structure of the nonlinearity, which will be at the heart of the proof of the main theorems in Section~\ref{sect5}.  Finally, in Section~\ref{sect6} we state and prove a transfer principle which allows us to simplify the proof of many trilinear estimates throughout the paper. 

\subsection{Acknowledgements:} We wish to thank Panayotis Kevrekidis for several illuminating discussions pointing out the relation of this work to the superfluidity and Bose-Einstein condensates in ``cigar-shaped" traps (namely reference~\cite{Frantz}). This work was initiated during the visit of the second author to the Courant Institute of Mathematical Sciences, and he thanks the Institute for its hospitality.

\section{Notations and preliminary estimates}\label{sect2}
\subsection{Standard notations} In this paper, $\N$ denotes the set of all non-negative integers, and~$\Z$ is the set of all integers.
We will often consider functions $f:\mathbb{R}\to\mathbb{C}$ and functions ${F:\mathbb{R}\times \mathbb{R}^d\to\mathbb{C}}$, which we distinguish by using the convention that lower case letters denote functions defined on $\mathbb{R}$, capitalized letters denote functions defined on $\mathbb{R}\times\mathbb{R}^d$.
\medskip

We denote by   $\HH_{d}$ the harmonic oscillator  in dimension $d$. The operator  $\HH_{d}$ admits a Hilbertian  basis of eigenvectors for $L^2(\R^{d})$. We will denote the $n-$th eigenspace by $E_n$ with $n \in \N$ and the corresponding eigenvalue by $\lambda_n=2n+d$. Denote by $K_{n}$ the dimension of $E_{n}$. Then  $K_{n}=\# \{(k_{1},\dots, k_{d}) \in \N^{d}\;:\; k_{1}+\cdots+ k_{d}=n\}$ and we can   check that   $K_{n}\sim c_{d}n^{d-1}$. Each eigenspace $E_n$ is spanned by the Hermite functions $(\psi_{n,j})_{1\leq j\leq K_{n}}$. For more details,   we refer to Helffer~\cite{Helffer} or to the course of Ramond~\cite{Ramond}.
\medskip

The scale  of harmonic Sobolev spaces is  defined as follows:
$s\geq 0$, $p\geq 1$. 
 \begin{equation*} 
         \WW^{s, p}= \WW^{s, p}(\R^d) = \big\{ u\in L^p(\R^d),\; \HH_{d}^{s/2}u\in L^p(\R^d)\big\},     
       \end{equation*}
       \begin{equation*}
           \HH^{s}=   {\mathcal H}^{s}(\R^d) = \WW^{s, 2}.
       \end{equation*}
             The natural norms are denoted by $\Vert u\Vert_{\WW^{s,p}}$ and up to equivalence of norms we have (see~\cite[Lemma~2.4]{YajimaZhang2}) 
             for $1<p<+\infty$
             \begin{equation}\label{eq}
                 \Vert u\Vert_{\WW_{y}^{s,p}} = \Vert  \HH_{d}^{s/2}u\Vert_{L_{y}^{p}} \equiv \Vert (-\Delta)^{s/2} u\Vert_{L_{y}^{p}} + 
       \Vert\<y\>^{s}u\Vert_{L_{y}^{p}}.
             \end{equation}

In all the paper, we use the notation 
\begin{equation*}
\DD=-\partial^2_x+\mathcal{H}_{d}.
\end{equation*}

Recall that we have defined the Fourier transform on $\mathbb{R}$ by
\begin{equation*}
\mathcal{F}_{x}g(\xi)=\widehat{g}(\xi):=\frac{1}{2\pi}\int_{\mathbb{R}}e^{-ix\xi}g(x)dx.
\end{equation*}
Similarly, if $F(x,y)$ depends on $(x,y)\in\mathbb{R}\times\mathbb{R}^{d}$, $\widehat{F}(\xi,y)$ denotes the partial Fourier transform in~$x$.  Denote by  $\Pi_n$ the projection onto the eigenspace $E_n$ and $h_{n,j}:=\langle h, \psi_{n,j}\rangle$. We then  consider the Hermite expansion of functions $h:\mathbb{R}^{d}\to\mathbb{C}$,
\begin{equation*}
h(y)=\sum_{n\geq 0} h_n(y) \qquad h_n:=\Pi_n(h)=\sum_{j=1}^{K_{n}} h_{n,j}\psi_{n,j}=\sum_{j} h_{n,j}\psi_{n,j},
\end{equation*}
with the convention that $\psi_{n,j}=0$ if $j\geq K_n$. For a bounded function $\varphi$ we define
\begin{equation*}
\varphi(H)h=\sum_{n\geq 0}\varphi(\lambda_{n})h_{n}.
\end{equation*}

The full frequency expansion of $F$ reads
\begin{equation*}
F(x,y)=  \sum_{n \in \N} \int_{\R} \widehat F_n(\xi, y) e^{i\xi x}d\xi= \sum_{n \in \N}\sum_{j=1}^{K_{n}} \int_{\R} \widehat F_{n,j}(\xi) e^{i\xi x} \psi_{n,j}(y)d\xi\, .
\end{equation*}

We will often use Littlewood-Paley projections. For the full frequency space, these are defined as follows:
\begin{equation*}
\begin{split}
\left(\mathcal{F}_{x}P_{\leq N}F\right)(\xi,y)=\varphi(\frac{\xi}{N})\varphi(\frac{\mathcal H}{N^2})\widehat F_p(\xi,y),
\end{split}
\end{equation*}
where $\varphi\in \mathcal{C}^\infty_c(\mathbb{R})$, $\varphi(x)=1$ when $\vert x\vert\leq 1$ and $\varphi(x)=0$ when $\vert x\vert\geq 2$.

In addition, we also define
\begin{equation*} 
\phi(x)=\varphi(x)-\varphi(2x)
\end{equation*}
and
\begin{equation*}
P_N=P_{\leq N}-P_{\leq N/2},\quad P_{\geq N}=1-P_{\leq N/2}.
\end{equation*}

With this notation, the full Sobolev $H^s$   norm on function on $\R\times \R^d$ takes the form
\begin{equation*} 
\|F\|_{H^s(\R\times \R^d)}=\Big(\sum_{N \in 2^{\mathbb N_0}} N^{2s}\|P_N F\|_{L_{x,y}^2}^2\Big)^{1/2}.
\end{equation*}

Many times we concentrate on the frequency in $x$ only, and we therefore define
\begin{equation*}
\begin{split}
\left(\mathcal{F}_{x}Q_{\leq N}F\right)(\xi,y)=\varphi(\frac{\xi}{N})\left(\mathcal{F}_{x}F\right)(\xi,y),
\end{split}
\end{equation*}
and define $Q_N$ similarly. By a slight abuse of notation, we will consider $Q_N$ indifferently as an operator on functions defined on $\mathbb{R}\times\R^{d}$ and on $\mathbb{R}$.
With the Parseval formula, it is easy to check that  
\begin{equation}\label{comm}
\big\|[Q_N,x]\big\|_{L^2_x\rightarrow L^2_x}\lesssim N^{-1}\,.
\end{equation}

Denote by $O_{N}$ the frequency localization in $y$ only. It is then easy to check that the full Sobolev norm $H^{s}(\R\times \R^{d})$ also reads
\begin{equation}\label{Full Sobolev2}
\|F\|_{H^s(\R\times \R^d)}=\Big(\sum_{M,L \in 2^{\mathbb N_0}} (M^{2s}+N^{2s})\|Q_{M}O_{L} F\|_{L_{x,y}^2}^2\Big)^{1/2}.
\end{equation}

\medskip

For $\omega\in 2\Z$, we will use the following sets corresponding to momentum and resonance level sets:
\begin{equation*} 
\begin{split}
\Gamma_\omega&:=\{(p,q,r,s)\in\N:\,\,\lambda_p-\lambda_q+\lambda_r-\lambda_s=2(p-q+r-s)=\omega\}.\\
\end{split}
\end{equation*}
\subsection{The nonlinearity}\label{sect22}
We will prove all our statements for $t\geq 0$. By time-reversal symmetry, one obtains the analogous claims for $t\leq 0$.
We shall consider only the case $\kappa_0=1$ in~\eqref{CNLS}. Since we only deal with small data the case $\kappa_0=-1$ can be treated similarly. \ligne

Let us define the trilinear form $\mathcal{N}^{t}$  by 
\begin{equation}\label{defN}
\mathcal{N}^{t}[F,G,H]:=e^{-it\DD}\Big( e^{it\DD}F\cdot e^{-it\DD}\overline{G}\cdot e^{it\DD}H\Big).
\end{equation}
Let 
\begin{equation*}
U(t,x,y)=e^{it\DD} F(t),
\end{equation*}
then we see that $U$ solves~\eqref{CNLS} if and only if $F$ solves
\begin{equation*} 
i\partial_t F(t) = \mathcal{N}^{t}[F(t),F(t),F(t)].
\end{equation*}
The following formulation holds true
\begin{equation*}
\mathcal{F}_x\mathcal{N}^{t}[F,G,H](\xi,\cdot)
=e^{-i\tau\HH_{d}}\int_{\mathbb{R}^2}e^{2it\eta\kappa} \widehat{e^{it\HH_{d}} F}(\xi-\eta)\overline{\widehat{e^{it\HH_{d}} G}}(\xi-\eta-\kappa)\widehat{e^{it\HH_{d}} H}(\xi-\kappa)d\eta d\kappa .
\end{equation*}
Actually, set 
\begin{equation} \label{defI}
\mathcal{I}^{t}[f,g,h]:=\UU(-t)\Big(\UU(t)f\, \overline{\UU(t)g}\,\UU(t)h \Big),\quad \UU(t)=\exp(-it\partial_x^2),
\end{equation}
then $\dis\mathcal{N}^{t}[F,G,H]=e^{-it\HH_{d}} \mathcal{I}^{t}[e^{it\HH_{d}}F,e^{i\tau\HH_{d}}G,e^{it\HH_{d}}H]$ with 
\begin{equation*}
\mathcal{F}_x\mathcal{I}^{t}[f,g,h](\xi)=\int_{\mathbb{R}^2}e^{2it\eta\kappa} \widehat{f}(\xi-\eta)\overline{\widehat{g}}(\xi-\eta-\kappa)\widehat{h}(\xi-\kappa)d\eta d\kappa.
\end{equation*}

By expanding $F=\sum_{q\geq 0}F_q$ according to the eigenspaces of the harmonic oscillator, and for $G,H$ as well, one may also write
\begin{equation*}
\mathcal{F}_x\mathcal{N}^{t}[F,G,H](\xi,y)
=\sum_{\omega\in 2\Z}e^{-it\omega}\sum_{(p,q,r,s)\in \Gamma_{\omega}} \Pi_{p}\int_{\mathbb{R}^2}e^{2it\eta\kappa} \widehat{  F}_{q}(\xi-\eta,y)\overline{\widehat{ G}_{r}}(\xi-\eta-\kappa,y)\widehat{  H}_{s}(\xi-\kappa,y)d\eta d\kappa .
\end{equation*}
According to our previous discussion, we now define  
the resonant part of the nonlinearity 
as
\begin{equation}\label{DefOfR}
\mathcal{F}_x\mathcal{R}[F,G,H](\xi,y):=\sum_{(p,q,r,s)\in\Gamma_0}\Pi_p\left(\widehat{F}_q(\xi,y)\overline{\widehat{G}_r}(\xi,y)\widehat{H}_s(\xi,y)\right).
\end{equation}
  
\subsection{The norms}
It will be convenient to use the following norms when dealing with functions on\;$\R^{d}$
\begin{equation*}
\Vert f\Vert_{\mathcal H_{y}^s}:=\|\mathcal H_{d}^{s/2} f\|_{L^2(\R^d)}\approx \Big( \sum_{p\in \N}(1+p)^{s} \sum_{j} \vert f_{p, j}\vert^2\Big)^{1/2}.
\end{equation*}
For functions, we will often omit the domain of integration from the description of the norms. However, we will indicate it by a subscript $x$ (for $\mathbb{R}$), $x,y$ (for $\mathbb{R}\times\mathbb{R}^d$). We will use mainly three different norms: a weak norm
\begin{equation*}
\Vert F\Vert_{Z}^2:=\sup_{\xi\in\mathbb{R}}\left[1+\vert \xi\vert^2\right]^2\sum_{p}(1+p) \Vert\widehat{F}_{p}(\xi)\Vert_{L^2_y}^2=\sup_{\xi\in\mathbb{R}}\left[1+\vert \xi\vert^2\right]^2\Vert \widehat{F}(\xi)\Vert_{\mathcal H_y^1}^2
\end{equation*}
and two strong norms
\begin{equation}\label{DefSNorm}
\begin{split}
\Vert F\Vert_{S}:=&\Vert F\Vert_{H^N_{x,y}}+\Vert xF\Vert_{L^2_{x}L^2_{y}},\quad
\Vert F\Vert_{S^+}:=\Vert F\Vert_{S}+\Vert (1-\partial_{xx})^4F\Vert_{S}+\Vert xF\Vert_{S}.
\end{split}
\end{equation}

Note that the operators $Q_{\leq N}$, $P_{\leq N}$ and the multiplication by $\varphi(\cdot/N)$ are bounded in $Z$,\;$S$ and\;$S^+$, uniformly in $N$. Elementary considerations (see Lemma~\ref{ZweakerS} below) show that the following holds
\begin{equation*} 
\Vert F\Vert_{H^1_{x,y}}\lesssim \Vert F\Vert_{Z}\lesssim\Vert F\Vert_{S}\lesssim \Vert F\Vert_{S^+}.
\end{equation*}

Finally, we are ready to fix the space-time norms we will use:   Fix once and for all $\delta<10^{-3}$ and define
\begin{equation*} 
\begin{split}
\Vert F\Vert_{X_T}:=&\sup_{0\leq t\leq T}\big\{\Vert F(t)\Vert_{Z}+(1+\vert t\vert)^{-\delta}\Vert F(t)\Vert_{S} +(1+\vert t\vert)^{1-3\delta}\Vert\partial_t F(t)\Vert_{S}\big\},\\
\Vert F\Vert_{X_T^{+}}:=&\Vert F\Vert_{X_T}+\sup_{0\leq t\leq T}\big\{(1+\vert t\vert)^{-5\delta}\Vert F(t)\Vert_{S^+}+(1+\vert t\vert)^{1-7\delta}\Vert\partial_t F(t)\Vert_{S^+}\big\}.
\end{split}
\end{equation*}
 
\subsection{Preliminary Estimates}
We gather here some useful preliminary estimates that will be useful later in our work.
\begin{lemma}\label{ZweakerS}
\begin{enumerate}
\item  Let $1/2<\a\leq 1$ and $f\in L^{1}(\R)$. Then 
\begin{equation}\label{L11}
\|f\|_{L^1(\mathbb{R})}\lesssim \|f\|_{L^2(\R)}^{1-\frac{1}{2\a}}\| |x|^{\a}  f\|_{L^2(\R)}^{\frac{1}{2\a}}.
\end{equation}

\item Assume that $N\geq 8$ and $F: \R\times \R^d \to \C$. Then the following estimate holds true
\begin{equation}\label{ZSNorm}
 \Vert F\Vert_{Z}\lesssim \Vert F\Vert_{L^2_{x,y}}^\frac{1}{8}\Vert F\Vert_{S}^\frac{7}{8}.
\end{equation}
\end{enumerate}
\end{lemma}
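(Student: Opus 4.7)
The plan is as follows.

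For part~(1), the standard truncation argument suffices: split $\|f\|_{L^1(\mathbb R)}=\int_{|x|\le R}|f|\,dx+\int_{|x|>R}|f|\,dx$, bound the first integral by $(2R)^{1/2}\|f\|_{L^2}$ via Cauchy--Schwarz, and for the second, write $|f|=|x|^{-\alpha}\cdot|x|^{\alpha}|f|$ and apply Cauchy--Schwarz using $\int_{|x|>R}|x|^{-2\alpha}\,dx=\frac{2R^{1-2\alpha}}{2\alpha-1}$ (finite because $\alpha>1/2$); this yields a bound of order $R^{1/2-\alpha}\||x|^{\alpha}f\|_{L^2}$. Optimizing in $R$ by setting $R^\alpha=\||x|^\alpha f\|_{L^2}/\|f\|_{L^2}$ produces the announced exponents $1-1/(2\alpha)$ and $1/(2\alpha)$.

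For part~(2), I would work with a dyadic Littlewood--Paley decomposition $F=\sum_N P_NF$ tailored to the mixed nature of the $Z$-norm. Rewriting $\|F\|_Z^2=\sup_\xi(1+|\xi|^2)^2\|\widehat F(\xi,\cdot)\|_{\mathcal H_y^1}^2$ and exploiting that the full projection $P_N$ localizes simultaneously $|\xi|\lesssim N$ and $\mathcal H\lesssim N^2$, so that $\|\widehat{P_NF}(\xi,\cdot)\|_{\mathcal H_y^1}^2\lesssim(1+N^2)\|\widehat{P_NF}(\xi,\cdot)\|_{L_y^2}^2$ on its support, the near-orthogonality of the $P_N$ gives
\[\|F\|_Z^2\ \lesssim\ \sum_N(1+N^2)^3\sup_\xi\|\widehat{P_NF}(\xi,\cdot)\|_{L_y^2}^2.\]
I would then apply Minkowski's integral inequality, $\sup_\xi\|\widehat{P_NF}(\xi,\cdot)\|_{L_y^2}\le\|P_NF\|_{L_x^1L_y^2}$, and use part~(1) with $\alpha=1$ applied to the scalar function $x\mapsto\|(P_NF)(x,\cdot)\|_{L_y^2}$ to deduce
\[\sup_\xi\|\widehat{P_NF}(\xi,\cdot)\|_{L_y^2}^2\ \lesssim\ \|P_NF\|_{L^2_{x,y}}\,\|xP_NF\|_{L^2_{x,y}}.\]

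The crucial step is a Cauchy--Schwarz over the dyadic scale combined with a commutator bound, giving
\[\sum_N N^6\|P_NF\|_{L^2}\|xP_NF\|_{L^2}\ \le\ \|F\|_{H^6}\,\Big(\sum_N\|xP_NF\|_{L^2}^2\Big)^{1/2}.\]
Since $x$ commutes with $\varphi(\mathcal H/N^2)$, one has $[x,P_N]=[x,\varphi(\xi/N)]\,\varphi(\mathcal H/N^2)$ with $\|[x,P_N]\|_{L^2\to L^2}\lesssim N^{-1}$ by~\eqref{comm}; decomposing $xP_NF=P_N(xF)+[x,P_N]F$, using the almost-orthogonality of $\{P_N(xF)\}$, and summing the convergent series $\sum N^{-2}$ then yields $\bigl(\sum_N\|xP_NF\|_{L^2}^2\bigr)^{1/2}\lesssim\|xF\|_{L^2}+\|F\|_{L^2}\lesssim\|F\|_S$. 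Finally, the Sobolev interpolation $\|F\|_{H^6}\lesssim\|F\|_{L^2}^{1/4}\|F\|_{H^8}^{3/4}$ (allowed because $N\ge 8$ forces $\|F\|_{H^8}\le\|F\|_S$) produces $\|F\|_Z^2\lesssim\|F\|_{L^2}^{1/4}\|F\|_S^{7/4}$, which is the claim. The main obstacle is to get the exponents $(1/8,7/8)$ precisely right: any approach that places derivatives and the weight $x$ on the same $P_NF$ would force a bound involving $\|xF\|_{H^s}$ for some $s>0$, which lies in $S^+$ but not in $S$. Separating these factors through the Cauchy--Schwarz split above, and absorbing the commutator loss at the $L^2$-level, is what makes the correct interpolation exponents emerge.
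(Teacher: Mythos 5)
Part~(1) is the paper's proof verbatim. Part~(2) is correct, but the route is genuinely different from the one the authors take: you decompose with the \emph{joint} Littlewood--Paley projection $P_N$, which simultaneously forces $|\xi|\lesssim N$ and $\mathcal H\lesssim N^2$, so that $(1+|\xi|^2)^2\lesssim N^4$ and $\|\cdot\|_{\mathcal H^1_y}\lesssim N\|\cdot\|_{L^2_y}$ hold together on each block, and then a Cauchy--Schwarz over $N$ immediately produces $\|F\|_{H^6}\bigl(\sum_N\|xP_NF\|_{L^2}^2\bigr)^{1/2}$ with the commutator $[x,P_N]$ (equivalently $[x,Q_{\le N}]O_{\le N}-[x,Q_{\le N/2}]O_{\le N/2}$, each of norm $\lesssim N^{-1}$ by \eqref{comm}) absorbed at the $L^2$ level. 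The paper instead fixes the Hermite mode $p$, localizes only in $x$ via $Q_N$, sums the geometric series $\sum_N N^{-1/2}$ directly to obtain $[1+|\xi|^2]\|\widehat F_p(\xi)\|_{L^2_y}\lesssim\|F_p\|_{H^5_x}^{1/2}\|xF_p\|_{L^2_x}^{1/2}$, and only then multiplies by $\langle p\rangle$, squares, and sums in $p$. Both arguments rest on the same three ingredients (the weighted $L^1$ estimate~\eqref{L11} with $\alpha=1$, the commutator bound~\eqref{comm}, and the interpolation $\|F\|_{H^6}\lesssim\|F\|_{L^2}^{1/4}\|F\|_{H^8}^{3/4}$ available since $N\ge 8$). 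Your version has the advantage that keeping the $x$- and $y$-frequency scales coupled through $P_N$ means the power $N^{12}$ that appears after Cauchy--Schwarz is literally the $H^6$ weight from~\eqref{Full Sobolev2}; the paper must afterwards recombine the dyadic $x$-scale $N$ with the Hermite scale $p$, which is slightly more delicate bookkeeping. Your final observation about why one must not put the weight $x$ and the derivatives on the same $P_NF$ is exactly the point: it is what forces the Cauchy--Schwarz split to land in $S$ rather than $S^+$.
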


\begin{proof}
(1) Let $R>0$ and $\a>1/2$. By Cauchy-Schwarz we have
\begin{eqnarray*}
\|f\|_{L^1(\mathbb{R})}&=& \int_{|x|<R}|f|+ \int_{|x|>R}|x|^{-\a}|x|^{\a} |f|\\
&\leq & C R^{1/2}\big\|f\big\|_{L^{2}(\R)}+C R^{1/2-\a}\big\||x|^{\a}f\big\|_{L^{2}(\R)}.
\end{eqnarray*}
We then get the result with the choice $R^{\a}=\big\| |x|^{\a}  f\big\|_{L^2(\R)}/\big\|   f\big\|_{L^2(\R)}$.

(2) By part (1) with $\a=1/2$ and~\eqref{comm} we have
\begin{equation*}
\begin{split}
\left[1+\vert \xi\vert^2\right]\vert \widehat{F}_{p}(\xi,\cdot)\vert&
\lesssim\sum_N N^2\vert \widehat{Q_NF_{p}}(\xi)\vert  \lesssim \sum_NN^2\Vert Q_NF_{p}\Vert_{L_x^1}\\ 
&\lesssim \sum_NN^2\Vert Q_NF_{p}\Vert_{L_x^2}^\frac{1}{2}\Vert  x  Q_NF_{p}\Vert_{L_x^2}^\frac{1}{2} \\
&\lesssim \sum_NN^{-\frac{1}{2}}\Vert (1-\partial_{xx})^\frac{5}{2}F_{p}\Vert_{L_x^2}^\frac{1}{2}\Vert x F_{p}\Vert_{L_x^2}^\frac{1}{2}\lesssim \Vert F_{p}\Vert_{H_x^5}^\frac{1}{2}\Vert   x  F_{p}\Vert_{L_x^2}^\frac{1}{2}.
\end{split}
\end{equation*}
Squaring the previous estimate and multiplying  by $\langle p\rangle$, one obtains
\begin{equation*}
\|F\|_{Z}\lesssim \Vert xF\Vert^{1/2}_{L^2_{x,y}}\Vert F\Vert^{1/2}_{H^6_{x,y}}\lesssim  \Vert F\Vert_{L^2_{x,y}}^\frac{1}{8}\Vert F\Vert_{S}^\frac{7}{8},
\end{equation*}
where we concluded by interpolation, using that   $N\geq 8$.
\end{proof}

We now give some crude estimates on $\NN^{t}$ in the $S$ and $S^{+}$ norms.
\begin{lemma}\label{wup}
Let $N\geq 4$, then the following estimates hold true:
\begin{equation*} 
\begin{split}
\Vert \mathcal{N}^{t}[F,G,H]\Vert_{S}&\lesssim (1+\vert t\vert)^{-1}\Vert F\Vert_{S}\Vert G\Vert_{S}\Vert H\Vert_{S},\\[3pt]
\Vert \mathcal{N}^{t}[F^{a},F^{b},F^{c}]\Vert_{S^+}&\lesssim (1+\vert t\vert)^{-1}\max_{\sigma\in\mathfrak{S}_3}\Vert F^{\sigma(a)}\Vert_{S^+}\Vert F^{\sigma(b)}\Vert_{S}\Vert F^{\sigma(c)}\Vert_{S}.
\end{split}
\end{equation*}
\end{lemma}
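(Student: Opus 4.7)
The plan is to combine the 1D dispersive estimate in the $x$-variable with Sobolev embedding in $y$, after writing $\mathcal N^t[F,G,H]=e^{-it\DD}(A\bar{B}C)$ with $A=e^{it\DD}F,\ B=e^{it\DD}G,\ C=e^{it\DD}H$. Since $e^{\pm it\DD}$ is unitary on $L^2_{x,y}$ and on $H^N_{x,y}$, and since $\HH_d$ acts only in $y$ (hence commutes with $x$ and $\partial_x$), one has the commutation identity $e^{-it\DD}\,x\,e^{it\DD}=x-2it\partial_x$. This reduces the claim to controlling $\|ABC\|_{H^N_{x,y}}$ together with $\|xABC\|_{L^2_{x,y}}+|t|\|\partial_x(ABC)\|_{L^2_{x,y}}$ (and the analogous additional terms $(1-\partial_{xx})^4(ABC)$ and $x(ABC)$ for the $S^+$ version).

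For small times $|t|\le 1$, the prefactor $(1+|t|)^{-1}$ is $O(1)$ and the estimate follows from the Sobolev algebra property of $H^N_{x,y}$ (valid for $N\ge 4>(d+1)/2$ since $d\le 4$) together with the Leibniz rule for $x$ and $\partial_x$. The substantive work is thus the regime $|t|\ge 1$. There I would use the factorization $e^{it\DD}=e^{-it\partial_x^2}\circ e^{it\HH_d}$ (the two semigroups commute). Because $e^{it\HH_d}$ is an $L^2_y$-isometry commuting with $x$ and $\partial_x$, the 1D dispersive estimate $\|e^{-it\partial_x^2}f\|_{L^\infty_x}\lesssim |t|^{-1/2}\|f\|_{L^1_x}$, applied pointwise in $y$ and combined with~\eqref{L11} at $\alpha=1$, yields
\begin{equation*}
\|(e^{it\DD}F)(\cdot,y)\|_{L^\infty_x}\lesssim |t|^{-1/2}\|F(\cdot,y)\|_{L^2_x}^{1/2}\|xF(\cdot,y)\|_{L^2_x}^{1/2}.
\end{equation*}
Expanding $F=\sum_n F_n(x)\psi_n(y)$ in Hermite modes, multiplying by a weight $(1+\lambda_n)^s$, and applying Cauchy--Schwarz in $n$, followed by the Sobolev embedding $\HH^s_y\hookrightarrow L^\infty_y$ for some $s>d/2$ (affordable since $d\le 4$ and $N\ge 4$ allow $2s\le N$), yields a dispersive bound of the schematic form $\|e^{it\DD}F\|_{L^\infty_{x,y}}\lesssim |t|^{-1/2}\|F\|_S$.

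Inserting two such bounds into the H\"older inequality $\|ABC\|_{L^2_{x,y}}\le \|A\|_{L^\infty_{x,y}}\|B\|_{L^\infty_{x,y}}\|C\|_{L^2_{x,y}}$ produces the required $|t|^{-1}$ decay for the $L^2$-part of the $S$-norm. The $H^N_{x,y}$-part is obtained by Littlewood--Paley localization and distributing the operator $\DD^{N/2}$ over the three factors by the Leibniz rule, placing the $N$ derivatives on whichever factor is taken in $L^2_{x,y}$ and applying the $L^\infty$ dispersive bound to the other two. The $x$-weight and the $|t|\partial_x$ commutator term from the commutation identity are handled similarly, using that $[x,e^{it\HH_d}]=0$ and that $\partial_x$ commutes with both semigroups so it enjoys the same 1D dispersive estimates. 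For the $S^+$ version, the argument is identical, but the Hölder distribution is arranged so that the extra weights $(1-\partial_{xx})^4$ and $x$ from the $S^+$-norm are always placed on a single factor while the other two are controlled in $S$ alone.

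The main obstacle I anticipate is bookkeeping: one has to organize the trilinear Hölder estimate so that the $|t|^{-1/2}$ dispersive gain lands on two distinct factors, while the $\HH_d^{s/2}$-loss coming from Sobolev embedding in $y$ is absorbed into the $H^N_{x,y}$ component of a factor that does \emph{not} simultaneously carry the $x$-weight (since the $S$-norm controls $xF$ only in $L^2_{x,y}$, not in $L^2_x\HH^s_y$). Keeping this balance consistent, especially in the asymmetric $S^+$-inequality where at most one of the three slots may be measured in $S^+$, is where the hypothesis $N\ge 4$ (combined with $d\le 4$) is used.
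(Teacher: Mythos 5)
Your strategy coincides with the paper's: reduce to an $L^2_{x,y}$ bound by distributing the $S$/$S^+$ weights over the three factors (the paper does this systematically via the transfer principle of Lemma~\ref{ControlSS+}), then prove a dispersive bound of the form $\|e^{it\DD}F\|_{L^\infty_{x,y}}\lesssim|t|^{-1/2}\|F\|_S$ and conclude by H\"older. The one substantive point where you diverge is in how the $L^\infty_y$ control is obtained after the one--dimensional dispersive estimate: you invoke the Sobolev embedding $\HH^s_y\hookrightarrow L^\infty_y$ (equivalently $\|\Pi_q\|_{L^2_y\to L^\infty_y}\lesssim q^{s/2}$ with $s>d/2$), whereas the paper uses the sharper Thangavelu--Karadzhov bound $\sup_y\sum_j|\psi_{q,j}(y)|^2\leq Cq$. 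Both lead to the same chain (Cauchy--Schwarz over the Hermite index with a summable weight, shifting the whole $\HH^s_y$-loss onto the factor that is \emph{not} $x$-weighted, which is exactly the bookkeeping issue you flag), but your version costs an extra $q^{(d-2)/2+\epsilon}$, so that the threshold becomes roughly $N>d$ rather than the paper's $N\geq 4$; your parenthetical ``$N\ge 4$ allows $2s\le N$'' is off by one for $d=4$, since $s>2$ forces $N>4$. This is harmless for the paper's purposes since the theorems are stated for $N\geq 8$. Two small remarks: the commutation identity should read $e^{-it\DD}\,x\,e^{it\DD}=x+2it\partial_x$ (your sign is reversed); and the statement that $e^{\pm it\DD}$ is unitary on $H^N_{x,y}$ is only correct because the paper's $H^N_{x,y}$ is the $\DD$-adapted Sobolev scale built from $P_N=\varphi(\xi/N)\varphi(\HH/N^2)$ --- with the Euclidean $H^N$, $e^{it\HH_d}$ mixes $\partial_y$ and $y$ (see Lemma~\ref{lem.com}), and one genuinely needs the $\Xi=(\partial_y,y)$-Leibniz structure that Lemma~\ref{ControlSS+} packages, rather than plain unitarity.
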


\begin{proof}
By  Lemma~\ref{ControlSS+}, it is sufficient to prove that 
\begin{equation}\label{SuffLem}
\Vert \mathcal{N}^{t}[F^{a},F^{b},F^{c}]\Vert_{L^2_{x,y}}\lesssim (1+\vert t\vert)^{-1}\min_{\sigma\in\mathfrak{S}_3}\Vert F^{\sigma(a)}\Vert_{L^2_{x,y}}\Vert F^{\sigma(b)}\Vert_{S}\Vert F^{\sigma(c)}\Vert_{S},
\end{equation}
and by symmetry, we only consider the case $\s=id$. Let $G\in L^2_{x,y}$, then by~\eqref{defN}
\begin{eqnarray*}
\<    \mathcal{N}^{t}[F^{a},F^{b},F^{c}],G     \>_{L^2_{x,y}}&=&\int_{x,y}   \big(e^{it\DD}F^{a}\big) \big( e^{-it\DD}\overline{F^{b}}\big) \big(e^{it\DD}F^{c}\big) \big(e^{-it\DD}\ov{G} \big)dxdy\\
&\leq &   \big\Vert  F^{a}\big\Vert_{L^2_{x,y}} \big\Vert e^{it\DD}F^{b}\big\Vert_{L^{\infty}_{x,y}}\big\Vert e^{it\DD}F^{c}\big\Vert_{L^{\infty}_{x,y}}\big\Vert  G\big\Vert_{L^2_{x,y}}.
\end{eqnarray*}
$\bullet$ Assume $\vert t\vert\geq 1$. For $F=F^{a},F^{b}$  we show that
\begin{equation}\label{estdisp}
 \big\Vert e^{it\DD}F\big\Vert_{L^{\infty}_{x,y}}  \lesssim\vert t\vert^{-\frac{1}{2}}  \big\Vert F\big\Vert_{S}. 
\end{equation}
We write 
\begin{equation*}
F(x,y)=\sum_{q,j} F_{q,j}(x)\psi_{q,j}(y),
\end{equation*}
thus 
\begin{equation*}\label{series}
e^{it\DD}F(x,y)=\sum_{q,j}e^{it\lambda_q}e^{-it\partial^2_x} F_{q,j}(x)\psi_{q,j}(y).
\end{equation*}
Then, we use the basic dispersive bound for the $1d$ Schr\"odinger equation and~\eqref{L11} to get 
 \begin{equation*} 
 \big\Vert e^{-it\partial^2_x} F_{q,j}\big\Vert_{L^{\infty}_{x}} 
\lesssim\vert t\vert^{-\frac{1}{2}} \big\Vert  F_{q,j}\big\Vert_{L^{1}_{x}}
\lesssim\vert t\vert^{-\frac{1}{2}}
 \big\Vert  F_{q,j}\big\Vert^{1/2}_{L^{2}_{x}} \big\Vert x F_{q,j}\big\Vert^{1/2}_{L^{2}_{x}} \,.
\end{equation*}
By  Thangavelu/Karadzhov (see~\cite[Lemma 3.5]{PRT1}), for all $d\leq 4$ and $q\geq 1$ 
\begin{equation*} 
  \sup_{y\in \R} \sum_{j} | \psi_{q,j}\big(y\big)\big|^{2}\leq C q.
\end{equation*}
 By~\eqref{series}, the previous lines and Cauchy-Schwarz, we get
 \begin{eqnarray*}
  \big\Vert e^{it\DD}F(.,y)\big\Vert_{L^{\infty}_{x}}&  \lesssim&\sum_{q,j}  \big\Vert e^{-it\partial^2_x} F_{q,j}\big\Vert_{L^{\infty}_{x}} \vert \psi_{q,j}(y)\vert \\
  &  \lesssim&\sum_{q} \big(\sum_{j}  \big\Vert e^{-it\partial^2_x} F_{q,j}\big\Vert^2_{L^{\infty}_{x}}\big)^{1/2}   \big(\sum_{j}  \vert \psi_{q,j}(y)\vert^2 \big)^{1/2}\\
    &  \lesssim& \vert t\vert^{-\frac{1}{2}} \sum_{q} q^{1/2}\big(\sum_{j}   \big\Vert  F_{q,j}\big\Vert_{L^{2}_{x}} \big\Vert x F_{q,j}\big\Vert_{L^{2}_{x}}\big)^{1/2}   \\
        &  \lesssim& \vert t\vert^{-\frac{1}{2}} \sum_{q} q^{1/2}\big(\sum_{j}   \big\Vert  F_{q,j}\big\Vert^2_{L^{2}_{x}}\big)^{1/4}  \big(\sum_j \big\Vert x F_{q,j}\big\Vert^2_{L^{2}_{x}}\big)^{1/4}.  
 \end{eqnarray*}
  Now, for $\beta>1$, and Cauchy-Schwarz again 
 \begin{eqnarray}\label{estFt}
  \big\Vert e^{it\DD}F\big\Vert_{L^{\infty}_{x,y}}&  \lesssim&  \vert t\vert^{-\frac{1}{2}} \sum_{q} q^{-\beta/2}\big(\sum_{j}   q^{2(\beta+1)}\big\Vert  F_{q,j}\big\Vert^2_{L^{2}_{x}}\big)^{1/4}  \big(\sum_j \big\Vert x F_{q,j}\big\Vert^2_{L^{2}_{x}}\big)^{1/4}  \nonumber\\
  &  \lesssim& \vert t\vert^{-\frac{1}{2}}    \big\Vert F\big\Vert^{1/2}_{L^{2}_{x}\HH^{N}}    \big\Vert xF\big\Vert^{1/2}_{L^{2}_{x,y}} \lesssim \vert t\vert^{-\frac{1}{2}} \big\Vert F\big\Vert_{S},
 \end{eqnarray}
 for $N\geq 4$, which was the claim.  
 
$\bullet$ Assume $\vert t\vert\leq 1$, then by Sobolev, we clearly have $ \big\Vert e^{it\DD}F^{b}\big\Vert_{L^{\infty}_{x,y}}  \lesssim  \big\Vert F^{b}\big\Vert_{S}. $
\end{proof}
 
 Finally, let us recall the following result from~\cite[Lemma 7.5]{HPTV}
\begin{lemma}\label{CM}
Let $ \dis \frac{1}{p}=\frac{1}{q}+\frac{1}{r}+\frac{1}{s}$ with $ 1\leq p,q,r,s\le\infty$,
then
\begin{equation*}
\begin{split}
\big\Vert \int_{\mathbb{R}^3}e^{ix\xi}m(\xi,\eta,\kappa)\widehat{f}(\xi-\eta)\overline{\widehat{g}}(\xi-\eta-\kappa)\widehat{h}(\xi-\kappa)d\eta d\kappa d\xi\big\Vert_{L_{x}^p}\lesssim\Vert\mathcal{F}^{-1}m\Vert_{L^1(\mathbb{R}^3)}\Vert f\Vert_{L^q}\Vert g\Vert_{L^r}\Vert h\Vert_{L^s}.
\end{split}
\end{equation*}
\end{lemma}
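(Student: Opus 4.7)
The plan is to reduce the oscillatory trilinear integral to a superposition of products of physical-space translates of $f$, $g$, $h$, after which Hölder's inequality under the exponent constraint $\tfrac{1}{p}=\tfrac{1}{q}+\tfrac{1}{r}+\tfrac{1}{s}$ closes the estimate. This is essentially the standard proof of Coifman--Meyer multiplier bounds specialized to this three-variable symbol.

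First I would represent the multiplier in Fourier-adjoint form by writing
\[
m(\xi,\eta,\kappa)=\int_{\mathbb{R}^3} K(a,b,c)\,e^{i(a\xi+b\eta+c\kappa)}\,da\,db\,dc,\qquad K:=\mathcal{F}^{-1}m,
\]
and insert this representation into the defining integral. Next, I would change variables by $u=\xi-\eta$, $v=\xi-\eta-\kappa$, $w=\xi-\kappa$; a direct computation shows this has Jacobian $1$ and inverts to $\xi=u-v+w$, $\eta=w-v$, $\kappa=u-v$, so the three factors $\widehat f$, $\overline{\widehat g}$, $\widehat h$ depend on the disjoint variables $u,v,w$. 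Regrouping the exponentials and carrying out the three separate one-dimensional Fourier inversions (using the convention $f(y)=\int e^{iy\xi}\widehat f(\xi)\,d\xi$ fixed in the paper) yields the pointwise identity
\[
\int_{\mathbb{R}^3}e^{ix\xi}m(\xi,\eta,\kappa)\widehat{f}(\xi-\eta)\overline{\widehat{g}}(\xi-\eta-\kappa)\widehat{h}(\xi-\kappa)\,d\eta\,d\kappa\,d\xi
=\int_{\mathbb{R}^3} K(a,b,c)\,f(x{+}a{+}c)\,\overline{g(x{+}a{+}b{+}c)}\,h(x{+}a{+}b)\,da\,db\,dc.
\]

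Once this representation is in hand, taking the $L^p_x$-norm and applying Minkowski's inequality pulls $|K(a,b,c)|$ outside, leaving the $L^p_x$-norm of the product of three translates of $f,\bar g,h$ for each fixed $(a,b,c)$. Hölder's inequality with exponents $(q,r,s)$ bounds this product by $\|f\|_{L^q}\|g\|_{L^r}\|h\|_{L^s}$ uniformly in the shifts, and the remaining integral of $|K|$ over $\mathbb{R}^3$ produces exactly $\|\mathcal{F}^{-1}m\|_{L^1(\mathbb{R}^3)}$.

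The only substantive step is verifying the algebraic identity produced by the change of variables, that is, checking that after the substitution the exponentials factor as $e^{i(x+a+c)u}\cdot e^{-i(x+a+b+c)v}\cdot e^{i(x+a+b)w}$ so that the $u,v,w$-integrals separate. After that, the argument is a direct combination of Minkowski and Hölder, valid for the full range $1\le p,q,r,s\le\infty$ and uniform in every other parameter, which is precisely what later applications in the paper will require.
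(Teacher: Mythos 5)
Your proof is correct and takes essentially the same route as the paper's: both insert the Fourier representation $m=\int K(a,b,c)e^{i(a\xi+b\eta+c\kappa)}\,da\,db\,dc$ with $K=\mathcal{F}^{-1}m$, recognize the resulting expression as a superposition of products of translates of $f$, $\bar g$, $h$, and close with Minkowski and H\"older. The only cosmetic difference is bookkeeping: you make the change of variables $(\xi,\eta,\kappa)\mapsto(u,v,w)$ explicit to factor the exponentials, whereas the paper phrases the same computation via Parseval and shift operators $f_u$, $h_v$, but the resulting pointwise identity $\int K(a,b,c)\,f(x+a+c)\overline{g(x+a+b+c)}h(x+a+b)\,da\,db\,dc$ is the same.
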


\begin{proof}
Denote by $(t,u,v)$ the dual variables of $(\xi,\eta,\kappa)$ and for define $t\mapsto f_{u}(t)=f(t-u)$. Then by Parseval for all $x\in \R$
\begin{eqnarray*}
 I(x)&:=&\int_{\mathbb{R}^3}m(\xi,\eta,\kappa)e^{ix\xi}\widehat{f}(\xi-\eta)\overline{\widehat{g}}(\xi-\eta-\kappa)\widehat{h}(\xi-\kappa)d\eta d\kappa d\xi \\
 &=&\int_{\mathbb{R}^3} (\ov{\mathcal{F}^{-1}\ov{m})}(t,u,v) \Big[ \int_{\R}  e^{i\xi(x+t+u+v)}\Big( \int_{\R^2}\widehat{f_{u}}(\xi-\eta)\overline{\widehat{g}}(\xi-\eta-\kappa)\widehat{h_{v}}(\xi-\kappa) d\eta d\kappa\Big) d\xi\Big]dtdudv\\
 &=&\int_{\mathbb{R}^3} (\ov{\mathcal{F}^{-1}\ov{m})}(t,u,v) \big( ( f_{u}\,\ov{g}\,h_{v})(x+t+u+v)\big)dtdudv,
\end{eqnarray*}
where $\mathcal{F}^{-1}$ stands here for the Fourier transform all the variables. Thus by the H\"older inequality
\begin{eqnarray*}
 \|I\|_{L^{p}_{x}}&\leq &\int_{\mathbb{R}^3} |(\mathcal{F}^{-1}m)(t,u,v)| \star\| fgh\|_{L^{p}_{x}}dtdudv\\
 &=&      \Vert\mathcal{F}^{-1}m\Vert_{L^1(\mathbb{R}^3)}\Vert fgh\Vert_{L^p}\\
 &\leq &  \Vert\mathcal{F}^{-1}m\Vert_{L^1(\mathbb{R}^3)}\Vert f\Vert_{L^q}\Vert g\Vert_{L^r}\Vert h\Vert_{L^s},
\end{eqnarray*}
which was to be proved.
\end{proof}
 

\section{The resonant system}\label{sect3}
In this section, we study the asymptotic system~\eqref{RSS} in terms of its well-posedness and stability properties. All these properties are directly inherited from those of the resonant system~\eqref{RS}. We also obtain some bounds on the growth of solutions in the $S$ and $S^+$ norms. In the case $d=2$, we point out the relation to the (CR) equation derived in~\cite{FGH} and recall some of its stationary solutions~\cite{FGH, GHT1}. These solutions will give us interesting dynamics for the limit system~\eqref{RSS} and hence for~\eqref{CNLS}. 

\subsection{Conserved quantities and well-posedness} The resonant system~\eqref{RS} is Hamiltonian with an energy functional given by
\begin{equation}\label{RSHam}
Q[g]=\sum_{\substack{n_{1},n_{2},n_3,n_4 \in \N\\n_{1}-n_{2}+ n_3=n_4}} \int_{\R^d} g_{n_{1}}\ov{g_{n_{2}}}g_{n_3}\overline{g_{n_4}}\,dy
\end{equation}
under the standard symplectic structure on $L^2(\R^d)$ given by $ \Im \int f \bar g\, dy$. By noticing that the integral above vanishes unless $n_1-n_2+n_3-n_4$ is even (since $g_k$ has the same parity as $k$), we can rewrite $Q$ as:
\begin{eqnarray*}
Q[g]&=&\frac{2}{\pi}\sum_{\substack{n_{1},n_{2},n_3,n_4 \in \N}}\int_{-\pi/4}^{\pi/4} e^{2i\lambda (n_{1}-n_{2}+ n_3-n_4)}\int_{\R^d} g_{n_{1}}\ov{g_{n_{2}}}g_{n_3}\overline{g_{n_4}}\,dy \;d\lambda\\
&=&\frac{2}{\pi}\int_{-\pi/4}^{\pi/4}\int_{\R^d} |e^{i\lambda \mathcal H} g|^4 \,d\lambda\, dy\nonumber
=\frac{2}{\pi}\left\| e^{i\lambda \mathcal H} g\right\|^4_{L^4_{\lambda,y}([-\frac{\pi}{4}, \frac{\pi}{4}]\times \R^d)}.  
\end{eqnarray*}
Thus, the Hamiltonian of the resonant system is none other than the forth power of the $L^4$ Strichartz nom associated with the trapped Schr\"odinger equation $(i\partial_t +\mathcal H)u=0$. 
\medskip

Next, we notice that the resonant system~\eqref{RS} conserves mass and kinetic energy given respectively by:
\begin{equation*} 
M[g]:=\int_{\R^d} |g(y)|^2\, dy;\qquad K.E.[g]=\int_{\R^d} \HH_{d} g\, \bar g \, dx=\sum_{p}\lambda_p \|g_p\|_{L^2(\R^d)}^2=\|g\|_{\HH_y^1}^2.
\end{equation*}
This follows from noticing that 
\begin{align*}
\partial_t \Big(\sum_{p} a(p) \|g_p\|_{L^2}^2\Big)=&\Im \sum_{q-r+s=p}a(p) \int_{\R^d} g_{q} \overline{g_r}g_s \overline{g_p} \,dy\\
=&\frac 14 \Im  \sum_{q-r+s=p}\big(a(q)-a(r)+a(s)-a(p)\big)  \int_{\R^d} g_{q} \overline{g_r}g_s \overline{g_p} \,dy
\end{align*}
which vanishes if $a(p)$ is an affine function.
 This conservation of $\HH^1_y$ norm allows for the following global well-posedness result

\begin{lemma} 
Let $1\leq d\leq 4$. For any $ G(0)\in \HH^1_y(\R^{d})$, there exists a unique global solution $u\in \mathcal{C}^1\big(\mathbb{R}:\HH^1_y(\R^{d})\big)$ of~\eqref{RS}.
In addition, higher regularity is preserved in the sense that if $G(0)\in \HH^s_y(\R^{d})$, then the solution belongs to $\mathcal{C}^1(\mathbb{R}:\HH^s_y(\R^{d}))$.
\end{lemma}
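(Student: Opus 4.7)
The plan is to establish a trilinear estimate for the resonant nonlinearity $\mathcal T$ in $\mathcal H^s_y(\mathbb R^d)$, use it to produce a local-in-time solution by a contraction argument, and then extend globally using the conservation of mass and kinetic energy derived just above.

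\textbf{Step 1: Integral representation of $\mathcal T$.} Since the eigenvalues of $\mathcal H_d$ satisfy $\lambda_{n_1}-\lambda_{n_2}+\lambda_{n_3}-\lambda_n = 2(n_1-n_2+n_3-n)\in 2\mathbb Z$, the resonant constraint $n_1-n_2+n_3=n$ can be enforced by Fourier-averaging $e^{i\lambda\mathcal H_d}$ over a half-period. Exactly as in the computation displayed for the Hamiltonian $Q$, one obtains
\begin{equation*}
\mathcal T[f,g,h] \;=\; \frac{1}{\pi}\int_{-\pi/2}^{\pi/2} e^{-i\lambda\mathcal H_d}\Bigl[\bigl(e^{i\lambda\mathcal H_d}f\bigr)\,\overline{\bigl(e^{i\lambda\mathcal H_d}g\bigr)}\,\bigl(e^{i\lambda\mathcal H_d}h\bigr)\Bigr]\,d\lambda.
\end{equation*}
This is the key algebraic identity: it reduces every estimate on $\mathcal T$ to an estimate on a pointwise product of harmonic-oscillator Schr\"odinger flows, $\lambda$ being a bounded auxiliary time.

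\textbf{Step 2: Trilinear estimate in $\mathcal H^s_y$ for $s\ge 1$.} Since $\mathcal H_d^{s/2}$ commutes with $e^{-i\lambda\mathcal H_d}$, taking $\|\cdot\|_{\mathcal H_y^s}$ inside the $\lambda$-integral (Minkowski) gives
\begin{equation*}
\|\mathcal T[f,g,h]\|_{\mathcal H_y^s}\;\lesssim\;\int_{-\pi/2}^{\pi/2}\bigl\|\bigl(e^{i\lambda\mathcal H_d}f\bigr)\bigl(e^{i\lambda\mathcal H_d}g\bigr)\bigl(e^{i\lambda\mathcal H_d}h\bigr)\bigr\|_{\mathcal H_y^s}\,d\lambda.
\end{equation*}
Using the equivalence $\|u\|_{\mathcal H_y^s}\approx \|(-\Delta)^{s/2}u\|_{L^2_y}+\|\langle y\rangle^s u\|_{L^2_y}$ recalled in~\eqref{eq}, a fractional Leibniz rule and H\"older in $y$ reduce matters to estimates of the form $\|e^{i\lambda\mathcal H_d}w\|_{L^{p_i}_y}$ in admissible Lebesgue exponents. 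For $1\le d\le 4$ the Sobolev embedding $\mathcal H_y^1\hookrightarrow L^q_y$ (with $q<\infty$ unrestricted if $d\le 2$, and $q=2d/(d-2)$ if $d=3,4$), together with Strichartz-type bounds for $e^{i\lambda\mathcal H_d}$ on the compact interval $[-\pi/2,\pi/2]$, yields
\begin{equation*}
\|\mathcal T[f,g,h]\|_{\mathcal H_y^s}\;\lesssim\;\|f\|_{\mathcal H_y^s}\|g\|_{\mathcal H_y^s}\|h\|_{\mathcal H_y^s},\qquad s\ge 1.
\end{equation*}
When $s>1$ the same proof gives the tame bound $\|\mathcal T[g,g,g]\|_{\mathcal H_y^s}\lesssim \|g\|_{\mathcal H_y^1}^2\|g\|_{\mathcal H_y^s}$.

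\textbf{Step 3: From local to global.} The trilinear estimate feeds a standard Picard iteration in $\mathcal C([-T,T];\mathcal H_y^s)$, producing a unique local solution on a time interval with $T\gtrsim \|G(0)\|_{\mathcal H_y^s}^{-2}$; the equation itself then gives $G\in\mathcal C^1$. Global existence at the level $s=1$ follows from the conservation of mass and kinetic energy established just before the lemma, which bound $\|G(t)\|_{\mathcal H_y^1}$ uniformly in $t$ and prevent the iteration time from shrinking. Persistence of higher regularity then follows from the tame estimate and Gronwall, yielding at worst $\|G(t)\|_{\mathcal H_y^s}\lesssim \|G(0)\|_{\mathcal H_y^s}\exp(C\|G(0)\|_{\mathcal H_y^1}^2|t|)$, hence global existence in $\mathcal H^s_y$.

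\textbf{Main obstacle.} The delicate point is the trilinear $\mathcal H_y^1$ estimate in the borderline dimension $d=4$, where cubic NLS is energy-critical and a pure Sobolev embedding into $L^\infty_y$ is unavailable. The integral representation circumvents this: it replaces a forbidden $L^\infty_y$ bound by a gain from the $\lambda$-averaging combined with Strichartz for $e^{i\lambda\mathcal H_d}$, which is precisely where the hypothesis $d\le 4$ is used.
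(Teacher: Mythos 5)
Your proof is correct and follows essentially the same route as the paper: you use the same Fourier-averaging identity expressing $\mathcal T$ as a time-average of the trapped flow $e^{i\lambda\HH_d}$ (the paper writes this dually, in~\eqref{95}), the same Strichartz estimates~\eqref{Stri0} to close the trilinear bound for $d\le4$, and the same conservation-law globalization. The only presentational difference is that the paper first proves the $L^2$ estimate~\eqref{StriEst2} and then upgrades to $\HH^\sigma$ via the transfer Lemma~\ref{ControlSS+}, whereas you estimate directly in $\HH^s$ after Minkowski.
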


This result follows from the standard arguments combining the conservation laws with estimate~\eqref{StriEst} below, which gives local well-posedness of~\eqref{RSS} in all Sobolev spaces $H^\sigma(\R^d)$ for all $\sigma\geq 1$ and $1\leq d\leq 4$.

\begin{lemma}\label{Lem32}
Let $\TT$ be defined by~\eqref{RS}. For every function $G_{1}, G_{2}$ and $G_{3}$ we have
\begin{equation}\label{StriEst2}
\Vert \TT[G_{1},G_{2},G_3]\Vert_{L^2_y}\lesssim 
\min_{\tau\in\mathfrak{S}_3}\Vert G_{\tau(1)}\Vert_{L^2_y}\Vert G_{\tau(2)}\Vert_{\HH^{1}_{y}}\Vert G_{\tau(3)}\Vert_{\HH^1_y},
\end{equation}
and consequently, for any $\sigma\geq 0$,
\begin{equation}\label{StriEst}
\Vert \TT[G_{1},G_{2},G_3]\Vert_{\HH^\sigma_y}\lesssim 
\sum_{\tau\in\mathfrak{S}_3}\Vert G_{\tau(1)}\Vert_{\HH^\sigma_y}\Vert G_{\tau(2)}\Vert_{\HH^1_y}\Vert G_{\tau(3)}\Vert_{\HH^1_y}.
\end{equation}
\end{lemma}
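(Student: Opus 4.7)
The plan is to exploit the identity
$$\mathcal{T}[f,g,h] = \frac{2}{\pi}\int_{-\pi/4}^{\pi/4} e^{-i\lambda\mathcal{H}_d}\Big[(e^{i\lambda\mathcal{H}_d}f)\,\overline{(e^{i\lambda\mathcal{H}_d}g)}\,(e^{i\lambda\mathcal{H}_d}h)\Big]\,d\lambda,$$
which is the trilinear polarization of the representation the authors already used in \eqref{RSHam} to rewrite $Q[g]$ as a Strichartz-type $L^4$ norm. To verify it I would expand in Hermite modes and compute the time integral $\int_{-\pi/4}^{\pi/4}e^{2i\lambda(n_1-n_2+n_3-n)}d\lambda$: it equals $\pi/2$ on $\Gamma_0$, vanishes whenever $n_1-n_2+n_3-n$ is a nonzero even integer, and Hermite parity of the integrand (an element of $E_n$ has parity $(-1)^n$) kills the remaining (odd) contributions, so only the resonant tuples survive and one recovers precisely the defining sum of $\mathcal{T}$.

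Estimate \eqref{StriEst2} then follows at once. The $L^2_y$-unitarity of $e^{-i\lambda\mathcal{H}_d}$ together with Minkowski give
$$\|\mathcal{T}[G_1,G_2,G_3]\|_{L^2_y}\lesssim \int_{-\pi/4}^{\pi/4}\Big\|\prod_{j=1}^{3} e^{i\lambda\mathcal{H}_d}G_j\Big\|_{L^2_y}\,d\lambda.$$
I would then apply H\"older in $y$ with exponent triple $(2,4,4)$, placing whichever slot $G_{\tau(1)}$ one wants in $L^2$ and exploiting $L^2$-isometry there; on the other two slots the Sobolev embedding $\mathcal{H}^1_y\hookrightarrow L^4_y$ -- which holds precisely up to $d=4$ through the weight/derivative equivalence \eqref{eq} -- combined with the $\mathcal{H}^1_y$-isometry of the harmonic flow supplies the $\|G\|_{\mathcal{H}^1_y}$ factors. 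Because $[-\pi/4,\pi/4]$ is bounded, no Strichartz exponent in $\lambda$ is needed and the integration costs only $O(1)$. Cycling the H\"older triple through $(4,2,4)$ and $(4,4,2)$ realizes the $\min_\tau$ in the statement.

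For the consequence \eqref{StriEst} I would commute $\mathcal{H}^{\sigma/2}_d$ past $e^{-i\lambda\mathcal{H}_d}$ in the identity above and then apply a fractional Leibniz rule for $\mathcal{H}^{\sigma/2}_d$ to distribute the derivative onto one factor at a time; such a rule follows from Kato-Ponce for $(-\Delta)^{\sigma/2}$ combined with the trivial product rule for the pointwise multiplier $\langle y\rangle^\sigma$, stitched together via \eqref{eq}. Each of the three resulting terms is bounded by \eqref{StriEst2} with $\mathcal{H}^{\sigma/2}$ placed on a different slot (and Sobolev embedding on the remaining two $\mathcal{H}^1$ factors), yielding the symmetric sum over $\tau\in\mathfrak{S}_3$. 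The main technical subtlety lies in making the Leibniz step rigorous on the harmonic-oscillator Sobolev scale rather than a pure $H^\sigma$; everything else amounts to H\"older, Sobolev embedding, and the bounded $\lambda$-integration.
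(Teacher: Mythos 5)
Your proof of \eqref{StriEst2} is correct, and it actually takes a more elementary route than the paper does. The trilinear polarization identity
\[
\mathcal{T}[f,g,h]=\frac{2}{\pi}\int_{-\pi/4}^{\pi/4} e^{-i\lambda\HH_d}\Big[(e^{i\lambda\HH_d}f)\,\overline{(e^{i\lambda\HH_d}g)}\,(e^{i\lambda\HH_d}h)\Big]\,d\lambda
\]
is valid (the Hermite-parity argument disposing of the odd $n_1-n_2+n_3-n$ is exactly as you describe), after which Minkowski, H\"older with exponents $(2,4,4)$, the $L^2_y$ and $\HH^1_y$ unitarity of $e^{i\lambda\HH_d}$, and the embedding $\HH^1_y\hookrightarrow L^4_y$ (valid for $d\le 4$) give the result. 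The paper instead tests $\mathcal{T}[G_1,G_2,G_3]$ against $G_0\in L^2$, writes the pairing as a space--time integral over $[0,\pi]\times\R^d$, and uses the Strichartz estimates $\|e^{it\HH_d}G\|_{L^2_{[0,\pi]}L^4_y}\lesssim\|G\|_{L^2_y}$ and $\|e^{it\HH_d}G\|_{L^4_{[0,\pi]}L^8_y}\lesssim\|G\|_{\HH^1_y}$. Both proofs work for $d\le 4$; yours dispenses with harmonic-oscillator Strichartz entirely and relies only on unitarity plus Sobolev.

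However, your proposed argument for \eqref{StriEst} has a real gap. After pulling $\HH_d^{\sigma/2}$ through $e^{-i\lambda\HH_d}$ you are reduced to bounding $\|\HH_d^{\sigma/2}(u_1\overline{u_2}u_3)\|_{L^2_y}$ at each fixed $\lambda$, and you want to apply Kato--Ponce there. But the exponent bookkeeping does not close: to recover $\|G_{\tau(1)}\|_{\HH^\sigma_y}$ by unitarity you must place the derivative slot in $L^2_y$, forcing the other two slots into $L^\infty_y$, and $\HH^1_y(\R^d)\not\hookrightarrow L^\infty_y(\R^d)$ for $d\ge 2$; if you instead put the derivative slot in $L^4_y$ the remaining slots need $L^8_y$, i.e.\ $d\le 8/3$. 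Since the lemma is asserted for $d\le 4$, a pointwise-in-$\lambda$ Kato--Ponce cannot deliver \eqref{StriEst}. There is a second, more structural issue: fractional Leibniz produces main terms \emph{plus commutator remainders} that are not themselves of the form $\mathcal{T}[\,\cdot,\cdot,\cdot\,]$, so you are not entitled to the step ``each of the three resulting terms is bounded by \eqref{StriEst2} with $\HH^{\sigma/2}$ placed on a different slot.''

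The paper's actual mechanism avoids both problems: on the resonant set $\Gamma_0$ one has $\lambda_p=\lambda_q-\lambda_r+\lambda_s$ \emph{exactly}, which gives the \emph{exact} Leibniz identity
\[
\HH\,\mathcal{T}[f,g,h]=\mathcal{T}[\HH f,g,h]-\mathcal{T}[f,\HH g,h]+\mathcal{T}[f,g,\HH h]
\]
(this is equation~\eqref{HandT}). Feeding this, together with \eqref{StriEst2}, into the Littlewood--Paley/Schur-test machinery of the transfer principle (Lemma~\ref{ControlSS+}) yields \eqref{StriEst}: one localizes the output to dyadic spectral blocks, uses the identity to place the extra $\sigma$ derivatives on the highest-frequency input, and then applies \eqref{StriEst2} blockwise with the other two factors still measured in $\HH^1_y$. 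Because the derivative is transferred at the level of the trilinear operator itself (not via a product rule on $u_1\overline{u_2}u_3$), no $L^\infty$ or $L^8$ embedding is ever needed.
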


\begin{proof}[Proof of Lemma~\ref{Lem32}]

One can deduce~\eqref{StriEst} from~\eqref{StriEst2} using Lemma~\ref{ControlSS+}.

We prove~\eqref{StriEst2}. We start by recalling the Strichartz  estimates for the harmonic oscillator (see~\cite{Poiret} for a proof).  A couple $(q,r)\in [2,+\infty]^2$ is called admissible if 
\begin{equation*}
\frac2q+\frac{d}{r}=\frac{d}2\quad \text{and}\quad (d,q,r)\neq (2,2,+\infty),
\end{equation*}
and if one defines 
\begin{equation*}
 {X}^s:= \bigcap_{(q,r) \ admissible} L ^q\big( [0,T] \,; \WW^{s,r}( \R^d )\big),
\end{equation*}
then for all $T>0$ there exists $C>0$ so that for all $G\in \HH^{s}(\R^{d})$ we have 
\begin{equation}\label{Stri0}
\|e^{-itH}G\|_{X^{s}}\leq C\|G\|_{\HH^{s}(\R^{d}).}
\end{equation}

Now, let  $G_{0}\in L^{2}(\R^{d})$, and define $u_{j}(t,y)=e^{it\HH_{d}}G_{j}(y)$, then we have
\begin{equation}\label{95}
  \<    \TT[G_{1},G_{2},G_3], G_{0}  \>=\frac1{\pi}\int_{0}^{\pi}\int_{\R^{d}}u_{1}(t,y)\overline{u_{2}(t,y)}u_3(t,y)\overline{u_{0}(t,y)}\,dydt\,.
\end{equation}
By~\eqref{Stri0}, we get that for $d\leq 4$, $\|e^{it\HH_{d}} G\|_{L^{2}_{[0,\pi]}L_{y}^{4}}\leq C \|G\|_{L_{y}^{2}}$.  Next, for $d\leq 4$ we have the Sobolev embedding $\mathcal{W}^{1,8/3}(\R^{d})\subset L^{8}(\R^{d})$, and by~\eqref{Stri0} we have 
\begin{equation*}
 \|e^{it\HH_{d}} G\|_{L^{4}_{[0,\pi]}L_{y}^{8}}\leq C \|e^{it\HH_{d}} G\|_{L^{4}_{[0,\pi]}\WW_{y}^{1,8/3}}\leq C\|  G\|_{\HH_{y}^{1}}.
\end{equation*}
Now from~\eqref{95},  H\"older and the previous estimates we get 
\begin{eqnarray*}
\big|  \<    \TT[G_{1},G_{2},G_3], G_{0}  \> \big| &\leq& C \|u_{0}\|_{L^{\infty}_{[0,\pi]}L_{y}^{2}}\|u_{1}\|_{L^{2}_{[0,\pi]}L_{y}^{4}}\|u_{2}\|_{L^{4}_{[0,\pi]}L_{y}^{8}}\|u_{3}\|_{L^{4}_{[0,\pi]}L_{y}^{8}}\\
&\leq &C\Vert G_{0}\Vert_{L^2_y} \Vert G_{1}\Vert_{L^2_y}\Vert G_{2}\Vert_{\HH^{1}_{y}}\Vert G_{3}\Vert_{\HH^1_y},
\end{eqnarray*}
which was the claim.
\end{proof}

As a consequence of the above lemma, the corresponding estimates for the limit system follow directly.

\begin{corollary} 
Let $\RR$ be defined by~\eqref{DefOfR}. Then for every function $G_{1}, G_{2}$ and $G_{3}$  the following estimates hold true
\begin{align}
\Vert \RR[G_{1},G_{2},G_3]\Vert_{L^2_{x,y}}\;\lesssim\;& 
\min_{\tau\in\mathfrak{S}_3}\Vert G_{\tau(1)}\Vert_{L^2_{x,y}}\Vert G_{\tau(2)}\Vert_{Z}\Vert G_{\tau(3)}\Vert_{Z},\label{a0}\\[5pt]
\Vert \RR[G_{1},G_{2},G_3]\Vert_{Z}\;\lesssim\;& 
 \Vert G_{1}\Vert_{Z}\Vert G_{2}\Vert_{Z}\Vert G_{3}\Vert_{Z},\nonumber\\[5pt]
 \Vert \RR[G_{1},G_{2},G_3]\Vert_{S}\;\lesssim\;& 
\max_{\tau\in\mathfrak{S}_3}\Vert G_{\tau(1)}\Vert_{Z}\Vert G_{\tau(2)}\Vert_{Z}\Vert G_{\tau(3)}\Vert_{S}.\label{a2}
\end{align}
\end{corollary}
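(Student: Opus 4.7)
The crucial structural observation is that the partial Fourier transform in $x$ turns $\RR$ into a fiberwise application of $\TT$: by~\eqref{DefOfR},
\begin{equation*}
\mathcal{F}_x\RR[G_1,G_2,G_3](\xi,\cdot) \;=\; \TT[\widehat{G_1}(\xi,\cdot),\widehat{G_2}(\xi,\cdot),\widehat{G_3}(\xi,\cdot)]\qquad\text{for each }\xi\in\R.
\end{equation*}
All three inequalities thus reduce to applying Lemma~\ref{Lem32} at a fixed $\xi$, followed either by Plancherel in $x$ or by a pointwise supremum over $\xi$, together with the weighted pointwise bound $\Vert\widehat{G}(\xi)\Vert_{\HH^1_y}\leq [1+|\xi|^2]^{-1}\Vert G\Vert_Z$ built into the definition of the $Z$-norm.

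For~\eqref{a0}, I would combine Plancherel with~\eqref{StriEst2}: for any permutation $\tau\in\mathfrak{S}_3$,
\begin{equation*}
\Vert\RR\Vert_{L^2_{x,y}}^2 \;\approx\;\int\Vert\TT[\widehat{G_{\tau(1)}}(\xi),\widehat{G_{\tau(2)}}(\xi),\widehat{G_{\tau(3)}}(\xi)]\Vert_{L^2_y}^2\,d\xi \;\lesssim\; \Vert G_{\tau(2)}\Vert_Z^2\,\Vert G_{\tau(3)}\Vert_Z^2\int\Vert\widehat{G_{\tau(1)}}(\xi)\Vert_{L^2_y}^2\,d\xi,
\end{equation*}
which gives~\eqref{a0} after taking the minimum over $\tau$. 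For the $Z$-norm estimate, a fiberwise application of~\eqref{StriEst} with $\sigma=1$ yields $\Vert\mathcal{F}_x\RR(\xi)\Vert_{\HH^1_y}\lesssim [1+|\xi|^2]^{-3}\prod_j\Vert G_j\Vert_Z$; multiplying by $[1+|\xi|^2]^2$ and taking the supremum in $\xi$ leaves a surplus of $[1+|\xi|^2]^{-1}\leq 1$.

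For~\eqref{a2}, I would decompose $\Vert F\Vert_S\lesssim\Vert\langle\xi\rangle^N\widehat{F}\Vert_{L^2_\xi L^2_y}+\Vert\widehat{F}\Vert_{L^2_\xi\HH^N_y}+\Vert\partial_\xi\widehat{F}\Vert_{L^2_\xi L^2_y}$, using Plancherel and the equivalence~\eqref{Full Sobolev2}. For the first piece, apply~\eqref{StriEst2} fiberwise with the permutation that places the distinguished $S$-controlled factor, say $G_{\tau(3)}$, in the $L^2_y$ slot of $\TT$; the other two factors absorb all $\xi$-weights through $[1+|\xi|^2]^{-2}\Vert G_{\tau(1)}\Vert_Z\Vert G_{\tau(2)}\Vert_Z$, and integrating in $\xi$ leaves $\Vert G_{\tau(3)}\Vert_{H^N_x L^2_y}\leq\Vert G_{\tau(3)}\Vert_S$. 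The second piece is identical, using~\eqref{StriEst} with $\sigma=N$ and placing $\widehat{G_{\tau(3)}}(\xi)$ in the $\HH^N_y$ slot. For the third, (conjugate-)linearity of $\TT$ in each argument gives
\begin{equation*}
\partial_\xi\widehat{\RR}(\xi) \;=\; \TT[\partial_\xi\widehat{G_1},\widehat{G_2},\widehat{G_3}]+\TT[\widehat{G_1},\partial_\xi\widehat{G_2},\widehat{G_3}]+\TT[\widehat{G_1},\widehat{G_2},\partial_\xi\widehat{G_3}],
\end{equation*}
and applying~\eqref{StriEst2} to each summand with the $\partial_\xi$-factor in the $L^2_y$ slot yields, after $\xi$-integration, a bound of the form $\Vert xG_j\Vert_{L^2_{x,y}}\Vert G_k\Vert_Z\Vert G_\ell\Vert_Z\leq\Vert G_j\Vert_S\Vert G_k\Vert_Z\Vert G_\ell\Vert_Z$. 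Summing the three contributions produces the bound~\eqref{a2}. The whole argument is essentially mechanical once the fiberwise identity is exploited; the only care required is the correct choice of permutation in each sub-estimate so that the heavier regularity ($H^N_x$, $\HH^N_y$, or $\partial_\xi$) sits on the single $S$-controlled factor while the other two are absorbed by the $Z$-norm's built-in quadratic decay in $\xi$.
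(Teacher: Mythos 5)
Your proof is correct and is precisely the fiberwise reduction that the paper leaves implicit: the paper states the corollary with only the remark that the estimates "follow directly" from Lemma~\ref{Lem32}, and your spelled-out argument — Plancherel in $x$, applying~\eqref{StriEst2} or~\eqref{StriEst} at fixed $\xi$, exploiting the built-in $[1+|\xi|^2]^{-1}$ decay in the $Z$-norm, and distributing $\partial_\xi$ by linearity of $\TT$ for the weighted part of the $S$-norm — is exactly how that corollary is meant to be derived.
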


We are now able to state a result concerning the long time behaviour and the stability of  the asymptotic system~\eqref{RSS}. We refer to~\cite[Lemma 4.3]{HPTV} for the proof.

 \begin{lemma} \label{lem34} The notation $S^{\star}$   stands     either for  $S$ or $S^+$.
\begin{enumerate}[(1)]
\item  Assume that $G_0\in S^{\star}$ and that $G$ evolves according to~\eqref{RSS}. Then, there holds that, for $t\geq 1$,
\begin{equation*} 
\begin{split}
\Vert G(\ln t)\Vert_{Z}&=\Vert G_0\Vert_{Z}\\[3pt]
\Vert G(\ln t)\Vert_{S^{\star}}&\lesssim (1+\vert t\vert)^{\delta^\prime}\Vert G_0\Vert_{S^{\star}}.
\end{split}
\end{equation*}
Besides, we may choose $\delta^\prime\lesssim \Vert G_0\Vert_{Z}^2$.
\item  In addition, we have the following uniform continuity result:
if $A$ and $B$ solve~\eqref{RSS} and satisfy
\begin{equation*}
\sup_{0\leq t\leq T}\left\{\Vert A(t)\Vert_{Z}+\Vert B(t)\Vert_{Z}\right\}\leq \theta
\end{equation*}
and
\begin{equation*}
\Vert A(0)-B(0)\Vert_{S^{\star}}\leq \delta
\end{equation*}
then, there holds that, for $0\leq t\leq T$,
\begin{equation}\label{stab}
\Vert A(t)-B(t)\Vert_{S^{\star}}\leq \delta e^{C\theta^2 t}. 
\end{equation}
\end{enumerate}
\end{lemma}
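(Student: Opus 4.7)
The key observation is that the limit system~\eqref{RSS} Fourier-decouples in $x$: since
\begin{equation*}
\mathcal{F}_x \mathcal{R}[G,G,G](\xi,\cdot)=\mathcal{T}[\widehat G(\xi,\cdot),\widehat G(\xi,\cdot),\widehat G(\xi,\cdot)],
\end{equation*}
the profile $\tau\mapsto \widehat G(\tau,\xi,\cdot)$ evolves, for each fixed $\xi$, under the resonant system $i\partial_\tau f=\mathcal T[f,f,f]$ on $\R^d$. This reduces both parts of the lemma to estimates for~\eqref{RS} together with the trilinear bounds~\eqref{a0}--\eqref{a2}.

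\textbf{Conservation of the $Z$ norm.} The calculation preceding Lemma~3.2 shows that the resonant system on $\R^d$ conserves the kinetic energy $\|f\|_{\mathcal H^1_y}^2$. Applied pointwise in $\xi$, this gives
\begin{equation*}
\|\widehat G(\tau,\xi,\cdot)\|_{\mathcal H^1_y}=\|\widehat{G_0}(\xi,\cdot)\|_{\mathcal H^1_y},
\end{equation*}
and multiplying by $[1+|\xi|^2]^2$ and taking the supremum in $\xi$ yields $\|G(\tau)\|_Z=\|G_0\|_Z$.

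\textbf{Growth in $S^\star$.} From~\eqref{a2} we have $\|\mathcal{R}[G,G,G]\|_S\lesssim \|G\|_Z^2\|G\|_S$. To upgrade this to $S^+$, I would observe that $\mathcal{R}$ acts as a pointwise multiplier in $\xi$ and commutes with $\partial_x$, so $\partial_x^{2k}$ simply distributes by Leibniz; for the weight $xF\leftrightarrow i\partial_\xi \widehat F$, a Leibniz rule in $\xi$ gives
\begin{equation*}
x\,\mathcal{R}[G,G,G]=\mathcal{R}[xG,G,G]+\mathcal{R}[G,xG,G]+\mathcal{R}[G,G,xG],
\end{equation*}
and each term is controlled by~\eqref{a2}. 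Combining these yields
\begin{equation*}
\|\mathcal{R}[G,G,G]\|_{S^\star}\lesssim \|G\|_Z^2\|G\|_{S^\star}.
\end{equation*}
Feeding this into the equation $i\partial_\tau G=\kappa_0\mathcal R[G,G,G]$ and invoking the conservation $\|G(\tau)\|_Z=\|G_0\|_Z$ gives $\tfrac{d}{d\tau}\|G(\tau)\|_{S^\star}\lesssim \|G_0\|_Z^2\,\|G(\tau)\|_{S^\star}$, and Gronwall produces
\begin{equation*}
\|G(\tau)\|_{S^\star}\leq \|G_0\|_{S^\star}\exp\bigl(C\|G_0\|_Z^2\,\tau\bigr).
\end{equation*}
Setting $\tau=\ln t$ gives the announced polynomial growth with $\delta'\lesssim \|G_0\|_Z^2$.

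\textbf{Stability.} Set $W=A-B$. By trilinearity of $\mathcal R$,
\begin{equation*}
i\partial_t W=\mathcal R[W,A,A]+\mathcal R[B,W,A]+\mathcal R[B,B,W].
\end{equation*}
Applying the $S^\star$ analogue of~\eqref{a2}, choosing each time the permutation that places $W$ in the $S^\star$ slot and $A,B$ in the $Z$ slots, gives
\begin{equation*}
\|\partial_t W(t)\|_{S^\star}\lesssim \bigl(\|A(t)\|_Z+\|B(t)\|_Z\bigr)^2\|W(t)\|_{S^\star}\leq C\theta^2\|W(t)\|_{S^\star}.
\end{equation*}
Gronwall then yields $\|W(t)\|_{S^\star}\leq \|W(0)\|_{S^\star}\,e^{C\theta^2 t}$, which is~\eqref{stab}.

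The only nontrivial step is the extension of the trilinear bound from $S$ to $S^+$, and in particular handling the $x$-weight; all other ingredients are the conservation laws of~\eqref{RS} combined with the pointwise-in-$\xi$ structure of $\mathcal R$.
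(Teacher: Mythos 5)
The overall strategy — exploit the $\xi$-pointwise structure of~\eqref{RSS}, conserve the $\mathcal H^1_y$ norm of $\widehat G(\xi,\cdot)$ to get $Z$-conservation, then close a Gronwall argument in $S$ and $S^\star$ using the trilinear bounds — is the right one, and it is what~\cite[Lemma 4.3]{HPTV} (to which the paper defers) does. The $Z$-conservation step and the $S$-growth step $\|\mathcal R[G,G,G]\|_S\lesssim\|G\|_Z^2\|G\|_S$ followed by Gronwall are both correct.

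The difficulty is in the $S^+$ upgrade and in the stability estimate, and there your argument has a genuine gap: you repeatedly apply~\eqref{a2} as if it allowed you to choose which slot carries the strong norm. It does not. Estimate~\eqref{a2} (and the transfer lemma it comes from, cf.~\eqref{CS}) is a \emph{max-over-permutations} bound, because in the Littlewood--Paley decomposition the $H^N$-derivatives land on the highest-frequency factor, whichever it is. Concretely, after the Leibniz step $x\mathcal R[G,G,G]=\mathcal R[xG,G,G]-\mathcal R[G,xG,G]+\mathcal R[G,G,xG]$ (note also the missing sign on the conjugate slot, cf.~\eqref{HandT}), applying~\eqref{a2} to $\mathcal R[xG,G,G]$ yields a bound by $\max\{\|G\|_Z^2\|xG\|_S,\ \|xG\|_Z\|G\|_Z\|G\|_S\}$. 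The second term forces you to estimate $\|xG\|_Z$, which is \emph{not} controlled by $\|G\|_Z$; one only has $\|xF\|_Z\lesssim T^{-\delta}\|F\|_{S^+}+T^{2\delta}\|F\|_S$ (used in the paper around~\eqref{esti2}). This is why the paper's own trilinear bound~\eqref{esti2} for $\mathcal R$ in $S^+$ contains the additional term $\max_\sigma\|F^{\sigma(a)}\|_{\tilde Z}\|F^{\sigma(b)}\|_S\|F^{\sigma(c)}\|_S$: your clean bound $\|\mathcal R[G,G,G]\|_{S^+}\lesssim\|G\|_Z^2\|G\|_{S^+}$ is not available.

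The same issue undermines the stability step: for $\|\mathcal R[W,A,A]\|_S$, the max in~\eqref{a2} includes $\|W\|_Z\|A\|_Z\|A\|_S$, and $\|A\|_S$ is not controlled by $\theta$ (only $\|A\|_Z$ is). So $\partial_t\|W\|_{S^\star}\lesssim\theta^2\|W\|_{S^\star}$ does not follow from~\eqref{a2} alone, and the Gronwall constant you write down is not justified. The actual proof must work harder — e.g.\ exploit the $\xi$-decoupling and the conservation $\|\widehat A(\tau,\xi)\|_{\mathcal H^1_y}=\|\widehat A(0,\xi)\|_{\mathcal H^1_y}\leq(1+|\xi|^2)^{-2}\theta$ pointwise in $\xi$, so that the quadratic coefficient multiplying the strong norm is genuinely $\theta^2$, and separately track the contribution where the $\mathcal H^N_y$ or $\xi$-weight lands on $A,B$ — as is done in~\cite[Lemma 4.3]{HPTV}.
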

  
\subsection{Special dynamics of the resonant system}\label{SpeDyn}

In view of Theorem~\ref{ExMWO}, virtually any global dynamic that can isolated for~\eqref{RS} can be transferred to~\eqref{CNLS}. To transfer information from a global solution $f(t)$ of~\eqref{RS} to a solution of~\eqref{RSS}, all one needs to do is take an initial data of the form
\begin{equation*}
G_0(x,y)=\varepsilon \check{\varphi}(x)f(0,y), \qquad \varphi \in \mathcal S(\R).
\end{equation*}
The solution $G(t)$ to~\eqref{RSS} with initial data $G_0$ as above is given in Fourier space by 
$$\widehat G(t, \xi,y)=\varepsilon \varphi(\xi)f(\varepsilon^2\varphi^2(\xi) t, y).$$
In particular, if $\varphi=1$ on an open interval $I$, then $\widehat G(t, \xi,y)=\varepsilon f(\varepsilon^2 t, y)$ for all $t\in \R$ and $\xi \in I$.

\subsection{Relation to the (CR) equation}\label{sect33} When $d=2$, the resonant system~\eqref{RS} is, up to multiplicative constant, the (CR) equation derived in~\cite{FGH} and further studied in~\cite{GHT1, GHT2}. The (CR) equation (standing for {\it continuous resonant}) can be formally derived by starting with the cubic nonlinear Schr\"odinger equation on a periodic box of size $L$ and performing the following two limits\footnote{These limits are motivated in by standard {\it weak turbulence closures}.}: 
\begin{enumerate}
\item {\it Small nonlinearity limit} which amounts to reducing to the resonant dynamics of the cubic NLS equation,
\item {\it Large-box limit} ($L\to \infty$), also known as the infinite volume approximation.
\end{enumerate}
More precisely, if one considers the equation
 \begin{equation} \label{cubicNLS} 
  \left\{
      \begin{aligned}
      &i\partial_t v +\Delta v=\pm  |v|^2v, \quad x\in \T^2_L=[0,L]\times[0, L], \\
     & v(0,x)= \varepsilon v_0(x),\qquad \varepsilon \ll 1,
      \end{aligned}
    \right.
\end{equation}
then in the regime $0<\epsilon < L^{-1-\delta}$ (for any $\delta>0$), the nonlinear dynamics of $\widehat v(t, K)$ can be approximated, over long nonlinear time scales, by $g(t, K)$ where $g(t, \xi): \R_t \times \R_\xi^2 \to \C$ solves the following equation:
\begin{equation}\label{star}
\begin{split}
i\partial_t g(t,\xi)=&\pi^2 \mathcal T(g,g,g)(t,\xi),\qquad \xi \in \R^2,\\
\mathcal T(g,g,g)(t,\xi)=&\frac{1}{\pi^2} \int_{\R}\int_{\R^2} g(t,\xi+\lambda z) \overline{g}(t,\xi+\lambda z+z^\perp)g(t,\xi+z^\perp) \,dz\,d\lambda.
\end{split}
\end{equation}

For the cubic NLS equation on $\T^2$ ($L=2\pi$), equation~\eqref{star} can also be understood as the equation of the high frequency envelopes (profiles) of the NLS solution (cf.~\cite[Theorem~1.2]{FGH}). The rigorous derivation and precise approximation results are contained in~\cite[Theorems 1.1,~1.2]{FGH}. Equation~\eqref{star} has several remarkable properties, of which we mention the following:
\begin{enumerate}
\item {\it Hamiltonian structure:} The equation~\eqref{star} is Hamiltonian and derives from the functional given by the $L^4_{t, x}$ Strichartz norm on $\R^2$ 
$$
E(g)= \iint_{\R\times \R^2} |e^{it\Delta_{\R^2}} g|^4 \, dx\,dt.
$$
\item {\it Invariance of the Fourier transform:} in the sense that if $g(t)$ is a solution of~\eqref{star}, then so is $\widehat g(t)=\mathcal F_{\R^2} g(t)$.
\item {\it Invariance of harmonic oscillator eigenspaces:} The equation~\eqref{star} leaves invariant the eigenspaces of the quantum-harmonic oscillator $-\Delta+|y|^2$.
\item {\it Stationary solutions:} The equation~\eqref{star} admits explicit stationary solutions at each energy level of the harmonic oscillator eigenspaces. In particular,  for all $n\geq 0$, there exists $\lambda_n\in \R$ such that $g(t,y)= e^{i \lambda_n t}  (y_{1}\pm i y_{2})^ne^{-\frac{1}{2}|y|^2}$ solves~\eqref{star} explicitly.
\item \label{pt5}{\it  Quasi-periodic solutions with two frequencies:}  One can describe explicitly the dynamics of the eigenspace $E_{1}=\operatorname{Span}\{  (y_{1}+i y_{2})e^{-\frac{1}{2}|y|^2},  (y_{1}-i y_{2})e^{-\frac{1}{2}|y|^2}\}$. Actually, every solution~$g$ of~\eqref{star} can be written as
\begin{equation*}
g(t,y)=c_+e^{i\lambda_+ t }(y_{1}+i y_{2})e^{-\frac{1}{2}|y|^2}  + c_{-} e^{i\lambda_- t}  (y_{1}-i y_{2})e^{-\frac{1}{2}|y|^2},
\end{equation*}
with $\lambda_+=-\frac{\pi }{4}(\vert c_+\vert^2 +  2\vert c_{-}\vert^2)$ and $\lambda_-=-\frac{\pi }{4}(2\vert c_+\vert^2 +  \vert c_{-}\vert^2)$. Therefore, we can choose the initial conditions such $\lambda_+/\lambda_-=r$ for any $r\in ]0,1[$ (cf.~\cite{GHT1}).
\end{enumerate}
 
\begin{remark}
The fact that the resonant system~\eqref{RS} is the same as~\eqref{star} was observed in~\cite{GHT1}. It is easily seen by noticing that the two systems have equal Hamiltonians (up to constants): recall the definition \eqref{RSHam}, then 
$$
Q[g]=\frac{2}{\pi}\int_{-\pi/4}^{\pi/4}\int_{\R^d} |e^{i\lambda \mathcal H} g|^4 \,d\lambda\, dy=\frac{2}{\pi}\iint_{\R\times \R^2} |e^{it\Delta_{\R^2}} g|^4 \, dx\,dt,
$$
an identity that follows using the Lens Transform~\eqref{LensTransform} and changing variable $x\to \frac{x}{\sqrt{1+4t^2}}$, $ t \to \frac{1}{2}\arctan 2t$.
\end{remark}
\begin{proof}[Proof of Corollary~\ref{coro}] $(1)$: Let $g$ be a solution to~\eqref{RS} such that $\|\partial_{t}g\|_{L^{2}(\R^{2})}\longrightarrow 0$ when $t\longrightarrow +\infty$, then by the conservation of $\|g\|_{L^{2}}$ we get that 
\begin{equation*}
E(g)=\<i \partial_{t}g,g\>_{L^{2}}\longrightarrow 0, \quad \text{when}\quad t\longrightarrow+\infty,
\end{equation*}
then implies that $g\equiv 0$.\\
$(2)$: It is a direct consequence of Theorem~\ref{ExMWO} and point~\eqref{pt5} above.
\end{proof}

\section{Structure of the nonlinearity}\label{sect4}
The purpose of this section is to extract the key effective interactions from the full nonlinearity in~\eqref{CNLS}. Our main result is the following
\begin{proposition}\label{StrucNon}
Decompose the nonlinearity
\begin{equation*} 
\mathcal{N}^{t}[F,G,H]=\frac{\pi}{t}\mathcal{R}[F, G,H]+\mathcal{E}^{t}[F,G,H]\\
\end{equation*}
where $\mathcal{R}$ is given in~\eqref{DefOfR}. 
Assume that for $T^\ast\geq 1$,  $F$, $G$, $H$: $\mathbb{R}\to S$ satisfy
\begin{equation}\label{leq1}
\Vert F\Vert_{X_{T^\ast}}+\Vert G\Vert_{X_{T^\ast}}+\Vert H\Vert_{X_{T^\ast}}\leq 1.
\end{equation}
Then we can write
$$
\mathcal{E}^{t}[F(t),G(t),H(t)]=\mathcal{E}_{1}^{t}[F(t),G(t),H(t)]+\mathcal{E}_{2}^{t}[F(t),G(t),H(t)],
$$
and if for $i=1,2$ we note $\mathcal{E}_i(t):=\mathcal{E}_i^{t}[F(t),G(t),H(t)]$ then the following estimates hold uniformly in $T^\ast\geq 1$,
\begin{equation*} 
\begin{split}
\sup_{1\leq T\leq T^\ast}T^{-\delta}\Vert \int_{T/2}^T\mathcal{E}_i(t)dt\Vert_{S}\lesssim 1,\quad i=1,2,\\
\sup_{1\leq t\leq T^\ast}(1+\vert t\vert)^{1+\delta}\Vert \mathcal{E}_{1}(t)\Vert_{Z}\lesssim 1,\\
\sup_{1\leq t\leq T^\ast}(1+\vert t\vert)^{1/10}\Vert \mathcal{E}_3(t)\Vert_{S}\lesssim 1,
\end{split}
\end{equation*}
where $\mathcal{E}_{2}(t)=\partial_t\mathcal{E}_3(t)$.
Assuming in addition
\begin{equation}\label{BA+}
\Vert F\Vert_{X^+_{T^\ast}}+\Vert G\Vert_{X^+_{T^\ast}}+\Vert H\Vert_{X^+_{T^\ast}}\leq 1,
\end{equation}
we also have that
\begin{equation*} 
\sup_{1\leq T\leq T^\ast}T^{-5\delta}\Vert \int_{T/2}^T\mathcal{E}_i(t)dt\Vert_{S^+}\lesssim 1,\qquad \sup_{1\leq T\leq T^\ast} T^{2\delta}\Vert \int_{T/2}^T\mathcal{E}_i(t)dt\Vert_{S}\lesssim 1,\quad i=1,2.\\
\end{equation*}
\end{proposition}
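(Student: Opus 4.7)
The plan is to decompose $\mathcal{N}^{t}$ according to the resonance structure by writing $\mathcal{N}^{t} = \mathcal{N}^{t}_{nr} + \mathcal{N}^{t}_{nf}$, where $\mathcal{N}^{t}_{nr}$ collects all quadruples $(p,q,r,s) \in \Gamma_\omega$ with $\omega \neq 0$ and $\mathcal{N}^{t}_{nf}$ corresponds to $\omega = 0$. The non-resonant part will be converted into a time derivative (contributing $\mathcal{E}_2 = \partial_t \mathcal{E}_3$) plus integrable remainders (contributing to $\mathcal{E}_1$) via integration by parts in $t$, while the normal-form part will be compared to $\tfrac{\pi}{t}\mathcal{R}$ via stationary phase in $(\eta,\kappa)$ near the stationary point $\eta = \kappa = 0$.

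For the non-resonant part, I would use $e^{-it\omega} = -\tfrac{1}{i\omega}\partial_t e^{-it\omega}$ and integrate by parts in time. This produces a boundary-type term $\mathcal{E}_3$ of the schematic form $\tfrac{1}{i\omega} e^{-it\omega} \Pi_p\int e^{2it\eta\kappa} \widehat{F}_q \overline{\widehat{G}_r}\widehat{H}_s \, d\eta\, d\kappa$, and three remainder terms where $\partial_t$ falls on $F$, $G$, or $H$. The $\partial_t F$-type remainders are handled by Lemma~\ref{wup}-type trilinear estimates combined with the $(1+t)^{-1+3\delta}$ decay of $\|\partial_t F\|_S$ from the definition of $X_T$, summing in $\omega$ using the factor $\omega^{-1}$ and the regularity of $F,G,H$ along the harmonic oscillator eigenspaces (which comes from the $H^N_{x,y}$ component of the $S$ norm and the bound $\lambda_p = 2p+d$). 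The sum in $\omega$ converges because restricting to frequencies $\lesssim L$ in $y$ forces $|\omega| \lesssim L^2$, and the gain from the $S$-regularity dominates.

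The main obstacle, exactly as signaled in the informal discussion preceding the proposition, arises when $\partial_t$ hits the companion phase $e^{2it\eta\kappa}$: this produces a factor $2\eta\kappa/\omega$ which is unbounded when $|\eta\kappa|$ is large. To handle this, I would first cut with a smooth localization $\chi(\eta\kappa/T^\sigma)$ for some small $\sigma>0$: in the region $|\eta\kappa| \lesssim T^\sigma$ the factor $\eta\kappa/\omega$ is controlled and the preceding argument applies after a further integration by parts in $t$, while in the complementary region $|\eta\kappa| \gtrsim T^\sigma$ one exploits spatial oscillations through a bilinear Strichartz estimate for the free 1D Schrödinger group. Indeed, the integral $\int e^{2it\eta\kappa} \widehat{F}_q(\xi-\eta)\widehat{H}_s(\xi-\kappa)\, d\eta\, d\kappa$ is, up to harmless operations, a product $(e^{it\partial_x^2}F_q)(e^{-it\partial_x^2}H_s)$, and applying Lemma~\ref{CM} together with the 1D bilinear refinement of Strichartz yields additional polynomial decay in $T^\sigma$ when the frequency separation is large; this decay beats the $\omega^{-1}$ loss and gives the $(1+t)^{-1-\delta}$ bound on $\mathcal{E}_1$ in $Z$ as well as the integrated $S$ and $S^+$ bounds.

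For the normal-form piece with $\omega = 0$, stationary phase in $(\eta,\kappa)$ at the critical point $(0,0)$ yields the expansion
\begin{equation*}
\int_{\mathbb{R}^2} e^{2it\eta\kappa} A(\xi-\eta)\, B(\xi-\eta-\kappa)\, C(\xi-\kappa)\, d\eta\, d\kappa = \frac{\pi}{t} A(\xi)B(\xi)C(\xi) + \mathcal{O}(t^{-1-\rho})
\end{equation*}
for some $\rho > 0$ depending on the regularity of the integrand; after summing over $(p,q,r,s) \in \Gamma_0$ this identifies the principal term as $\tfrac{\pi}{t}\mathcal{R}[F,G,H]$ and places the error in $\mathcal{E}_1$. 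Quantitatively, I would write the difference as a kernel representation of the form $\int e^{2it\eta\kappa} m(\eta,\kappa) (\ldots) d\eta\, d\kappa$ where $m(\eta,\kappa)$ vanishes at the origin, then apply Lemma~\ref{CM} after estimating the appropriate $L^1$ Fourier-transform norm; the gain of a power of $t^{-\rho}$ comes from a standard dyadic decomposition in $|\eta\kappa|$ combined with the same bilinear Strichartz inputs as above. Finally, to verify the claimed inequalities: the $Z$-norm bound on $\mathcal{E}_1$ uses Lemma~\ref{ZweakerS} together with the trilinear $Z,S,S$ structure in the spirit of \eqref{a0}--\eqref{a2}; the integrated $S$ and $S^+$ bounds follow by summing the pointwise-in-$t$ estimates over $[T/2,T]$ and absorbing polynomial growth of $\|F\|_{S^{(+)}}$ from the $X_T$ and $X_T^+$ norms, the exponents $\delta, 5\delta, 2\delta$ being arranged so that the small constant $\delta<10^{-3}$ provides the necessary room.
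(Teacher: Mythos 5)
Your decomposition into a non-resonant ($\omega\neq 0$) piece to be treated by normal-form reduction and a resonant ($\omega=0$) piece to be compared to $\frac{\pi}{t}\mathcal{R}$ by stationary phase is the right skeleton, but several key ingredients of the paper's argument are missing and at least one of your proposed tools would not close.

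First, the paper begins with a preliminary reduction that you omit: one isolates the piece where at least one input has $x$-frequency $\geq T^{1/6}$ (Lemma~\ref{BilEf}) and shows it is directly acceptable, so that the remaining analysis of $\widetilde{\mathcal{N}^t}$ and $\mathcal{N}_0^t$ is carried out only for $Q_{\leq T^{1/6}}$-localized inputs. This is not a cosmetic step: both the $L^1$ bound on $\mathcal{F}^{-1}m$ needed in Lemma~\ref{CM} (with $m$ built from $\varphi(t^{1/4}\eta\kappa)$) and the stationary-phase comparison with $\frac{\pi}{t}\mathcal{R}$ require this frequency truncation. It is here, and only here, that bilinear Strichartz (Lemmas~\ref{1dBE},~\ref{ddBE}) is actually used in the paper, via a time-slicing of $[T/2,T]$ into intervals of length $T^{9/10}$.

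Second, in the region $|\eta\kappa|$ large (the piece $\mathcal{O}_1^t$), the paper does \emph{not} use bilinear Strichartz. Bilinear Strichartz gains from frequency separation between the factors and is a time-averaged estimate; neither feature is available for a fixed-$t$ bound on the region $|\eta\kappa|\gtrsim t^{-1/4}$ (the scale the paper actually uses, not $T^\sigma$ with $\sigma>0$ as you propose), where $\eta$ and $\kappa$ can be comparable in size. Instead the paper integrates by parts in $\kappa$ to exploit spatial oscillations, after splitting the inputs into close and far components $G=G_c+G_f$ with $G_c(x)=\varphi(x/t^{3/4})G(x)$ and using weighted decay as in~\eqref{pinguin}. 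The same close/far decomposition (at scale $t^{1/4}$) is essential to apply the stationary-phase Lemma~\ref{lemma3.10} in the $\omega=0$ comparison with $\frac{\pi}{t}\mathcal{R}$; you do not arrange for the compact-support hypothesis of that lemma.

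Third, your plan to sum in $\omega$ by trading $\omega^{-1}$ against smoothness in $y$ is less robust than the paper's device. The paper rewrites $\frac{1-e^{-it\omega}}{i\omega}=\int_{2\pi[t/2\pi]}^t e^{-i\tau\omega}d\tau$ and recognizes that $\sum_{\omega\neq 0}e^{-i\tau\omega}\mathcal{O}^t_{2,\omega}=P_{\omega\neq 0}\mathcal{O}^{t,\tau}_2$ is a well-defined operator, turning the $\omega$-sum into a $\tau$-integral over a bounded period; the summability issue disappears entirely. Finally, your $\eta\kappa$-cutoff scale is inverted: the normal-form reduction only controls the term where $\partial_t$ hits $e^{2it\eta\kappa}$ when $|\eta\kappa|\lesssim t^{-1/4}$ (so that the resulting factor $\eta\kappa$ is small), whereas you cut at a large scale $T^\sigma$, $\sigma>0$, and then need an unspecified further iteration of integration by parts; this part of the argument is not closed.
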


Actually, in most applications of Proposition~\ref{StrucNon} one can take $T^*=+\infty$. However, there is one place where it is convenient to apply the result with $T^*<+\infty$ : This is in Step 1 of Section~\ref{SMS}, where global existence of $F$ is obtained thanks to a continuity argument (instead of a fixed point in\;$X^+_\infty$). It is there that the argument is non-perturbative and one has to profit from a key cancellation. 
 \subsection{The high frequency estimates}
We start with an estimate bounding high frequencies in $x$. It uses essentially 
the bilinear Strichartz estimates on $\mathbb{R}$ (see Lemma~\ref{1dBE} and~\cite{CKSTTSIMA}).
\begin{lemma}\label{BilEf}
Assume that $ N\geq 8$. The following bounds hold uniformly in $T\geq1$

\begin{equation}\label{est1}
\big\Vert \sum_{\substack{A,B,C\\\max(A,B,C)\geq T^{\frac{1}{6}}}}\mathcal{N}^{t}[Q_{A}F,Q_{B}G,Q_CH]\big\Vert_{Z}\lesssim
 T^{-\frac{201}{200}}\Vert F\Vert_{S}\Vert G\Vert_{S}\Vert H\Vert_{S},\quad\forall t\geq T/4,
 \end{equation}
 \begin{multline}\label{est2}
\Big\Vert \sum_{\substack{A,B,C\\\max(A,B,C)\geq T^{\frac{1}{6}}}}\int_{T/2}^T\mathcal{N}^{t}[Q_{A}F(t),Q_{B}G(t),Q_CH(t)]dt\Big\Vert_{S}\lesssim \\
\lesssim  T^{-\frac{1}{50}}\Vert F\Vert_{X_T}\Vert G\Vert_{X_T}\Vert H\Vert_{X_T},\qquad
 \end{multline}
 \begin{multline}\label{est3}
\Big\Vert \sum_{\substack{A,B,C\\\max(A,B,C)\geq T^{\frac{1}{6}}}}\int_{T/2}^T\mathcal{N}^{t}[Q_{A}F(t),Q_{B}G(t),Q_CH(t)]dt\Big\Vert_{S^+}\lesssim \\
\lesssim T^{-\frac{1}{50}}\Vert F\Vert_{X_T^{+}}\Vert G\Vert_{X_T^{+}}\Vert H\Vert_{X_T^{+}}.\qquad
 \end{multline}
\end{lemma}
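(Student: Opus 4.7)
By the trilinear symmetry of $\mathcal{N}^t$ (invariance under permutations up to complex conjugation of one argument), we may assume throughout that $A=\max(A,B,C)\geq T^{1/6}$, so that $Q_A F$ is the factor carrying the high $x$-frequency. Expanding each input into Hermite eigenspaces, the high regularity $N\geq 8$ built into the $S$ norm affords free summation over the eigenspace indices after extracting Sobolev decay, reducing the problem to a trilinear estimate at fixed indices where the $y$-variable essentially decouples. For the integrated bounds \eqref{est2} and \eqref{est3}, we then pair against a test function $\chi\in L^2_{x,y}$ and unwind $\mathcal{N}^t = e^{-it\DD}(uvw)$ to obtain a spacetime integral
\[
\Big\langle \int_{T/2}^T \mathcal{N}^t\,dt,\chi\Big\rangle
=\int_{T/2}^T\!\!\int_{\R\times\R^d}(e^{it\DD}Q_A F)\,\overline{(e^{it\DD}Q_B G)}\,(e^{it\DD}Q_C H)\,\overline{(e^{it\DD}\chi)}\,dx\,dy\,dt.
\]
For the pointwise bound \eqref{est1}, I would instead use the characterization $\|F\|_Z\sim\sup_\xi(1+|\xi|^2)\|\widehat F(\xi)\|_{\mathcal H^1_y}$ directly in \eqref{Nintro}, reducing similarly to a bilinear product in $x$ at fixed frequencies $\xi$ and fixed Hermite indices.

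\textbf{Step 1: $y$-Strichartz and bilinear Strichartz in $x$.} Using the full admissible range of Strichartz estimates \eqref{Stri0} for $e^{it\HH_d}$ (available since $d\leq 4$) combined with Sobolev embedding in $y$, three of the four factors above are absorbed into mixed $L^q_tL^r_y$ norms controlled by the $S$ (or $S^+$) norms of the inputs. This reduces the matter to an $L^2_tL^2_x$ bilinear estimate involving $Q_A F$ and a second factor of $x$-frequency no larger than $A$. The bilinear Strichartz estimate for the 1d Schr\"odinger equation of~\cite{CKSTTSIMA} then gives
\[
\big\|e^{it\partial_x^2}f\cdot e^{\pm it\partial_x^2}g\big\|_{L^2_tL^2_x(\R\times\R)}\lesssim A^{-1/2}\|f\|_{L^2_x}\|g\|_{L^2_x}
\]
whenever $f$ has $x$-frequency $\sim A$ and $g$ has $x$-frequency $\lesssim A$, with the requirement $A\geq T^{1/6}$ producing a frequency gain $A^{-1/2}\leq T^{-1/12}$.

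\textbf{Step 2: time integration, summation, and the $S^+$ upgrade.} The $L^2_t$ output of the bilinear Strichartz already absorbs the time integration over $[T/2,T]$, so combined with the $T^{-1/12}$ gain this easily yields the $T^{-1/50}$ decay of \eqref{est2} after summing over the dyadic pieces. For the pointwise $Z$-bound \eqref{est1}, interpolating the bilinear gain with the dispersive $t^{-1}$ decay of Lemma~\ref{wup} and the $Z\lesssim L^2\cdot S$ interpolation of Lemma~\ref{ZweakerS} produces the desired $T^{-201/200}$. The dyadic sum $\sum_{A\geq T^{1/6}}$ converges by the Sobolev tail $\|Q_A F\|_{L^2}\lesssim A^{-N}\|F\|_{H^N}$ available since $N\geq 8$. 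For~\eqref{est3}, the weights $(1-\partial_{xx})^4$ in $S^+$ commute with $Q_A$ and are absorbed into one of the three factors, while the multiplication by $x$ is handled via the commutator bound $\|[Q_A,x]\|_{L^2\to L^2}\lesssim A^{-1}$ from \eqref{comm}, which is negligible for $A\geq T^{1/6}$.

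\textbf{Main obstacle.} The principal technical point is finding the correct pairing of factors in the bilinear Strichartz step, and this requires a case analysis of the frequency configurations $A\gg B\geq C$, $A\sim B\gg C$, and $A\sim B\sim C$: in the first two, the high-frequency $Q_A F$ pairs naturally with $Q_B G$ or $Q_C H$; in the last, it must pair with the dual $\chi$ after an auxiliary Littlewood--Paley decomposition of $\chi$, introducing a logarithmic loss easily absorbed by the $T^{-1/12}$ gain. The subtlety is in ensuring that the Hermite eigenspace summation in $y$ survives the bilinear coupling in $x$, which is precisely where the hypothesis $d\leq 4$, together with the full admissible range of harmonic-oscillator Strichartz and the Sobolev margin afforded by $N\geq 8$, becomes essential.
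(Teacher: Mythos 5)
There is a genuine gap in your treatment of both the fixed-time bound \eqref{est1} and the time-integrated bounds \eqref{est2}--\eqref{est3}.

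For \eqref{est1}, you propose to ``interpolate the bilinear gain with the dispersive $t^{-1}$ decay,'' but the bilinear Strichartz gain $\lambda^{-1/2}$ is an $L^2_{t,x}$-\emph{averaged} statement; it does not survive restriction to a fixed time slice, so it cannot enter a pointwise-in-$t$ estimate like \eqref{est1}. The paper instead treats \eqref{est1} by pure Sobolev: pairing against $K\in L^2_{x,y}$, using $L^6\times L^6 \times L^6\times L^2$ H\"older and $L^6(\R^{d+1})\hookleftarrow H^{5/3}$ (valid since $d+1\leq 5$), so that each $Q_A$ localization gives a factor $A^{5/3-8}$ when bounded in $H^8$; the resulting $(ABC)^{-19/3}$ decay is summable and, combined with $\max(A,B,C)\geq T^{1/6}$, yields a $T^{-21/20}$ bound in $L^2_{x,y}$ which is then upgraded to $Z$ via \eqref{ZSNorm} together with the $t^{-1}$ decay of Lemma~\ref{wup}. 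No bilinear Strichartz appears in this part.

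For \eqref{est2}--\eqref{est3}, your broad strategy (duality, bilinear Strichartz on the factor with the dominant $x$-frequency) is the right one, but you omit the step that makes it work: the inputs $F(t),G(t),H(t)$ are \emph{time-dependent}, while Lemmas~\ref{1dBE} and~\ref{ddBE} apply only to free evolutions $e^{it\DD}$ of a fixed profile. The paper handles this by partitioning $[T/4,2T]$ into $\#J\lesssim T^{1/10}$ subintervals of length $T^{9/10}$, replacing $F(t)$ by $F(t_j)$ on each, and controlling the committed error via the $\partial_t F$ component of the $X_T$ (resp.\ $X_T^+$) norm; only the frozen-time piece is estimated by bilinear Strichartz. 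This freezing step is essential and your proposal does not address it. You also treat the bilinear Strichartz as a separate ``$y$-Strichartz plus 1D bilinear in $x$'' argument, whereas the paper derives the needed estimate for $e^{it\DD}$ on $\R\times\R^d$ (Lemma~\ref{ddBE}) via the Lens transform \eqref{LensTransform}, reducing the harmonic-oscillator evolution in $y$ to a free Schr\"odinger flow and then applying Plancherel and Minkowski in $y$ before invoking the 1D bilinear estimate~\eqref{BS0}; a direct mixed $L^q_t L^r_y$ Strichartz argument in $y$ does not mesh cleanly with the $L^2_{t,x}$ bilinear structure in $x$. Finally, note that the paper's case split is $\mathrm{med}(A,B,C)\lessgtr T^{1/6}/16$ (the large-med case being elementary), not your three-way high-frequency comparison.
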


\begin{proof}
We prove~\eqref{est1}. Let $t\geq T/4$. By Lemma\;\ref{wup} and estimate~\eqref{ZSNorm} we only have to prove
\begin{equation*}
\Vert \sum_{\substack{A,B,C\\\max(A,B,C)\geq T^{\frac{1}{6}}}}\mathcal{N}^{t}[Q_{A}F,Q_{B}G,Q_CH]\Vert_{L^2_{x,y}}\lesssim T^{-\frac{21}{20}}
\Vert F\Vert_{S}\Vert G\Vert_{S}\Vert H\Vert_{S}.
\end{equation*}
We proceed by duality. Let $K\in L^2_{x,y}$, then by~\eqref{defN}
\begin{multline}
\<\sum_{\substack{A,B,C\\\max(A,B,C)\geq T^{\frac{1}{6}}}}\mathcal{N}^{t}[Q_{A}F,Q_{B}G,Q_CH],K\>_{L^2_{x,y}}=\\
=\sum_{\substack{A,B,C\\\max(A,B,C)\geq T^{\frac{1}{6}}}}   \int_{\R^{1+d}}e^{it \DD}(Q_{A} F)\overline{e^{it \DD}(Q_{B} G)}e^{it \DD}(Q_C H)\overline{e^{it \DD}K}dxdy.\label{sum}
\end{multline}
Next, by Sobolev ($\Vert K\Vert_{L^{6}_{x,y}}\leq C\Vert K\Vert_{H_{x,y}^{5/3}}$ since $d+1\leq 5$), one obtains
\begin{multline*}
\big\vert  \int_{\R^{1+d}}e^{it \DD}(Q_{A} F)\overline{e^{it \DD}(Q_{B} G)}e^{it \DD}(Q_C H)\overline{e^{it \DD}K}dxdy\big\vert \lesssim \\
\begin{aligned}
&\lesssim (ABC)^{-6-1/3} \Vert Q_{A} F\Vert_{H^{8}_{x,y}}\Vert Q_{B} G\Vert_{H^{8}_{x,y}}\Vert Q_C H\Vert_{H^{8}_{x,y}} \Vert K\Vert_{L^{2}_{x,y}},
\end{aligned}
\end{multline*}
and by summing up in~\eqref{sum} we get the result.

We now turn to~\eqref{est2}. Here we can follow the proof of~\cite[Lemma 3.2]{HPTV}. Denote by $\hbox{med}(A,B,C)$ the second largest dyadic number among $(A,B,C)$ and define the set $\Lambda$ of the     $(A,B,C)$ such that   $\hbox{med}(A,B,C)\leq T^\frac{1}{6}/16$ and $\max(A,B,C)\geq T^{\frac{1}{6}}$. The case when $(A, B, C) \notin \Lambda$ is treated exactly as in~\cite{HPTV}, so we don't reproduce the same argument. 

We consider a decomposition 
\begin{equation*} 
[T/4,2T]=\bigcup_{j\in J} I_j,\quad I_j=[jT^\frac{9}{10},(j+1)T^\frac{9}{10}]=[t_j,t_{j+1}],\quad \#J\lesssim T^{\frac{1}{10}}
\end{equation*}
and consider $\chi\in \mathcal{C}^\infty_c(\mathbb{R})$, $\chi\geq 0$ such that $\chi(x)=0$ if $\vert x\vert\geq 2$ and
\begin{equation*}
\sum_{k\in\mathbb{Z}}\chi(x-k)\equiv 1.
\end{equation*}
Then observe that the  left hand-side of~\eqref{est2} can be estimated by $C(E_{1}+E_{2})$, where
\begin{multline*}
E_{1}:=\Big\| \sum_{j\in J} \sum_{(A,B,C)\in \Lambda}\int_{T/2}^T\chi\big(\frac{t}{T^{\frac{9}{10}}}-j\big)\cdot\\
\cdot\Big(\mathcal{N}^{t}[Q_{A}F(t),Q_{B}G(t),Q_CH(t)]-\mathcal{N}^{t}[Q_{A}F(t_j),Q_{B}G(t_j),Q_CH(t_j)]\Big)dt\Big\|_{S}
\end{multline*}
and
\begin{equation*}
E_{2}:= \Big\|\sum_{j\in J} \sum_{(A,B,C)\in \Lambda}
\int_{T/2}^T\chi\big(\frac{t}{T^{\frac{9}{10}}}-j\big)\mathcal{N}^{t}[Q_{A}F(t_j),Q_{B}G(t_j),Q_CH(t_j)]dt\Big\|_{S}\,.
\end{equation*}
The term $E_{1}$ is estimated as in~\cite[Lemma 3.2]{HPTV}, and gives the expected contribution. For $E_{2}$ we write 
\begin{equation*}
E_{2} \leq \sum_{j\in J}\sum_{(A,B,C)\in \Lambda}E^{A,B,C}_{2,J},
\end{equation*}
with 
\begin{equation*}
 \Big\| \int_{T/2}^T\chi\big(\frac{t}{T^{\frac{9}{10}}}-j\big)\mathcal{N}^{t}[Q_{A}F^{a}(t_j),Q_{B}F^{b}(t_j),Q_CF^{c}(t_j)]dt\ \Big\|_{S}\,.
\end{equation*}
By Lemma~\ref{ControlSS+}, we only have to estimate 
\begin{equation*}
 \Big\| \int_{T/2}^T\chi\big(\frac{t}{T^{\frac{9}{10}}}-j\big)\mathcal{N}^{t}[Q_{A}F^{a}(t_j),Q_{B}F^{b}(t_j),Q_CF^{c}(t_j)]dt\ \Big\|_{L^{2}_{x,y}}\,,
\end{equation*}
and this will be done by   duality. Let $K\in L^2_{x,y}$, we consider
\begin{equation*}
\begin{split}
I_K&=\langle K, \int_{T/2}^T\chi\big(\frac{t}{T^{\frac{9}{10}}}-j\big)\mathcal{N}^{t}[Q_{A}F^{a}(t_j),Q_{B}F^{b}(t_j),Q_CF^{c}(t_j)]dt\rangle_{L^2_{x,y}\times L^2_{x,y}}\\
&=    \int_{T/2}^T  \int_{\R^{d+1}} \chi\big(\frac{t}{T^{\frac{9}{10}}}-j\big)e^{it \DD}(Q_{A} F^{a})\overline{e^{it \DD}(Q_{B} F^{b})}e^{it \DD}(Q_C F^{c})\overline{e^{it \DD}K}dxdydt,
\end{split}
\end{equation*}
where we may assume that $K=Q_DK$, $D\simeq \max(A,B,C)$. Using Lemma~\ref{ddBE}, we can estimate
\begin{equation*} 
I_K\lesssim D^{-1}
 \min_{\sigma\in\mathfrak{S}_3}\Vert Q_{\s(A)}F^{\sigma(a)}\Vert_{L^2_{x,y}}\Vert Q_{\s(B)}  F^{\sigma(B)}\Vert_{S}\Vert Q_{\s(C)}F^{\sigma(c)}\Vert_{S}.
 \end{equation*}
 
 The proof of~\eqref{est3} is similar.
\end{proof}

Let us recall the following result proved in~\cite{CKSTTSIMA}. We also reproduce the proof for the reader's convenience.
\begin{lemma}\label{1dBE}
Denote by $Q_{\lambda}$ the frequency localization in $x$ and assume that $\lambda\geq 10 \mu\geq 1$.
Then,  for all $f,g\in L^2(\R)$ we have the bound
\begin{equation}\label{BS0}
\big\Vert
\big(e^{it\partial_{xx}} Q_{\lambda}f \big)\big(e^{it\partial_{xx}}{Q_{\mu} g}\big)
\big\Vert_{L^2_{t,x}(\mathbb{R}\times\mathbb{R})}\lesssim \lambda^{-\frac{1}{2}}\Vert f\Vert_{L^2_x(\mathbb{R})}\Vert g\Vert_{L^2_x(\mathbb{R})}.
\end{equation}
\end{lemma}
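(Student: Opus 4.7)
The plan is to prove this via Plancherel in space-time, followed by a change of variables that exploits the frequency separation. Since only the free $1$D Schrödinger propagator is involved, I will work directly on the Fourier side.

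First I would write
\begin{equation*}
e^{it\partial_{xx}}Q_\lambda f(x)=\int_{\mathbb{R}} e^{ix\xi-it\xi^2}\varphi(\xi/\lambda)\widehat{f}(\xi)\,d\xi, \qquad e^{it\partial_{xx}}Q_\mu g(x)=\int_{\mathbb{R}} e^{ix\eta-it\eta^2}\phi(\eta/\mu)\widehat{g}(\eta)\,d\eta,
\end{equation*}
so that the product $F(t,x):=(e^{it\partial_{xx}}Q_\lambda f)(e^{it\partial_{xx}}Q_\mu g)$ has space-time Fourier transform
\begin{equation*}
\widetilde{F}(\zeta,\tau)=(2\pi)^{2}\iint \delta(\zeta-\xi-\eta)\,\delta(\tau-\xi^{2}-\eta^{2})\,\varphi(\xi/\lambda)\phi(\eta/\mu)\widehat{f}(\xi)\widehat{g}(\eta)\,d\xi\,d\eta.
\end{equation*}

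Next, I would perform the change of variables $(\xi,\eta)\mapsto(\zeta,\tau)=(\xi+\eta,\xi^{2}+\eta^{2})$, whose Jacobian is $2|\xi-\eta|$. This map is in general two-to-one (the two preimages correspond to swapping $\xi$ and $\eta$), but since $|\xi|\sim\lambda\geq 10\mu\gtrsim |\eta|$ on the support of the symbols, only one branch contributes and in particular $|\xi-\eta|\gtrsim \lambda$. Integrating the $\delta$-functions thus gives
\begin{equation*}
\widetilde{F}(\zeta,\tau)=\frac{(2\pi)^{2}}{2|\xi-\eta|}\,\varphi(\xi/\lambda)\phi(\eta/\mu)\widehat{f}(\xi)\widehat{g}(\eta),
\end{equation*}
where $(\xi,\eta)$ is the unique solution in the support.

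Finally, by Plancherel in $(t,x)$ and by changing variables back using $d\zeta\,d\tau = 2|\xi-\eta|\,d\xi\,d\eta$, I obtain
\begin{equation*}
\|F\|_{L^{2}_{t,x}}^{2}=\iint |\widetilde{F}(\zeta,\tau)|^{2}\,d\zeta\,d\tau \;\lesssim\; \iint \frac{|\varphi(\xi/\lambda)\phi(\eta/\mu)\widehat{f}(\xi)\widehat{g}(\eta)|^{2}}{|\xi-\eta|}\,d\xi\,d\eta\;\lesssim\;\lambda^{-1}\|f\|_{L^{2}}^{2}\|g\|_{L^{2}}^{2},
\end{equation*}
which is the desired bound after taking square roots. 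There is no real obstacle here; the only point that needs care is checking that the two-to-one degeneracy of $(\xi,\eta)\mapsto(\xi+\eta,\xi^{2}+\eta^{2})$ is broken by the frequency separation $\lambda\geq 10\mu$, so that the Jacobian factor $|\xi-\eta|$ is genuinely comparable to $\lambda$ on the relevant support.
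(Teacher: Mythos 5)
Your proof is correct and takes essentially the same route as the paper's: both rest on Plancherel in $(t,x)$ together with the change of variables $(\xi,\eta)\mapsto(\xi+\eta,\pm(\xi^2+\eta^2))$, whose Jacobian $2|\xi-\eta|\gtrsim\lambda$ is controlled by the frequency separation. The paper argues by duality against a test function $F\in L^2_{t,x}$ followed by Cauchy--Schwarz, while you compute $\|uv\|_{L^2_{t,x}}^2$ directly through the distributional space-time Fourier transform --- a cosmetic distinction (modulo a harmless sign: with your phase convention the second delta should read $\delta(\tau+\xi^2+\eta^2)$, which changes nothing in the Jacobian or the estimate).
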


\begin{proof}
Define $u(t)=e^{it\partial_{xx}}f $ and $v(t)=e^{it\partial_{xx}} g $ and assume that $Q_{\lambda}f=f$ and $Q_{\mu}g=g$. We proceed by duality. Let $F\in L^{2}(\R^{2})$, then by Parseval
\begin{equation*}
\<uv,F\>_{L^{2}_{t,x}}= \int_{\R^{2}} uv \ov{F}dxdt=\int_{\R^{2}}(\widehat{u}\star \widehat{v} )(t,\xi)\ov{\widehat{F}}(t,\xi)d\xi dt,
\end{equation*}
where $\wh{F}$ stands for the Fourier transform in $x$. Then if we denote by $\widetilde{F}$ the Fourier transform in $(t,x)$ we get with Cauchy-Schwarz
\begin{eqnarray*}
\<uv,F\>_{L^{2}_{t,x}}&=& \int_{\R^{3}} e^{-it(\xi^{2}_{1}+\xi^{2}_{2})} \wh{f}(\xi_{1}) \wh{g}(\xi_{2}) \ov{\widehat{F}}(t,\xi_{1}+\xi_{2})d\xi_{1}d\xi_{2} dt\\
&=&\int_{\R^{2}} \wh{f}(\xi_{1}) \wh{g}(\xi_{2}) \ov{\widetilde{{F}}}(-\xi^{2}_{1}-\xi^{2}_{2},\xi_{1}+\xi_{2})d\xi_{1}d\xi_{2}  \\
&\leq& \|f\|_{L^{2}(\R)} \|g\|_{L^{2}(\R)} \big( \int_{\substack{|\xi_{1}|\sim \lambda\\ {|\xi_{2}| \ll \lambda}}} |\ov{\widetilde{{F}}}(-\xi^{2}_{1}-\xi^{2}_{2},\xi_{1}+\xi_{2})|^{2}d\xi_{1}d\xi_{2} \big)^{1/2}.
\end{eqnarray*}
Now we make the change of variables $s=\xi_{1}+\xi_{2}$ and $r=-\xi^{2}_{1}-\xi^{2}_{2}$. The Jacobian of this transformation is $2|\xi_{1}-\xi_{2}|\sim \lambda$, hence
\begin{equation*}
\<uv,F\>_{L^{2}_{t,x}} \leq C \lambda^{-1/2}  \|f\|_{L^{2}(\R)} \|g\|_{L^{2}(\R)} \|\widetilde{F}\|_{L^{2}(\R^{2})},
\end{equation*}
which was the claim.
\end{proof}
\begin{lemma}\label{ddBE}
Denote by $Q_{\lambda}$ the frequency localization in $x$. Assume that $\lambda\geq 10 \mu\geq 1$ and that $u(t)=e^{it\DD}u_0$, $v(t)=e^{it\DD}v_0$. Then for $d\leq 4$ we have the bound
\begin{equation}\label{BS}
\Vert
Q_{\lambda}u{Q_{\mu} v}
\Vert_{L^2_{t,x,y}(\mathbb{R}\times\mathbb{R}^{1+d})}\lesssim \lambda^{-\frac{1}{2}}\min\big(\Vert u_0\Vert_{L^2_{x,y}(\mathbb{R}^{1+d})}\Vert v_0\Vert_{L^{2}_{x}\HH^{3}_{y}},\Vert v_0\Vert_{L^2_{x,y}(\mathbb{R}^{1+d})}\Vert u_0\Vert_{L^{2}_{x}\HH^{3}_{y}}\big).
\end{equation}
\end{lemma}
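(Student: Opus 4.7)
\medskip

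\noindent\textbf{Proof plan for Lemma~\ref{ddBE}.} The key algebraic observation is that $\DD = -\partial_x^2 + \HH_d$ is a sum of two commuting operators acting on separate variables, so $e^{it\DD} = e^{-it\partial_x^2} e^{it\HH_d}$. The plan is to use a Hermite decomposition in $y$ to reduce the bilinear $e^{it\DD}$--estimate to a sum of 1D bilinear Schr\"odinger estimates, where Lemma~\ref{1dBE} provides the decisive $\lambda^{-1/2}$ gain.

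\medskip

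Concretely, expand the initial data as $u_0(x,y) = \sum_{n,j} U_{n,j}(x)\psi_{n,j}(y)$ and $v_0(x,y) = \sum_{m,k} V_{m,k}(x)\psi_{m,k}(y)$. Since the 1D harmonic oscillator just multiplies each Hermite mode by $e^{it\lambda_n}$, we obtain
\begin{equation*}
u(t,x,y) = \sum_{n,j} e^{it\lambda_n}\,A_{n,j}(t,x)\,\psi_{n,j}(y), \qquad A_{n,j}(t,x) := e^{-it\partial_x^2}U_{n,j}(x),
\end{equation*}
and likewise for $v$ with $B_{m,k}:=e^{-it\partial_x^2}V_{m,k}$. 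Note that the Fourier projector $Q_\lambda$ acts only in $x$ and therefore commutes with the Hermite sum.

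\medskip

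Next, for each fixed $(t,x)$ apply H\"older in $y$ and Sobolev embedding to estimate
\begin{equation*}
\bigl\|Q_\lambda u(t,x,\cdot)\,Q_\mu v(t,x,\cdot)\bigr\|_{L^2_y}
\;\leq\; \bigl\|Q_\lambda u(t,x,\cdot)\bigr\|_{L^2_y}\,\bigl\|Q_\mu v(t,x,\cdot)\bigr\|_{L^\infty_y}
\;\lesssim\; \bigl\|Q_\lambda u(t,x,\cdot)\bigr\|_{L^2_y}\,\bigl\|Q_\mu v(t,x,\cdot)\bigr\|_{\HH^3_y},
\end{equation*}
where the last step uses $\HH^s_y \hookrightarrow L^\infty_y$ for $s > d/2$ (which holds for $s=3$ since $d\leq 4$), via the equivalence~\eqref{eq}. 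Squaring and integrating in $(t,x)$, and expanding both $L^2_y$ and $\HH^3_y$ norms in the Hermite basis (the phases $e^{it\lambda_n}$ have modulus one and disappear), the right-hand side becomes
\begin{equation*}
\iint \sum_{n,j} \bigl|Q_\lambda A_{n,j}(t,x)\bigr|^2 \;\sum_{m,k} (1+\lambda_m)^3 \bigl|Q_\mu B_{m,k}(t,x)\bigr|^2 \,dx\,dt.
\end{equation*}

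\medskip

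The final step is to interchange the sums with the $L^2_{t,x}$ integration and apply the 1D bilinear Strichartz estimate~\eqref{BS0} to each pair $(Q_\lambda A_{n,j}, Q_\mu B_{m,k})$. Since $A_{n,j}$ and $B_{m,k}$ are free 1D Schr\"odinger evolutions of $U_{n,j}$ and $V_{m,k}$, we get
\begin{equation*}
\bigl\|Q_\lambda A_{n,j}\cdot Q_\mu B_{m,k}\bigr\|_{L^2_{t,x}}^2 \;\lesssim\; \lambda^{-1}\|U_{n,j}\|_{L^2_x}^2\,\|V_{m,k}\|_{L^2_x}^2,
\end{equation*}
and summing in $(n,j,m,k)$ reconstructs exactly $\lambda^{-1}\|u_0\|_{L^2_{x,y}}^2\,\|v_0\|_{L^2_x\HH^3_y}^2$. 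The symmetric bound with the roles of $u_0$ and $v_0$ reversed is obtained by swapping which factor is estimated in $L^\infty_y$. There is no real obstacle here; the minor point to watch is that the diagonal phases $e^{it(\lambda_n+\lambda_m)}$ from the two Hermite expansions have modulus one and so do not affect the 1D bilinear estimate, which does not involve complex conjugation.
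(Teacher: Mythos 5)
Your proof is correct, and it takes a genuinely different route from the paper's. The paper proves Lemma~\ref{ddBE} by first reducing to a bounded time window via the decomposition $\R=A\cup(A+\tfrac{\pi}{4})$ and then applying the Lens transform~\eqref{LensTransform} to convert $e^{it\HH_d}$ into a free Schr\"odinger flow in $y$; after Plancherel and Minkowski, the product becomes a convolution in the dual $y$-frequency, the $\lambda^{-1/2}$ gain comes from~\eqref{BS0}, and the extra regularity $\HH^{d/2+1}_y$ (hence $\HH^3_y$ for $d=4$) arises from a Young-type $L^1_\eta\times L^2_\eta\to L^2_\eta$ estimate. You instead exploit directly that $e^{it\HH_d}$ acts on the Hermite basis by unimodular phases $e^{it\lambda_n}$: a single H\"older step in $y$ (one factor in $L^2_y$, the other in $L^\infty_y\hookleftarrow\HH^3_y$) decouples the $y$-variable, and the problem collapses to a diagonal sum of 1D bilinear Strichartz estimates for $e^{-it\partial_x^2}U_{n,j}$ and $e^{-it\partial_x^2}V_{m,k}$. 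This is more elementary and avoids both the Lens transform and the time-domain decomposition; the small price is that $\HH^3_y$ is what you get for all $d\le 4$, whereas the paper's argument produces the slightly sharper $\HH^{d/2+1}_y$ (the two coincide only at $d=4$, and the lemma as stated only claims $\HH^3_y$, so this is immaterial). One small clean-up: your closing caveat about cross-phases $e^{it(\lambda_n+\lambda_m)}$ is moot --- after H\"older the two factors are separated and each phase is already absorbed inside a modulus, so no such cross-phase ever appears --- and, as usual, $e^{-it\partial_x^2}$ satisfies~\eqref{BS0} just as $e^{it\partial_x^2}$ does by the change of variable $t\mapsto -t$.
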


\begin{proof}
Let $A=\cup_{n\in \frac{1}{2}\Z}[n\pi-\frac{\pi}{8}, n\pi+ \frac{\pi}{8}]$. Then $\R=A\cup (A+\frac{\pi}{4})$. By time translation invariance and the unitarity of the flow on $L_{x, y}^2$ and $L^2_{x}\HH^{3}_y$, it is sufficient to prove~\eqref{BS} where the time integration is taken over $A$. For that, we will resort to the Lens transform to reduce to the corresponding problem with $\HH$ replaced by $-\Delta$, for which the result is almost trivial.

Let $f\in L^2(\R^d)$, and denote $v(t,\cdot)=e^{-itH}f$ and $u(t,\cdot)=e^{it\Delta}f$. The lens transform gives (see for instance~\cite{Carles2,Tao}) 
\begin{equation}\label{LensTransform}
u(t,x) =    \frac{1}{(1+4t^2)^{d/4}}    v \Big( \frac{\arctan(2t)}{2}  , \frac{x}{\sqrt{1+4t^2} } \Big)     e^{ \frac{i|x|^2t}{1+4t^2}  }.
\end{equation}
This means that for $t\in A$, we have that 
\begin{multline*}
\big(e^{it\partial_{xx}}e^{-it\HH} Q_{\lambda}\phi \big)\big(e^{it\partial_{xx}}e^{-it\HH} Q_{\lambda}\psi\big)\\
\quad=(1+\tan^22t)^{d/2}\big(e^{it\partial_{xx}}e^{i\frac{\tan 2t}{2}\Delta_{\R^{d}}}Q_{\lambda}\phi(x, \sqrt{1+\tan^2 2t} \,y)\big)\cdot\qquad\qquad\\
\cdot \big(e^{it\partial_{xx}}e^{i\frac{\tan 2t}{2}\Delta_{\R^{d}}}Q_{\mu}\psi(x, \sqrt{1+\tan^2 2t} \,y) \big)e^{ -iy^2\tan (2t)}.
\end{multline*}
Taking the $L^2(\R^{d}_y)$ norm in the previous equality, one gets:
\begin{align*}
&\big\|\big(e^{it\partial_{xx}}e^{-it\HH} Q_{\lambda}\phi \big)\big( e^{it\partial_{xx}}e^{-it\HH} Q_{\lambda}\psi\big)\big\|_{L^2(\R_y^{d})}\\
&\quad=(1+\tan^22t)^{d/4}\big\|\big( e^{it\partial_{xx}}e^{i\frac{\tan 2t}{2}\Delta_{\R^{d}}}Q_{\lambda}\phi(x, y)\big) \big(e^{it\partial_{xx}}e^{i\frac{\tan 2t}{2}\Delta_{\R^{d}}}Q_{\mu}\psi(x, y)\big)\big\|_{L^2(\R_y^{d})}\\
&\quad\leq 2^{d/4}\big\|\big( e^{it\partial_{xx}}e^{i\frac{\tan 2t}{2}\Delta_{\R^{d}}}Q_{\lambda}\phi(x, y)\big) \big(e^{it\partial_{xx}}e^{i\frac{\tan 2t}{2}\Delta_{\R^{d}}}Q_{\mu}\psi(x, y)\big)\big\|_{L^2(\R_y^{d})}
\end{align*}
where we used in the last inequality that $t\in A$. But, by Plancherel's theorem
\begin{align*}
&\big\|\big( e^{it\partial_{xx}}e^{i\frac{\tan 2t}{2}\Delta_{\R^{d}}}Q_{\lambda}\phi(x, y)\big) \big(e^{it\partial_{xx}}e^{i\frac{\tan 2t}{2}\Delta_{\R^{d}}}Q_{\mu}\psi(x, y)\big)\big\|_{L^2(\R_y^{d})}\\
&\quad=\left\|\int_{\R^{d}_{\eta_{1}}}e^{-i\frac{\tan 2t}{2}(|\eta_{1}|^2+|\eta-\eta_{1}|^2)}e^{it\partial_{xx}}\mathcal F_{y}Q_\lambda \phi(x, \eta_{1})e^{it\partial_{xx}}\mathcal F_{y}Q_\mu\psi(x, \eta-\eta_{1})d\eta_{1}\right\|_{L^2(\R^{d}_{\eta})}\\
&\quad\leq \left\|\int_{\R^{d}_{\eta_{1}}}\left| e^{it\partial_{xx}}\mathcal F_{y}Q_\lambda \phi(x, \eta_{1})e^{it\partial_{xx}}\mathcal F_{y}Q_\mu\psi(x, \eta-\eta_{1})\right|d\eta_{1}\right\|_{L^2(\R^{d}_{\eta})}.
\end{align*}
Taking $L^{2}_{t,x}(A\times \R)$ and using Minkowski's inequality and the 1D Strichartz estimate~\eqref{BS0} we arrive at
\begin{align*}
&\big\|\big( e^{it\partial_{xx}}e^{i\frac{\tan 2t}{2}\Delta_{\R^{d}}}Q_{\lambda}\phi(x, y)\big) \big(e^{it\partial_{xx}}e^{i\frac{\tan 2t}{2}\Delta_{\R^{d}}}Q_{\mu}\psi(x, y)\big)\big\|_{L^2_{t,x, y}(A\times \R\times \R^{d})}\\
&\quad\leq \lambda^{-1/2}\left\|\int_{\R^{d}_{\eta_{1}}}\|\mathcal F_{y} \phi(x, \eta_{1})\|_{L^2_x(\R)}\|\mathcal F_{y} \psi(x, \eta-\eta_{1})\|_{L^2_x(\R)}d\eta_{1}\right\|_{L^2(\R^{d}_{\eta})}\\
&\quad\lesssim \lambda^{-1/2}\|\phi\|_{L^2_{x,y}}\|\psi\|_{L_x^2 H_y^{\frac{d}{2}+1}}\lesssim \lambda^{-1/2}\|\phi\|_{L^2_{x,y}}\|\psi\|_{L_x^2 \HH_y^{\frac{d}{2}+1}},
\end{align*}
which is the needed the result. Note that in the last inequality we used the fact that the $\HH^{d/2+1}$ norm controls the Euclidean $H^{d/2+1}$ norm.
\end{proof}

Recall that $F=\sum_{q\geq0}F_{q}$ is the decomposition of $F$ according to the eigenspaces of the harmonic oscillator. Then given any trilinear operator $\mathfrak I$ we define
\begin{equation*}
P_{\omega=0} \mathfrak J[F, G, H]:=\sum_{(p,q,r,s)\in \Gamma_0} \Pi_p \mathfrak  I[F_q, G_r, H_s], 
\end{equation*}
and
\begin{equation*}
P_{\omega\neq 0} \mathfrak I[F, G, H]:=\sum_{\omega\neq 0}\sum_{(p,q,r,s)\in \Gamma_\omega} \Pi_p \mathfrak I[F_q, G_r, H_s].
\end{equation*}
We then  split the nonlinearity as follows:
\begin{equation*}
 \mathcal{N}^{t}[F, G, H]=\mathcal N_0^{t}[F, G, H]+ \widetilde{\mathcal N^{t}}[F, G, H],
\end{equation*}
with 
\begin{equation}\label{defN0}
\mathcal{N}_0^{t}[F, G, H]:=P_{\omega=0} \mathcal{N}^{t}[F, G, H], \quad \widetilde{\mathcal N^{t}}[F, G, H]:=P_{\omega\neq 0} \mathcal{N}^{t}[F, G, H].
\end{equation}

\subsection{The fast oscillations: study of \texorpdfstring{$\widetilde{\mathcal{N}^{t}}$}{N}}\label{FastOsSec}
The main purpose of this subsection is to prove the following:
\begin{lemma}\label{FastOsLem}
Let $1\leq T\leq T^\ast$. Assume that $F$, $G$, $H$: $\mathbb{R} \to S$ satisfy~\eqref{leq1} and
\begin{equation*}
F=Q_{\leq T^{1/6}}F, \quad G=Q_{\leq T^{1/6}}G, \quad H=Q_{\leq T^{1/6}}H\,.
\end{equation*}
Then we can write
$$
\widetilde{\mathcal{N}^{t}}[F(t),G(t),H(t)]=\widetilde{\mathcal{E}_{1}^{t}}[F(t),G(t),H(t)]+\mathcal{E}_{2}^{t}[F(t),G(t),H(t)],
$$
and if we set $\widetilde{\mathcal{E}_{1}}(t):=\widetilde{\mathcal{E}^{t}_{1}}[F(t),G(t),H(t)]$
and $\mathcal{E}_{2}(t):=\mathcal{E}_{2}^{t}[F(t),G(t),H(t)]$ then it holds that, uniformly in $1\leq T\leq T^\ast$,
\begin{equation*}
T^{1+2\delta}\sup_{T/4\leq t\leq T^\ast}\Vert \widetilde{\mathcal{E}_{1}}(t)\Vert_{S}\lesssim 1,\quad
T^{1/10}\sup_{T/4\leq t\leq T^\ast}\Vert \mathcal{E}_3(t)\Vert_{S}\lesssim 1,
\end{equation*}
where $\mathcal{E}_{2}(t)=\partial_t\mathcal{E}_3(t)$.
Assuming in addition that~\eqref{BA+} holds we have 
\begin{equation*}
T^{1+2\delta}\sup_{T/4\leq t\leq T^\ast}\Vert \widetilde{\mathcal{E}_{1}}(t)\Vert_{S^+}\lesssim 1,
\qquad T^{1/10}\sup_{T/4\leq t\leq T^\ast}\Vert \mathcal{E}_3(t)\Vert_{S^+}\lesssim 1.
\end{equation*}
\end{lemma}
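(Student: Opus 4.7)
The strategy is a normal-form reduction in time, exploiting that every $\omega$ appearing in $\widetilde{\mathcal{N}^{t}}$ is a nonzero even integer, so $|\omega|\geq 2$ and
\begin{equation*}
e^{-it\omega}=\partial_{t}\!\left(\frac{e^{-it\omega}}{-i\omega}\right).
\end{equation*}
Integrating by parts in $t$ inside every summand of $\widetilde{\mathcal{N}^{t}}$, I would define $\mathcal{E}_{3}(t)$ as the ``boundary-type'' term obtained by replacing $e^{-it\omega}$ with $e^{-it\omega}/(-i\omega)$, set $\mathcal{E}_{2}(t):=\partial_{t}\mathcal{E}_{3}(t)$, and let $\widetilde{\mathcal{E}_{1}}(t):=\widetilde{\mathcal{N}^{t}}[F,G,H](t)-\mathcal{E}_{2}(t)$ collect the commutator terms in which $\partial_{t}$ is transferred past $F,G,H$ or past the continuous oscillation $e^{2it\eta\kappa}$.

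The control of $\mathcal{E}_{3}$ in $S$ (and in $S^{+}$) follows immediately: apply the trilinear estimate of Lemma~\ref{wup} after dyadic decomposition in the eigenvalue shells of $\HH_{d}$, together with the gain $1/|\omega|$ from the integration by parts, and use Cauchy--Schwarz in $\omega$ against the summable sequence $(|\omega|^{-2})_{\omega\in 2\Z\setminus\{0\}}$. The regularity $N\geq 8$ absorbs the loss from the eigenvalue-shell summation, and combined with the $X_{T^{\ast}}$ bounds $\|F\|_{S}+\|G\|_{S}+\|H\|_{S}\les t^{\delta}$, one obtains $\|\mathcal{E}_{3}(t)\|_{S}\les t^{-1+3\delta}\les t^{-1/10}$ on $t\geq T/4$. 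The same argument under the $X_{T^{\ast}}^{+}$ bounds yields the $S^{+}$ statement.

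For $\widetilde{\mathcal{E}_{1}}$ one splits further into (a) $\partial_{t}$ falling on $F,G$, or $H$, and (b) $\partial_{t}$ falling on $e^{2it\eta\kappa}$. Subcase~(a) is routine: Lemma~\ref{wup} combined with the $X_{T^{\ast}}$ bound $\|\partial_{t}F\|_{S}\les t^{-1+3\delta}$ (and similarly for $G,H$) yields a size of $t^{-1}\cdot t^{-1+3\delta}\cdot t^{2\delta}=t^{-2+5\delta}$, summable in $\omega$ against $|\omega|^{-1}$ and well within the required $T^{-1-2\delta}$. In subcase~(b), $\partial_{t}$ produces a factor $2i\eta\kappa$; I would handle this by a second integration by parts in the Fourier variable $\eta$, using $e^{2it\eta\kappa}=(2it\kappa)^{-1}\partial_{\eta}e^{2it\eta\kappa}$ (or its $\kappa$-analogue), gaining an additional $t^{-1}$ at the cost of applying $\partial_{\eta}$, equivalent to $x$-multiplication on the physical side, to one of the inputs. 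Since the $S$-norm already encodes $\|xF\|_{L^{2}}$, this cost is harmless and the resulting bound is again of size $t^{-2+O(\delta)}$.

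\textbf{Main obstacle.} The delicate step is subcase~(b): the factor $\eta\kappa$ produced when $\partial_{t}$ hits the bilinear $x$-oscillation is unbounded in principle. The frequency cutoff $F=Q_{\leq T^{1/6}}F$ bounds $|\eta|,|\kappa|\les T^{1/6}$ on the supports of integration, but by itself this gives only a $T^{1/3}$ loss against the $t^{-1}$ decay of Lemma~\ref{wup}, which would be insufficient. What saves the argument is the additional oscillation $e^{2it\eta\kappa}$ itself, whose second integration by parts in $(\eta,\kappa)$ supplies the missing $t^{-1}$ factor; the resulting $x$-weights are precisely those controlled by the $S$-norm. The $S^{+}$ conclusion is obtained by the same scheme, invoking the $X_{T^{\ast}}^{+}$ controls on $\|(1-\partial_{xx})^{4}F\|_{S}$ and $\|xF\|_{S}$ to absorb the additional weight and derivatives built into $S^{+}$.
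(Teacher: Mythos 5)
Your proposal reverses the order of operations relative to the paper, and this creates a genuine gap that you do not close. The paper first localizes in the {\it spatial} phase by writing $\widetilde{\mathcal N^t}=P_{\omega\neq 0}(\mathcal O_1^t+\mathcal O_2^t)$ with the cutoff $\varphi(t^{1/4}\eta\kappa)$, and then performs the time integration by parts {\it only} on the small-$\eta\kappa$ piece $\mathcal O_2^t$; the large-$\eta\kappa$ piece $\mathcal O_1^t$ is never IBP'd in time but handled by an integration by parts in $\kappa$ together with a close/far physical-space splitting. You instead IBP in time over the whole of $\widetilde{\mathcal N^t}$ and then do an extra $\eta$-IBP to tame the factor $2i\eta\kappa$ produced when $\partial_t$ hits $e^{2it\eta\kappa}$. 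That part of your scheme is a real alternative: the apparent singularity $1/(t\kappa)$ is indeed cancelled by the factor $\eta\kappa$, the multiplier $\eta/t$ localized to $|\eta|\lesssim T^{1/6}$ costs only $T^{1/6}/t\lesssim T^{-5/6}$, the extra weight lands on $\|xF\|_{L^2_{x,y}}$ which is part of the $S$-norm, and the resulting size $T^{-11/6+O(\delta)}$ beats the target $T^{-1-2\delta}$ (and similarly for the $S^+$ version with $\|xF\|_S$ controlled by $X^+_{T^*}$). So the numerology of your subcase~(b) is sound.

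The gap is in the summation over the nonresonant levels $\omega$, both for $\mathcal E_3$ and for $\widetilde{\mathcal E_1}$. You write $\mathcal E_3=\sum_{\omega\neq 0}\frac{e^{-it\omega}}{-i\omega}\,(\cdots)_\omega$ and then invoke Cauchy--Schwarz against $(|\omega|^{-2})$, asserting that $N\geq 8$ ``absorbs the loss''. But the $\Gamma_\omega$-pieces of the trilinear form are not a priori $\ell^2$-summable in $\omega$: the product of Hermite functions has nontrivial projections onto {\it all} eigenspaces $E_p$, so the constraint $p-q+r-s=\omega/2$ does not force any of the four indices to be large and Sobolev regularity alone does not give decay in $\omega$. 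The summability is true, but the correct reason is a Parseval identity in an auxiliary time variable: introducing $\mathcal O^{t,\tau}_2[F,G,H]:=e^{-i\tau\HH}\big(\cdots e^{i\tau\HH}F\cdots\big)$ one has $\sum_\omega e^{-i\tau\omega}(\cdots)_\omega = P_{\omega\neq 0}\mathcal O^{t,\tau}_2$ and hence $\sum_\omega\|(\cdots)_\omega\|^2 = \frac{1}{\pi}\int_0^\pi\|P_{\omega\neq 0}\mathcal O^{t,\tau}_2\|^2\,d\tau$, which reduces the $\ell^2_\omega$ bound to a single estimate on $\mathcal O^{t,\tau}_2$. The paper exploits the same periodicity, but differently and more directly: it chooses the antiderivative $\frac{e^{-it\omega}-1}{-i\omega}$ (note the ``$-1$''), uses that $\omega\in 2\Z$ to rewrite the boundary sum as $-\int_{2\pi[t/(2\pi)]}^t P_{\omega\neq 0}\mathcal O^{t,\tau}_2\,d\tau$, a time integral over an interval of length $\leq 2\pi$, and then bounds the recombined operator with Lemma~\ref{CM} and the dispersive estimate. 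Without this recombination (or the equivalent Parseval-in-$\tau$ identity), the $\omega$-sum in your $\mathcal E_3$ and $\widetilde{\mathcal E_1}$ is not under control; adding that ingredient would complete your variant of the argument.
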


The proof of this lemma follows the so-called ``space-time" resonance approach~\cite{GeMaSh}.
\begin{proof} 
Set 
\begin{equation*}
 F^{a}=Q_{\leq T^\frac{1}{6}}F^{a},\qquad  F^{b}=Q_{\leq T^\frac{1}{6}}F^{b}, \qquad  F^{c}=Q_{\leq T^\frac{1}{6}}F^{c}.
\end{equation*}
Let us decompose  $\widetilde{\mathcal N^{t}}$ along the non-resonant level sets as follows:
\begin{equation*}
\mathcal{F}_{x}\widetilde{\mathcal{N}^{t}}[F^{a},F^{b},F^{c}](\xi,y)=
P_{\om \neq 0}\big(\mathcal{O}^{t}_{1}[F^{a},F^{b},F^{c}](\xi,y)+\mathcal{O}^{t}_{2}[F^{a},F^{b},F^{c}](\xi,y)\big), 
\end{equation*}
where $P_{\om\neq 0}$ is the projector on the non resonant frequencies, and
\begin{multline}\label{defO1}
\mathcal{O}^{t}_{1}[F^{a},F^{b},F^{c}](\xi,y):=\\
e^{-it\HH_{d}}\int_{\mathbb{R}^2}e^{2it\eta\kappa}(1-\varphi(t^{\frac{1}{4}}\eta\kappa))\widehat{e^{it\HH_{d}} F^{a}}(\xi-\eta)\overline{\widehat{e^{it\HH_{d}} F^{b}}}(\xi-\eta-\kappa)\widehat{e^{it\HH_{d}} F^{c}}(\xi-\kappa)d\eta d\kappa,
\end{multline}
\begin{multline*}
\mathcal{O}^{t}_{2}[F^{a},F^{b},F^{c}](\xi,y):=\\
e^{-it\HH_{d}}\int_{\mathbb{R}^2}e^{2it\eta\kappa}\varphi(t^{\frac{1}{4}}\eta\kappa)\widehat{e^{it\HH_{d}} F^{a}}(\xi-\eta)\overline{\widehat{e^{it\HH_{d}} F^{b}}}(\xi-\eta-\kappa)\widehat{e^{it\HH_{d}} F^{c}}(\xi-\kappa)d\eta d\kappa.
\end{multline*}

$\bullet$ {\bf Estimation of $\mathcal{O}^{t}_{2}$:}  Denote by 
 \begin{equation*}
 \mathcal{O}^{t}_{2,\om}[F^{a},F^{b},F^{c}]:=\sum_{(p,q,r,s)\in \Gamma_{\om}}\Pi_{p} \int_{\mathbb{R}^2}e^{2it\eta\kappa}\varphi(t^{\frac{1}{4}}\eta\kappa)\widehat{ F^{a}_{q}}(\xi-\eta)\overline{\widehat{ F^{b}_{r}}}(\xi-\eta-\kappa)\widehat{F^{c}_{s}}(\xi-\kappa)d\eta d\kappa,
 \end{equation*}
 then we can write
 \begin{equation}\label{decomp}
 P_{\om\neq 0} \mathcal{O}^{t}_{2}[F^{a},F^{b},F^{c}] =\sum_{\om\neq 0}e^{-i\om t} \mathcal{O}^{t}_{2,\om}[F^{a},F^{b},F^{c}].
 \end{equation}
Next observe that  we have the following relation
\begin{multline}\label{TIPP}
 e^{-it\omega}\mathcal{O}^{t}_{2,\om}[F^{a},F^{b},F^{c}]=\partial_t\Big( \frac{e^{-it\omega}-1}{-i\omega}\mathcal{O}^{t}_{2,\om}[F^{a},F^{b},F^{c}]\Big)
+\frac{e^{-it\omega}-1}{i\omega}\left(\partial_t\mathcal{O}^{t}_{2,\om}\right)[F^{a},F^{b},F^{c}]
\\
+\frac{e^{-it\omega}-1}{i\omega}\Big(\mathcal{O}^{t}_{2,\om}[\partial_tF^{a},F^{b},F^{c}]+
 \mathcal{O}^{t}_{2,\om}[F^{a},\partial_{t}F^{b},F^{c}]+\mathcal{O}^{t}_{2,\om}[F^{a},F^{b},\partial_{t}F^{c}]\Big),
\end{multline}
where
 \begin{equation*}
\left(\partial_t\mathcal{O}^{t}_{2,\om}\right)[F^{a},F^{b},F^{c}](\xi):=\sum_{(p,q,r,s)\in \Gamma_{\om}}\Pi_{p} \int_{\mathbb{R}^2}\partial_{t}\big(e^{2it\eta\kappa}\varphi(t^{\frac{1}{4}}\eta\kappa)\big)\widehat{ F^{a}_{q}}(\xi-\eta)\overline{\widehat{ F^{b}_{r}}}(\xi-\eta-\kappa)\widehat{F^{c}_{s}}(\xi-\kappa)d\eta d\kappa.
 \end{equation*}
Now set
\begin{multline}\label{defO2}
\mathcal{O}^{t,\tau}_{2}[F^{a},F^{b},F^{c}](\xi) :=\\
e^{-i\tau\HH_{d}}\int_{\mathbb{R}^2}e^{2it\eta\kappa}\varphi(t^{\frac{1}{4}}\eta\kappa)\widehat{e^{i\tau\HH_{d}} F^{a}}(\xi-\eta)\overline{\widehat{e^{i\tau\HH_{d}} F^{b}}}(\xi-\eta-\kappa)\widehat{e^{i\tau\HH_{d}} F^{c}}(\xi-\kappa)d\eta d\kappa 
\end{multline}
and
\begin{multline*}
\big(\partial_{t}\mathcal{O}^{t,\tau}_{2}\big)[F^{a},F^{b},F^{c}](\xi): =\\
e^{-i\tau\HH_{d}}\int_{\mathbb{R}^2} \partial_t\Big(e^{2it\eta\kappa}\varphi(t^{\frac{1}{4}}\eta\kappa)\Big)\widehat{e^{i\tau\HH_{d}} F^{a}}(\xi-\eta)\overline{\widehat{e^{i\tau\HH_{d}} F^{b}}}(\xi-\eta-\kappa)\widehat{e^{i\tau\HH_{d}} F^{c}}(\xi-\kappa)d\eta d\kappa .
\end{multline*}

The reason for introducing those operators is the fact that for all $t,\tau$ and $\omega$
\begin{multline*}
e^{-i\omega \tau}\mathcal{O}^{t}_{2,\omega}[F^{a},F^{b},F^{c}](\xi)=\\
\begin{aligned}
&=\sum_{(p,q,r,s)\in \Gamma_{\om}}\Pi_{p} e^{-i\tau \HH_{d}}\int_{\mathbb{R}^2}e^{2it\eta\kappa}\varphi(t^{\frac{1}{4}}\eta\kappa)\widehat{ e^{i\tau \HH_{d}}F^{a}_{q}}(\xi-\eta)\overline{\widehat{ e^{i\tau \HH_{d}}F^{b}_{r}}}(\xi-\eta-\kappa)\widehat{e^{i\tau \HH_{d}}F^{c}_{s}}(\xi-\kappa)d\eta d\kappa\\
&=\sum_{(p,q,r,s)\in \Gamma_{\om}}\Pi_{p} \mathcal{O}^{t, \tau}_{2}[F^{a}_q, F^{b}_r, F^{c}_s](\xi)
\end{aligned}
\end{multline*}
so that 
$$
\sum_{\omega\neq 0}e^{-i\omega \tau}\mathcal{O}^{t}_{2,\omega}[F^{a}, F^{b}, F^{c}]=P_{\omega\neq 0} \mathcal{O}^{t, \tau}_{2}[F^{a}, F^{b}, F^{c}].
$$

Denote by $[\tau]$ the integer part of the real number $\tau$, then   by~\eqref{decomp} and~\eqref{TIPP}
\begin{multline}\label{tipp2}
 P_{\om\neq 0} \mathcal{O}^{t}_{2}[F^{a},F^{b},F^{c}]=-\partial_t\big(  \mathcal{F}_{x}\mathcal{E}_3\big)
- P_{\om\neq 0}   \int_{2\pi[\frac{t}{2\pi}]}^{t}  \big(\partial_{t}\mathcal{O}^{t,\tau}_{2} \big) [F^{a},F^{b},F^{c}]d\tau
\\
-P_{\om\neq 0}   \int_{2\pi[\frac{t}{2\pi}]}^{t}  \big(\mathcal{O}^{t,\tau}_{2} [\partial_tF^{a},F^{b},F^{c}]+\mathcal{O}^{t,\tau}_{2} [F^{a},\partial_tF^{b},F^{c}]+\mathcal{O}^{t,\tau}_{2} [F^{a},F^{b},\partial_tF^{c}]  \big)      d\tau,   
\end{multline}
where $\mathcal{E}_3$ is defined by 
\begin{eqnarray}\label{defe3}
 \mathcal{F}_{x}\mathcal{E}_3(\xi,y)&:=&\sum_{\omega\neq 0}\sum_{(p,q,r,s)\in\Gamma_\omega}\frac{1-e^{-it\omega}}{i\omega}\mathcal{O}^{t}_{2,\om}[F^{a}_q,F^{b}_r,F^{c}_s](\xi,y)\nonumber\\
 &=& \int_{2\pi[\frac{t}{2\pi}]}^{t} \sum_{\omega\neq 0}\sum_{(p,q,r,s)\in\Gamma_\omega}e^{-i\tau \omega}\mathcal{O}^{t}_{2,\om}[F^{a}_q,F^{b}_r,F^{c}_s](\xi,y)d \tau\nonumber\\
&=& \int_{2\pi[\frac{t}{2\pi}]}^{t}  P_{\om\neq 0} \mathcal{O}^{t,\tau}_{2}[F^{a},F^{b},F^{c}] (\xi,y)d\tau.
\end{eqnarray}
We now estimate the contribution of each term in~\eqref{tipp2}.\medskip

$\star$ We first consider the term $ \mathcal{F}_{x}\mathcal{E}_3$.  We can define the multliplier $m$ which appears in the definition of  $\mathcal{O}^{t}_{2}$ by 
\begin{equation*}
m(\eta, \kappa):=\varphi(t^{1/4}\eta \kappa) \varphi((10T)^{-1/6}\eta)\varphi((10T)^{-1/6}\kappa).
\end{equation*}
It's not hard to see (cf.~\cite[Remark 3.5]{HPTV}) that the following bound holds $\|\mathcal F_{\eta\kappa} \widetilde{m}\|_{L^1(\mathbb{R}^2)}\lesssim t^{\frac{\delta}{100}}$. We now apply  Lemma~\ref{CM} below with $(p,q,r,s)=(2,2,\infty,\infty)$, $f=e^{-it \partial^{2}_{x}}e^{is\HH_{d}} F^{\sigma(a)},\dots$ and we get for $t\geq T/4$ and $\tau \in \big[2\pi[\frac{t}{2\pi}], t\big]$
\begin{multline*} 
\Vert \mathcal{O}^{t,\tau}_{2} [F^{a},F^{b},F^{c}]\Vert_{L^2_{\xi}}=\Vert \mathcal{F}_{\xi}\mathcal{O}^{t,\tau}_{2} [F^{a},F^{b},F^{c}]\Vert_{L^2_{x}}\\
\begin{aligned}
&\lesssim(1+|t|)^\frac{\delta}{100}
\min_{\sigma\in\mathfrak{S}_3}\Vert e^{-it \partial^{2}_{x}}e^{is\HH_{d}} F^{\sigma(a)}\Vert_{L^2_{x}}\Vert e^{-it \partial^{2}_{x}}e^{is\HH_{d}}F^{\sigma(b)}\Vert_{L^{\infty}_{x}}\Vert e^{-it \partial^{2}_{x}}e^{is\HH_{d}}F^{\sigma (c)}\Vert_{L^{\infty}_{x}}.
\end{aligned}
\end{multline*}
Then we take the $L^{2}_{y}$-norm and by~\eqref{estdisp} we get 
\begin{equation}\label{o22}
\Vert \mathcal{O}^{t,\tau}_{2} [F^{a},F^{b},F^{c}]\Vert_{L^2_{\xi,y}}\lesssim (1+\vert t\vert)^{-1+\frac{\delta}{100}}\min_{\sigma\in\mathfrak{S}_3}\Vert   F^{\sigma(a)}\Vert_{L^2_{x,y}}\Vert F^{\sigma(b)}\Vert_{S}\Vert F^{\sigma(c)}\Vert_{S}.
\end{equation}

Next use that $\big|    t- 2\pi[\frac{t}{2\pi}]\big|\leq C $ to get 
\begin{equation}\label{calc}
\big\| \mathcal{F}_{x}\mathcal{E}_3\big\|_{L^{2}_{\xi,y}}\lesssim (1+\vert t\vert)^{-1+\frac{\delta}{100}}\min_{\sigma\in\mathfrak{S}_3}\Vert   F^{\sigma(a)}\Vert_{L^2_{x,y}}\Vert F^{\sigma(b)}\Vert_{S}\Vert F^{\sigma(c)}\Vert_{S},
\end{equation}
\medskip
and the estimates in $S$ and $S^{+}$ norms follow from Lemma~\ref{ControlSS+}.

$\star$ Since $(1+\vert t\vert)^{1/4}(\partial_t\mathcal{O}_{2}^{t,\tau})$ satisfies similar estimates as $\mathcal{O}_{2}^{t,\tau}$, the second term in the right hand-side of~\eqref{tipp2} is acceptable.\medskip

$\star$ The contribution of the terms in the second line of~\eqref{tipp2} is estimated as in~\eqref{o22}, and by using the definition of the $X_{T^\ast}$ norm.\medskip

This ends the estimation of $\mathcal{O}^{t}_{2}$.\medskip

$\bullet$ {\bf Estimation of $\mathcal{O}^{t}_{1}$:}  Here we show that for $|t|\geq T/4$
\begin{equation*} 
\Vert \mathcal{O}^{t}_{1}[F^{a},F^{b},F^{c}]\Vert_{L^2_{\xi,y}}\lesssim  T^{-\frac{201}{200}}\min_{\sigma\in\mathfrak{S}_3}\Vert F^{\sigma(a)}\Vert_{L^2_{x,y}}\Vert F^{\sigma(b)}\Vert_{S}\Vert F^{\sigma(c)}\Vert_{S}.
\end{equation*}
It is enough to prove that
\begin{equation}\label{OSmall1}
\begin{split}
\Vert \mathcal{O}^{t}_{1}[F,G,H]\Vert_{L^2_{\xi,y}}&\lesssim  T^{-\frac{201}{200}}\Vert F\Vert_{L^2_{x,y}}\Vert G\Vert_{S}\Vert H\Vert_{S},
\end{split}
\end{equation}
since the other inequalities  follow by symmetry and conjugation.

Then, observe that by $\mathcal{O}_{1}^{t}+\mathcal{O}_{2}^{t}$ satisfies the estimate~\eqref{SuffLem} (Lemma~\ref{CM}), and therefore we also have the bound
\begin{equation}\label{o1}
\Vert \mathcal{O}^{t}_{1} [F,G,H]\Vert_{L^2_{\xi,y}}   \lesssim(1+|t|)^\frac{\delta}{100}
 \Vert e^{it\DD}  F\Vert_{L^2_{x,y}}\Vert e^{it\DD}G\Vert_{L^{\infty}_{x,y}}\Vert e^{it\DD}H\Vert_{L^{\infty}_{x,y}}.
\end{equation}

We first decompose
\begin{equation*}
\begin{split}
G=G_c+G_f,\quad H=H_c+H_f,\quad G_c(x)=\varphi(\frac{x}{T^{\frac{3}{4}}})G(x),\quad H_c(x)=\varphi(\frac{x}{T^{\frac{3}{4}}})H(x).
 \end{split}
\end{equation*}
Arguing as in~\eqref{estFt} but using instead the inequality~\eqref{L11} in the general case, one obtains that 
\begin{equation}\label{418}
  \big\Vert e^{it\DD}F\big\Vert_{L^{\infty}_{x,y}}  \lesssim   \vert t\vert^{-\frac{1}{2}}    \big\Vert F\big\Vert^{1-\frac1{2\a}}_{L^{2}_{x}\HH_{y}^{N}}    \big\Vert |x|^{\a}F\big\Vert^{\frac1{2\a}}_{L^{2}_{x,y}},\quad \text{for all}\quad \frac{1}{2}<\alpha\leq 1. 
\end{equation}
We apply this estimate to $F$ and $xF$ and get that 
\begin{equation}\label{pinguin}
\begin{split}
\big\Vert e^{it\DD}F\big\Vert_{L^{\infty}_{x,y}}  \lesssim&  \;R^{-\beta} \vert t\vert^{-\frac{1}{2}}    \big\Vert F\big\Vert_{S}, \quad\text{if $F$ is supported on $|x|\geq R$}\\[3pt]
 \big\Vert e^{it\DD}xF\big\Vert_{L^{\infty}_{x,y}}  \lesssim& \; R^{1-\gamma}|t|^{-\frac{1}{2}} \big\Vert F\big\Vert_{S}, \quad\text{if $F$ is supported on $|x|\leq R$}
\end{split}
\end{equation}
 for all $\beta, \gamma<1/2$. Fixing $\beta=\gamma=\frac{1}{3}$ for concreteness, one obtains from~\eqref{o1} and the first inequality in~\eqref{pinguin} that~\eqref{OSmall1} is a consequence of the estimate
\begin{equation*}
\Vert \mathcal{O}^{t}_{1}[F,G_c,H_c]\Vert_{L^2_{\xi,y}}\lesssim T^{-\frac{101}{100}}\Vert F\Vert_{L^2_{x,y}}\Vert G\Vert_{S}\Vert H\Vert_{S}.
\end{equation*}

To prove this estimate, we integrate by parts in the $\kappa$ integral in~\eqref{defO1} to obtain:
\begin{align*}
&\mathcal{O}^{t}_{1}[F,G,H](\xi,y)=\\&(2it)^{-1}
e^{-it\HH_{d}}\int_{\mathbb{R}^2}e^{2it\eta\kappa}\eta^{-1}\widehat{e^{it\HH_{d}} F}(\xi-\eta)\partial_{\kappa}\left[(1-\varphi(t^{\frac{1}{4}}\eta\kappa))\overline{\widehat{e^{it\HH_{d}} G}}(\xi-\eta-\kappa)\widehat{e^{it\HH_{d}} H}(\xi-\kappa)\right]d\eta d\kappa.
\end{align*}
The contribution of the term when the $\kappa$ derivative falls on the multiplier is easy to bound. So, suppose that the $\kappa$ derivative falls on $G$. In this case, we note that on the support of integration, we necessarily have $\vert \eta\vert\gtrsim t^{-1/4}|\kappa|^{-1}\gtrsim T^{-\frac{5}{12}}$. Thanks to~\eqref{o1}, this allows to bound the corresponding contribution as:
\begin{multline*}
CT^{-\frac{7}{12}}
\Big\|\int_{\mathbb{R}^2}e^{2it\eta\kappa}\frac{(1-\varphi(T^{\frac{5}{12}}\eta))}{T^{\frac{5}{12}}\eta}(1-\varphi(t^{\frac{1}{4}}\eta\kappa))\cdot\\
\qquad\qquad\qquad\qquad\cdot\widehat{e^{it\HH_{d}} F}(\xi-\eta)\overline{\widehat{e^{it\HH_{d}} xG}}(\xi-\eta-\kappa)\widehat{e^{it\HH_{d}} H}(\xi-\kappa)d\eta d\kappa \Big\|_{L^2_{\xi, y}}\lesssim\\
\begin{aligned}
&\lesssim T^{-\frac{7}{12}+\frac{\delta}{100}}\|F\|_{L^2_{x,y}}\|e^{it\DD} xG\|_{L^\infty_{x,y}}\|e^{it\DD} H\|_{L^\infty_{x,y}}\\
& \lesssim T^{-\frac{13}{12}+\frac{\delta}{100}}\|F\|_{L^2_{x,y}}\|G\|_{S}\|H\|_{S},
\end{aligned}
\end{multline*}
where the last estimate follows from~\eqref{418} and the second line in~\eqref{pinguin}.

The case when the $\kappa$ derivative falls on $H$ is similar. This finishes the estimation of $\mathcal O^{t}_{1}$ in~\eqref{OSmall1}.\medskip

$\bullet$ {\bf Conclusion of the proof of Lemma~\ref{FastOsLem}:} Define $\mathcal{E}_{2}=\partial_{t}\mathcal{E}_{3}$, where $\mathcal{E}_{3}$ is given by~\eqref{defe3}. Then by~\eqref{calc} it satisfies the conclusion of Lemma~\ref{FastOsLem}. The term $\widetilde{\mathcal{E}_{1}}$ is then  defined as the sum of~$\mathcal{O}^{t}_{1}$ and the remaining terms in~\eqref{tipp2}, and by the previous estimates it satisfies the claim.
\end{proof}
 
\subsection{The resonant part: study of \texorpdfstring{$\mathcal{N}_{0}^{t}$}{N0}  }
The main contribution to $\mathcal N^{t}$ comes from the resonant part of the nonlinearity $\mathcal{N}^{t}_0$ which was defined in~\eqref{defN0} as
\begin{equation*}
\mathcal{N}^{t}_0[F, G, H]=\sum_{(p,q,r,s)\in \Gamma_0} \Pi_p \mathcal{N}^{t}[F_q, G_r, H_s].
\end{equation*}
It can also    be written 
\begin{equation}\label{defPi}
\mathcal{N}^{t}_0[F,G,H]= \frac1{\pi} \int_{s=0}^{\pi}  e^{it\partial^{2}_{x}-is\HH_{d}}\Big( e^{-it \partial^{2}_{x}+is\HH_{d}}F\cdot e^{it\partial^{2}_{x}-is\HH_{d}}\overline{G}\cdot e^{-it\partial^{2}_{x}+is\HH_{d}}H\Big)ds,
\end{equation}
and this latter formulation will be convenient to exploit Strichartz estimates.\ligne

We have the following result

\begin{lemma}\label{lem46} Assume that $N\geq7$, let $t\geq1$ and denote by 
\begin{equation*}
\Vert F\Vert_{\tilde{Z}_t}:=\Vert F\Vert_{Z}+(1+\vert t\vert)^{-\delta}\Vert F\Vert_{S},                                                    
\end{equation*}
then  
\begin{align}
\|\mathcal{N}^{t}_0[F^{a}, F^{b}, F^{c}]\|_{S}\lesssim& (1+|t|)^{-1}\max_{\s\in\mathfrak{S}_3}\Vert F^{\sigma(a)}\Vert_{\tilde{Z}_t}  \Vert F^{\sigma(b)}\Vert_{\tilde{Z}_t} \Vert F^{\sigma(c)}\Vert_{S}\label{est4}
\end{align}
and
\begin{equation}\label{est41}
\begin{split}
\Vert \mathcal{N}^{t}_0[F^{a},F^{b},F^{c}]\Vert_{S^+}\lesssim& (1+\vert t\vert)^{-1}\max_{\s\in\mathfrak{S}_3}\Vert F^{\sigma(a)}\Vert_{\tilde{Z}_t}  \Vert F^{\sigma(b)}\Vert_{\tilde{Z}_t} \Vert F^{\sigma(c)}\Vert_{S^+}\\
&\quad+(1+\vert t\vert)^{-1+2\delta}\max_{\s\in\mathfrak{S}_3}\Vert F^{\sigma(a)}\Vert_{\tilde{Z}_t}  \Vert F^{\sigma(b)}\Vert_{S}  \Vert F^{\sigma(c)}\Vert_{S}.
\end{split}
\end{equation}
Moreover, we have that 
\begin{equation}\label{di1}
\big\| \mathcal N_0^{t}[F,G, H]-\frac{\pi}{t}\mathcal R[F, G, H]\big\|_{Z}\lesssim (1+|t|)^{-1-10\delta}\|F\|_{S}\|G\|_{S}\|H\|_{S},
\end{equation}
\begin{equation}\label{differNRS}
\big\| \mathcal N_0^{t}[F,G, H]-\frac{\pi}{t}\mathcal R[F, G, H]\big\|_{S}\lesssim (1+|t|)^{-1-20\delta}\|F\|_{S^+}\|G\|_{S^+}\|H\|_{S^+}.
\end{equation}
Furthermore
 \begin{align}
\|\mathcal{R}[F^{a}, F^{b}, F^{c}]\|_{S}\lesssim&  \max_{\s\in\mathfrak{S}_3}\Vert F^{\sigma(a)}\Vert_{\tilde{Z}_t}  \Vert F^{\sigma(b)}\Vert_{\tilde{Z}_t} \Vert F^{\sigma(c)}\Vert_{S}\label{esti1}
\end{align}
and
\begin{equation}\label{esti2}
\begin{split}
\Vert \mathcal{R}[F^{a},F^{b},F^{c}]\Vert_{S^+}\lesssim& \max_{\s\in\mathfrak{S}_3}\Vert F^{\sigma(a)}\Vert_{\tilde{Z}_t}  \Vert F^{\sigma(b)}\Vert_{\tilde{Z}_t} \Vert F^{\sigma(c)}\Vert_{S^+}\\
&\quad+(1+\vert t\vert)^{2\delta}\max_{\s\in\mathfrak{S}_3}\Vert F^{\sigma(a)}\Vert_{\tilde{Z}_t}  \Vert F^{\sigma(b)}\Vert_{S}  \Vert F^{\sigma(c)}\Vert_{S}.
\end{split}
\end{equation}
 \end{lemma}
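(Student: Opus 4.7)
The proof splits naturally into three steps, treating in turn the resonant operator $\mathcal R$, the resonant part $\mathcal N^{t}_0$ of the nonlinearity, and the stationary-phase comparison between them. \textbf{Step 1 (estimates \eqref{esti1}--\eqref{esti2}):} Since $\mathcal F_x \mathcal R[F,G,H](\xi,y) = \mathcal T[\widehat F(\xi,\cdot),\widehat G(\xi,\cdot),\widehat H(\xi,\cdot)](y)$, the operator $\mathcal R$ acts as a pointwise-in-$\xi$ trilinear map in the Hermite variable. Applying the trilinear bound \eqref{StriEst2} at each $\xi$, with two factors in $\HH^1_y$ and one in $L^2_y$, and using the embedding $\sup_\xi\langle\xi\rangle^2\|\widehat F(\xi,\cdot)\|_{\HH^1_y}\lesssim\|F\|_Z$, yields the $L^2_{x,y}$ estimate \eqref{a0}. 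The $\HH^N_y$-components of the $S$-norm follow from \eqref{StriEst}, while the $\partial_x^N$ and $x$-weight components are handled by Leibniz, using that $x$ becomes $i\partial_\xi$ under $\mathcal F_x$ and $\partial_x^N$ becomes $(i\xi)^N$. The additional $(1+|t|)^{2\delta}$ summand in \eqref{esti2} arises when the $x$-weight in $S^+$ is absorbed through $\|xF\|_S\lesssim\|F\|_{S^+}$, which forces the remaining inputs to be measured in $S$ rather than $\tilde Z_t$.

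\textbf{Step 2 (estimates \eqref{est4}--\eqref{est41}).} I would work directly from representation \eqref{defPi}, which exhibits $\mathcal N^{t}_0$ as an average over $s\in[0,\pi]$ of a trilinear product of $1$D Schr\"odinger evolutions twisted by $e^{\pm is\HH_d}$. By H\"older in $x$, the $L^2_{x,y}$-norm of the output is bounded by placing one factor in $L^2_{x,y}$ and two in $L^\infty_{x,y}$; applying the dispersive bound \eqref{estdisp}, $\|e^{it\DD}F\|_{L^\infty_{x,y}}\lesssim|t|^{-1/2}\|F\|_S$, to each of the two latter factors produces the desired $|t|^{-1}$ decay. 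The $s$-average is harmless since the dispersive bound is uniform in $s$, and the $\HH^N_y$ pieces of $S$ are controlled using the unitarity of $e^{is\HH_d}$ on $\HH^N$ together with the transfer principle of Section~\ref{sect6}. The $S^+$ bound \eqref{est41} requires pulling $x$ past the one-dimensional flow via the commutator $[x,e^{it\partial_x^2}]=2it\partial_x e^{it\partial_x^2}$: the first summand in \eqref{est41} comes from $x$ landing on one of the three inputs (bounded by $\|xF\|_S\lesssim\|F\|_{S^+}$), while the second summand with factor $(1+|t|)^{-1+2\delta}$ encodes the commutator contribution $2it\partial_x F$, whose apparent factor of $t$ is reabsorbed thanks to the extra $\partial_x^4$ regularity of $S^+$ via interpolation.

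\textbf{Step 3 (estimates \eqref{di1}--\eqref{differNRS}).} After Fourier,
\[
\mathcal F_x \mathcal N^{t}_0[F,G,H](\xi,y) = \sum_{(p,q,r,s)\in\Gamma_0}\Pi_p \int_{\R^2} e^{2it\eta\kappa}\,\widehat F_q(\xi-\eta,y)\,\overline{\widehat G_r}(\xi-\eta-\kappa,y)\,\widehat H_s(\xi-\kappa,y)\,d\eta\,d\kappa.
\]
The phase $\Phi(\eta,\kappa)=2\eta\kappa$ has a unique non-degenerate critical point at the origin with $|\det\nabla^2\Phi|=4$, so the stationary phase formula gives, schematically,
\[
\int_{\R^2} e^{2it\eta\kappa} a(\eta,\kappa)\,d\eta\,d\kappa = \frac{\pi}{t}a(0,0) + R(t),\qquad |R(t)|\lesssim t^{-1-\alpha}\|a\|_{W^{2,\infty}},
\]
for a suitable $\alpha>0$ determined by a cutoff scale. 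The leading term summed over $(p,q,r,s)\in\Gamma_0$ reconstructs exactly $\frac{\pi}{t}\mathcal F_x\mathcal R[F,G,H](\xi,y)$, while the remainder is estimated by dyadically splitting $(\eta,\kappa)$-space into $\{|\eta\kappa|\lesssim t^{-1+\epsilon}\}$ (Taylor-expand $a$ about the origin) and $\{|\eta\kappa|\gtrsim t^{-1+\epsilon}\}$ (integrate by parts in $(\eta,\kappa)$, as already done in the proof of Lemma~\ref{FastOsLem}). Optimizing the split using $S$-regularity of the inputs gives the bound \eqref{di1} with decay $(1+|t|)^{-1-10\delta}$ in $Z$, and using $S^+$-regularity gives the stronger \eqref{differNRS} with decay $(1+|t|)^{-1-20\delta}$ in $S$.

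\textbf{Main obstacle.} The hardest points are the second summand in \eqref{est41} and the enhanced gain $(1+|t|)^{-1-20\delta}$ in \eqref{differNRS}: in both cases one must balance the Galilean-type commutator $[x,e^{it\partial_x^2}]=2it\partial_x e^{it\partial_x^2}$ (and, in Step~3, the second derivatives $\partial_\eta^2,\partial_\kappa^2,\partial_\eta\partial_\kappa$ of the amplitude $a$, which correspond to $x^2$-weights in physical space) against the additional regularity encoded in $S^+$. It is precisely this balance which dictates the definition of $S^+$ with four extra $x$-derivatives and an $x$-weight of one power, and a careful dyadic bookkeeping in the $(\eta,\kappa)$-decomposition is needed to turn the formal $W^{2,\infty}$-stationary-phase remainder into the clean $L^2$-based bounds required by the main theorems.
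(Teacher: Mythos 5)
Your outline reproduces the broad architecture the paper uses (transfer principle, duality with Hermite Strichartz, stationary phase), but there are two genuine gaps that would make your argument prove \emph{weaker} statements than the ones claimed.

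\textbf{Gap 1: Step 2 cannot produce $\tilde Z_t$ norms.} Your proof of \eqref{est4} places one factor in $L^2_{x,y}$ and two in $L^\infty_{x,y}$ via the crude dispersive bound \eqref{estdisp}, $\Vert e^{it\DD}F\Vert_{L^\infty_{x,y}}\lesssim |t|^{-1/2}\Vert F\Vert_S$. This yields
$(1+|t|)^{-1}\Vert F^{\sigma(a)}\Vert_{L^2_{x,y}}\Vert F^{\sigma(b)}\Vert_S\Vert F^{\sigma(c)}\Vert_S$,
and after the transfer principle one gets three $S$-norms, not two $\tilde Z_t$ norms. Since $\Vert F\Vert_{\tilde Z_t}=\Vert F\Vert_Z+(1+|t|)^{-\delta}\Vert F\Vert_S$ is strictly weaker than $\Vert F\Vert_S$ for large $t$, and the bootstrap only controls $\Vert F(t)\Vert_Z$ uniformly while $\Vert F(t)\Vert_S$ is allowed to grow like $t^\delta$, the distinction is precisely what makes the bootstrap close. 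The paper instead applies Lemma~\ref{Lem32} pointwise in $x$, Cauchy--Schwarz in $x$, and then the \emph{mixed} dispersive estimate \eqref{disp}: $\Vert e^{-it\partial^2_x}F\Vert_{L^\infty_x \HH^1_y}\lesssim \langle t\rangle^{-1/2}\bigl(\Vert F\Vert_Z^2+\langle t\rangle^{-1/4}(\Vert xF\Vert_{L^2}^2+\Vert F\Vert_{H^N}^2)\bigr)^{1/2}$, whose structure --- $L^\infty$ in $x$ but $\HH^1$ in $y$ and interpolation between $Z$ and $S$ data --- is exactly what produces $\tilde Z_t$. Relatedly, your explanation of the $(1+|t|)^{2\delta}$ term in \eqref{est41} via the Galilean commutator $[x,e^{it\partial_x^2}]$ reabsorbed by $\partial_x^4$ is not what the paper does; that term comes from applying the transfer principle with the bound $\Vert xF\Vert_Z\lesssim T^{-\delta}\Vert F\Vert_{S^+}+T^{2\delta}\Vert F\Vert_S$ (cf.\ \cite[estimate (3.31)]{HPTV}).

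\textbf{Gap 2: Step 3 is missing the spatial localization.} Your stationary-phase remainder bound $|R(t)|\lesssim t^{-1-\alpha}\Vert a\Vert_{W^{2,\infty}}$ requires two $(\eta,\kappa)$-derivatives of the amplitude, i.e., two powers of $x$ on the inputs, and you propose to pay for them with ``$S$-regularity.'' But $S$ carries only one $x$-weight and $S^+$ only two, so without further reduction the weights do not close the estimate. The paper first decomposes $F=F_c+F_f$ with $F_c=\varphi(t^{-1/4}x)F$, shows the far part $F_f$ gives an acceptable error (using $\Vert F_f\Vert_{L^2}\lesssim t^{-1/4}\Vert F\Vert_S$ etc.\ in \eqref{aux32}--\eqref{aux33}), and then invokes \cite[Lemma~3.10]{HPTV} which is precisely a stationary-phase estimate \emph{for inputs supported in $\{|x|\le t^{1/4}\}$}. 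That support constraint is what converts the formal $W^{2,\infty}$ cost on the amplitude into the controlled $t^{-11/10}$ remainder without requiring more $x$-weights than $S$ or $S^+$ provide. Your dyadic split in $|\eta\kappa|$ is a reasonable heuristic but does not by itself control the $\eta$- and $\kappa$-derivatives of $\widehat F$; the $x$-localization step is essential and cannot be skipped.

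Your Step~1 for the $\mathcal{R}$ bounds \eqref{esti1}--\eqref{esti2} is fine: since $\mathcal R$ is pointwise in $\xi$, the $Z$-based version \eqref{a2} already dominates the $\tilde Z_t$-based statement, so the simpler argument suffices there.
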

 
\begin{proof} $\bullet$ We  start with~\eqref{est4}. By  Lemma~\ref{ControlSS+}, it is sufficient to prove that 
\begin{equation}\label{N0L2}
\Vert \mathcal{N}^{t}_0[F^{a},F^{b},F^{c}]\Vert_{L^2_{x,y}}\lesssim (1+\vert t\vert)^{-1}\min_{\sigma\in\mathfrak{S}_3}\Vert F^{\sigma(a)}\Vert_{L^2_{x,y}}\Vert F^{\sigma(b)}\Vert_{\tilde{Z}_t}\Vert F^{\sigma(c)}\Vert_{\tilde{Z}_t},
\end{equation}
and by symmetry, we only consider the case $\s=id$. Let $G\in L^2_{x,y}$, and denote by $u^{j}(t,x)= e^{-it \partial^{2}_{x} }F^{j}$ then by~\eqref{defPi}
\begin{equation*}
\<    \mathcal{N}^{t}_0[F^{a},F^{b},F^{c}],G     \>_{L^2_{x,y}}=
\frac1{\pi} \int_{x\in \R} \int_{s=0}^{\pi} \int_{y\in \R^{d}} \Big( e^{ is\HH_{d}}u^{a}\cdot e^{-is\HH_{d}}\overline{u^{b}}\cdot e^{ is\HH_{d}}u^{c}\Big) e^{it\partial^{2}_{x}-is\HH_{d}} \ov{G} dy ds dx   .
\end{equation*}
By Lemma~\ref{Lem32}
\begin{multline*}
\Big\vert  \int_{s=0}^{\pi} \int_{y\in \R^{d}} \Big( e^{ is\HH_{d}}u^{a}\cdot e^{-is\HH_{d}}\overline{u^{b}}\cdot e^{ is\HH_{d}}u^{c}\Big) e^{it\partial^{2}_{x}-is\HH_{d}} \ov{G} dy ds\Big\vert  \leq \\
\leq C \|e^{-it\partial^{2}_{x}}G\|_{L_{y}^{2}}\|e^{-it\partial^{2}_{x}}F^{a}\|_{L_{y}^{2}}\|e^{-it\partial^{2}_{x}}F^{b}\|_{\HH^{1}_{y}}\|e^{-it\partial^{2}_{x}}F^{c}\|_{\HH^{1}_{y}},
\end{multline*}
which in turn implies by Cauchy-Schwarz
\begin{eqnarray*}
\big\vert \<    \mathcal{N}^{t}_0[F^{a},F^{b},F^{c}],G     \>_{L^2_{x,y}}\big\vert &\leq& C  \|e^{-it\partial^{2}_{x}}G\|_{L^{2}_{x}L_{y}^{2}}\|e^{-it\partial^{2}_{x}}F^{a}\|_{L^{2}_{x}L_{y}^{2}}\|e^{-it\partial^{2}_{x}}F^{b}\|_{L^{\infty}_{x}\HH^{1}_{y}}\|e^{-it\partial^{2}_{x}}F^{c}\|_{L^{\infty}_{x}\HH^{1}_{y}}\nonumber \\
&=& C  \| G\|_{L^{2}_{x}L_{y}^{2}}\| F^{a}\|_{L^{2}_{x}L_{y}^{2}}\|e^{-it\partial^{2}_{x}}F^{b}\|_{L^{\infty}_{x}\HH^{1}_{y}}\|e^{-it\partial^{2}_{x}}F^{c}\|_{L^{\infty}_{x}\HH^{1}_{y}}.
\end{eqnarray*}
We claim that 
\begin{equation}\label{est7}
\|e^{-it\partial^{2}_{x}}F\|_{L^{\infty}_{x}\HH^{1}_{y}}\leq C(1+|t|)^{-1/2}\|F\|_{\tilde{Z}_t}.
\end{equation}
Actually, one can prove that  for $N\geq 7$  
\begin{equation} \label{disp}
\|e^{-it\partial^{2}_{x}}F\|_{L^{\infty}_{x}\HH^{1}_{y}}\leq C 
\langle t \rangle^{-1/2}
\Big(
\|F\|_{Z}^2+\langle t\rangle^{-\frac{1}{4}}\big(\|x F\|_{L_{x,y}^2}^2+\|F\|_{H_{x,y}^N}^2\big)
\Big)^{1/2},
\end{equation}
which implies~\eqref{est7}. We refer to~\cite[Lemma 7.3]{HPTV} for a proof of~\eqref{disp} in a similar context. \medskip

 $\bullet$ The proof of~\eqref{est41} follows from~\eqref{N0L2}, the second part of Lemma~\ref{ControlSS+}, and the fact that 
$$
\|xF\|_{Z}\lesssim T^{-\delta}\|F\|_{S^+}+T^{2\delta}\|F\|_{S}
$$ 
whose elementary proof is given in~\cite[Estimate (3.31)]{HPTV}.\medskip

$\bullet$ We now turn to the proof of~\eqref{di1} and~\eqref{differNRS}.  We first decompose
$$
F=F_c +F_f, \quad G=G_c+G_f, \quad H=H_c+H_f
$$
where $F_c=F\varphi(t^{-1/4}x)$ and $G_c, H_c$ are similarly defined. We claim that the main contribution to~\eqref{di1} and~\eqref{differNRS} comes from the ``close" components $F_c, G_c,$ and $H_c$. Indeed, we show that 
\begin{multline}\label{aux32}
\big\|\mathcal N_0^{t}[F,G,H]-\mathcal N_0^{t}[F_c, G_c, H_c]\big\|_{Z}+t^{-1}\big\|\mathcal R[F,G,H]-\mathcal R[F_c, G_c, H_c]\big\|_{Z}\lesssim\\[3pt]
 \lesssim (1+|t|)^{-\frac{33}{32}}\|F\|_{S}\|G\|_{S}\|H\|_{S}
\end{multline}
and 
\begin{multline}\label{aux33}
\big\|\mathcal N_0^{t}[F,G,H]-\mathcal N_0^{t}[F_c, G_c, H_c]\big\|_{S}+t^{-1}\big\|\mathcal R[F,G,H]-\mathcal R[F_c, G_c, H_c]\big\|_{S}\lesssim \\[3pt]
 \lesssim (1+|t|)^{-\frac{33}{32}}\|F\|_{S^+}\|G\|_{S^+}\|H\|_{S^+}.
\end{multline}
Estimate~\eqref{aux32} follows from~\eqref{ZSNorm} and the following inequalities: Here $\widetilde G$ denotes either $G$ or $G_f$ and similarly for $\widetilde H$.
\begin{align*}
\|\mathcal N_0^{t}[F_f,\widetilde G, \widetilde H]\|_{S}+t^{-1}\|\mathcal R[F_f,\widetilde G, \widetilde H]\|_{S}\lesssim& (1+|t|)^{-1}\|F\|_{S}\|G\|_{S}\|H\|_{S}\\[3pt]
\|\mathcal N_0^{t}[F_f,\widetilde G, \widetilde H]\|_{L^2_{x,y}}+t^{-1}\|\mathcal R[F_f,\widetilde G, \widetilde H]\|_{L^2_{x,y}}\lesssim& (1+|t|)^{-1}\|F_f\|_{L^2}\|G\|_{S}\|H\|_{S}\\
\lesssim&(1+|t|)^{-5/4}\|F\|_{S}\|G\|_{S}\|H\|_{S}.
\end{align*}
These inequalities follow directly from~\eqref{N0L2},~\eqref{a0} and~\eqref{a2}. Similarly, estimate~\eqref{aux33} follows from~\eqref{est4},~\eqref{a2}, and the fact that $\|F_f\|_{S}\lesssim t^{-1/4}\|F\|_{S^+}$. The contribution of the terms involving $G_f$ and $H_f$ is treated similarly.

As a consequence,~\eqref{di1} and~\eqref{differNRS} reduce to:
\begin{align}
\big\| \mathcal N_0^{t}[F_c,G_c, H_c]-\frac{\pi}{t}\mathcal R[F_c, G_c, H_c]\big\|_{Z}\lesssim& (1+|t|)^{-1-10\delta}\|F\|_{S}\|G\|_{S}\|H\|_{S}\label{d1}\\[3pt]
\big\| \mathcal N_0^{t}[F_c,G_c, H_c]-\frac{\pi}{t}\mathcal R[F_c, G_c, H]_c\big\|_{S}\lesssim& (1+|t|)^{-1-20\delta}\|F\|_{S^+}\|G\|_{S^+}\|H\|_{S^+}.\label{d2}
\end{align}

For this, we will borrow the following lemma from~\cite[Lemma 3.10]{HPTV}. 
\begin{lemma}\label{lemma3.10}
Let $t\geq 1$ and suppose that $f^{a}, f^{b}, f^{c} \in L^2(\R)$ are supported in the set $\big\{|x|\leq t^{1/4}\big\}$. Then there holds that for any integer $m$
\begin{multline*} 
|\xi|^m \left|\int_{\R^2} e^{2it\eta \kappa} \widehat f^{a}(\xi-\eta) \overline{\widehat f^{b}}(\xi-\eta-\kappa) \widehat f^{c}(\xi-\kappa) d\eta d\kappa -\frac{\pi}{t}\widehat f^{a}(\xi)\widehat{f^{b}}(\xi) \widehat{f^{c}}(\xi)\right|\lesssim \\
\lesssim t^{-\frac{11}{10}}\min_{\sigma \in \mathfrak G_3} \|f^{\sigma(a)}\|_{H^m}\|f^{\sigma(b)}\|_{L^2}\|f^{\sigma(c)}\|_{L^2}.
\end{multline*}
\end{lemma}

With this lemma in hand, we can proceed with the proof of~\eqref{d1} and~\eqref{d2}. Again we obtain the $Z-$norm estimate~\eqref{d1} using~\eqref{ZSNorm},~\eqref{est4},~\eqref{a2}, and the fact that 
\begin{align*}
&\big\| \mathcal N_0^{t}[F_c,G_c, H_c]-\frac{\pi}{t}\mathcal R[F_c, G_c, H_c]\big\|_{L^2_{x,y}}\\
&=\left\|(2\pi)^{-1}\int_{-\pi}^{\pi}e^{-is\HH} \left[\int_{\R^2}e^{2it\eta \kappa} \widehat{e^{is\HH}F_c}(\xi-\eta) \overline{\widehat{e^{is\HH}G_c}}(\xi-\eta-\kappa) \widehat{e^{is\HH}H_c}(\xi-\kappa) \, d\eta\, d\kappa\right. \right.\\
 &\left. \left.\qquad\qquad \qquad \qquad \qquad\qquad \qquad -\frac{\pi}{t}\widehat{e^{is\HH}F_c}(\xi) \overline{\widehat{e^{is\HH}G_c}}(\xi) \widehat{e^{is\HH}H_c}(\xi)\right]\,ds\right\|_{L_{\xi, y}^2}\lesssim\\
 &\lesssim t^{-\frac{11}{10}}\left\|\int_{-\pi}^{\pi}\|e^{is\HH}F_c(\cdot, y)\|_{H_x^2} \|e^{is\HH}G_c(\cdot, y)\|_{L_x^2} \|e^{is\HH}H_c(\cdot, y)\|_{L_x^2}ds\right\|_{L^2_y}\lesssim t^{-\frac{11}{10}}\|F\|_{S}\|G\|_{S}\|H\|_{S}.
\end{align*}
Arguing as above and using~\eqref{HandT} one also obtains~\eqref{d2}. \medskip

$\bullet$ The estimates~\eqref{esti1} and~\eqref{est2} are obtained in a similar way to~\eqref{est4} and~\eqref{est41}. we do not write the details. \medskip

This ends the proof of Lemma~\ref{lem46}.
\end{proof}

\subsection{Conclusion of the proof of Proposition~\ref{StrucNon}} We decompose the nonlinearity as follows
\begin{multline*}
 \mathcal{N}^{t}[F,G,H]=\sum_{\substack{A,B,C\\\max(A,B,C)\geq T^{\frac{1}{6}}}}\mathcal{N}^{t}[Q_{A}F,Q_{B}G,Q_CH]+\widetilde{\mathcal N^{t}}[Q_{\leq T^{\frac{1}{6}}}F,Q_{\leq T^{\frac{1}{6}}}G,Q_{\leq T^{\frac{1}{6}}}H]+\\
 +
{\mathcal N}_{0}^{t}[Q_{\leq T^{\frac{1}{6}}}F,Q_{\leq T^{\frac{1}{6}}}G,Q_{\leq T^{\frac{1}{6}}}H]\,.
 \end{multline*}
The first term above contributes to $\mathcal E_{1}$ by Lemma~\ref{BilEf}. The second term is the one studied in  Lemma~\ref{FastOsLem}, and can therefore be written as $\widetilde{\mathcal E_{1}}+\mathcal E_{2}$ with $\widetilde{\mathcal E_{1}}$ giving an acceptable contribution to $\mathcal E_{1}$. Now, decompose the third term  as
\begin{multline*}
{\mathcal N}_{0}^{t}[Q_{\leq T^{\frac{1}{6}}}F,Q_{\leq T^{\frac{1}{6}}}G,Q_{\leq T^{\frac{1}{6}}}H]=\frac{\pi}{t}\mathcal R[F,G,H]-\frac{\pi}{t}\sum_{\substack{A,B,C\\\max(A,B,C)\geq T^{\frac{1}{6}}}}\mathcal{R}[Q_{A}F,Q_{B}G,Q_CH]\\
\qquad+\Big({\mathcal N}_{0}^{t}[Q_{\leq T^{\frac{1}{6}}}F,Q_{\leq T^{\frac{1}{6}}}G,Q_{\leq T^{\frac{1}{6}}}H]-\frac{\pi}{t}\mathcal{R}[Q_{\leq T^{\frac{1}{6}}}F,Q_{\leq T^{\frac{1}{6}}}G,Q_{\leq T^{\frac{1}{6}}}H]\Big).
\end{multline*} 
By using the first four estimates in Lemma~\ref{lem46}, we check that  the   third term  contributes to $\mathcal{E}_{1}$.  The second term is estimated as in Lemma~\ref{BilEf}.
This finishes the proof of Proposition~\ref{StrucNon}.

\section{Proof of the main results} \label{sect5}

We give here the main lines of the proofs of Theorems~\ref{ModScatThm} and~\ref{ExMWO}. We do not give all the details, since the argument is close to the corresponding results in~\cite{HPTV}.

\subsection{Modified wave operators}
 
 The next result implies Theorem~\ref{ModScatThm}
\begin{theorem}\label{thm51}
There exists $\varepsilon>0$ such that if $U_0\in S^+$ satisfies
\begin{equation*} 
\Vert U_0\Vert_{S^+}\le\varepsilon,
\end{equation*}
and if $\widetilde{G}$ is the solution of~\eqref{RSS} with initial data $U_0$, then there exists $U$ a solution of~\eqref{CNLS} such that
$e^{-it\mathcal{D}}U(t)\in \mathcal{C}\big([0,+\infty); S\big)$ and
\begin{equation*}
\begin{split}
\Vert e^{-it\mathcal{D}}U(t)-\widetilde{G}(\pi\ln t)\Vert_{S}\longrightarrow 0\,\;\;\hbox{ as }\,\;\;t\longrightarrow+\infty.
\end{split}
\end{equation*}
\end{theorem}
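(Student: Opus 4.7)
\medskip

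\textbf{Proof proposal.} The plan is to construct $U$ by a contraction argument applied to the profile equation on a half-line $[T,+\infty)$ with $T$ large, following the strategy of~\cite{HPTV} but using the nonlinearity structure of Proposition~\ref{StrucNon} and the stability of~\eqref{RSS} given by Lemma~\ref{lem34}. First I would set $F(t):=e^{-it\DD}U(t)$, so that $F$ solves $i\partial_t F=\mathcal{N}^{t}[F,F,F]$, and split $F(t)=\widetilde G(\pi\ln t)+B(t)$ where $\widetilde G$ is the solution of~\eqref{RSS} given in the statement. Since $i\partial_t\widetilde G(\pi\ln t)=\frac{\pi}{t}\mathcal{R}[\widetilde G,\widetilde G,\widetilde G]$, the unknown $B$ must satisfy
\begin{equation*}
i\partial_t B(t)=\mathcal{N}^{t}[F,F,F]-\frac{\pi}{t}\mathcal{R}[\widetilde G,\widetilde G,\widetilde G].
\end{equation*}
I would then seek $B$ with $\|B(t)\|_{S}\to 0$ as $t\to+\infty$ by solving the integral equation
\begin{equation*}
B(t)=i\int_t^{+\infty}\Big(\mathcal{N}^{s}[F,F,F]-\frac{\pi}{s}\mathcal{R}[\widetilde G,\widetilde G,\widetilde G]\Big)\,ds,\qquad F=\widetilde G(\pi\ln s)+B,
\end{equation*}
via a Picard iteration on $[T,+\infty)$.

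\medskip

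To set up the contraction, I would use Proposition~\ref{StrucNon} to write
\begin{equation*}
\mathcal{N}^{s}[F,F,F]-\frac{\pi}{s}\mathcal{R}[\widetilde G,\widetilde G,\widetilde G]=\mathcal{E}^{s}[F,F,F]+\frac{\pi}{s}\bigl(\mathcal{R}[F,F,F]-\mathcal{R}[\widetilde G,\widetilde G,\widetilde G]\bigr),
\end{equation*}
and use Proposition~\ref{StrucNon} to bound $\int_T^{+\infty}\mathcal{E}^{s}[F,F,F]\,ds$ (summing the dyadic pieces $\int_{T/2}^{T}$ and writing $\mathcal{E}_2=\partial_s\mathcal{E}_3$ with $\|\mathcal{E}_3\|_S\lesssim s^{-1/10}$ to absorb the boundary terms when integrating by parts). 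The second, trilinear difference term is handled by the tame estimate~\eqref{esti1} from Lemma~\ref{lem46}, which gives $\|\mathcal{R}[F,F,F]-\mathcal{R}[\widetilde G,\widetilde G,\widetilde G]\|_S\lesssim\|B\|_S\max\{\|F\|_{\tilde Z_t},\|\widetilde G\|_{\tilde Z_t}\}^{2}$; combined with Lemma~\ref{lem34}(1), which ensures $\|\widetilde G(\pi\ln s)\|_Z=\|U_0\|_Z\lesssim\eps$ and $\|\widetilde G(\pi\ln s)\|_{S^{+}}\lesssim s^{\delta'}$ with $\delta'\lesssim\eps^{2}$, this produces a factor $\eps^{2}\int_t^{+\infty}s^{-1}\|B(s)\|_S ds$ which is absorbed by choosing $T$ large enough and working in the weighted norm $\sup_{t\geq T}(\ln t)^{-C\eps^{2}}\|B(t)\|_S$ (or equivalently a norm with prefactor $t^{-C\eps^2}$). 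Contraction then closes for the difference of two candidates by linearizing the cubic terms in exactly the same way.

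\medskip

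Next, I would need to propagate the $S^{+}$-control on $F$ required to apply Proposition~\ref{StrucNon} and to justify the various $S$-bounds on $\mathcal{R}$. For this I would augment the fixed point by working on the space of $B$ such that
\begin{equation*}
\sup_{t\geq T}\Bigl\{(\ln t)^{-C\eps^{2}}\|B(t)\|_{S}+t^{-5\delta}\|F(t)\|_{S^{+}}+t^{1-7\delta}\|\partial_t F(t)\|_{S^{+}}\Bigr\}\leq 2\eps,
\end{equation*}
which matches the norm $X^{+}_{[T,+\infty)}$ from Section~\ref{sect2}. The $S^{+}$ bound on $F$ is obtained from the triangle inequality $\|F\|_{S^{+}}\leq\|\widetilde G(\pi\ln t)\|_{S^{+}}+\|B\|_{S^{+}}$ and a second contraction argument in the $S^{+}$-norm using the Lipschitz bound~\eqref{esti2} in Lemma~\ref{lem46} and the $S^{+}$-parts of Proposition~\ref{StrucNon}; the small growth rates $t^{5\delta}$ are compatible since Proposition~\ref{StrucNon} provides $T^{-5\delta}\|\int_{T/2}^T\mathcal{E}_i\|_{S^{+}}\lesssim 1$.

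\medskip

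The main obstacle I expect is the logarithmic loss created by the term $\frac{\pi}{s}\bigl(\mathcal{R}[F,F,F]-\mathcal{R}[\widetilde G,\widetilde G,\widetilde G]\bigr)$: integrating from $t$ to $+\infty$ does not yield integrability in $s$, only a $\log$, so one cannot directly contract in the $L^{\infty}_{t}S$ norm. The remedy, as above, is to use the Gr\"onwall-type stability already available in Lemma~\ref{lem34}(2) by weighting the $S$ norm with a small power $t^{C\eps^{2}}$ or $(\ln t)^{C\eps^2}$; the smallness of $\|U_0\|_{S^{+}}$ then makes the constant in the $S$-estimate for $B$ small enough to close the fixed point. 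Once $B$ is constructed on $[T,+\infty)$, local well-posedness of~\eqref{CNLS} backward in time in $H^{N}$ extends $U(t)=e^{it\DD}(\widetilde G(\pi\ln t)+B(t))$ to a solution in $\mathcal{C}([0,+\infty);H^N)$, and the asymptotic $\|e^{-it\DD}U(t)-\widetilde G(\pi\ln t)\|_S=\|B(t)\|_S\to 0$ follows directly from the construction.
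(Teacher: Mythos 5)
Your plan is the same as the paper's: set $B(t)=e^{-it\DD}U(t)-\widetilde G(\pi\ln t)$, solve the integral equation
\begin{equation*}
B(t)=i\int_t^{+\infty}\Big(\mathcal N^{\sigma}[B+G,B+G,B+G]-\frac{\pi}{\sigma}\mathcal R[G,G,G]\Big)d\sigma
\end{equation*}
by a contraction on $[T,\infty)$, feed the $\mathcal E$ pieces into Proposition~\ref{StrucNon}, feed the resonant difference into the tame estimates of Lemma~\ref{lem46}, propagate the needed $S^{+}$-control on the profile $B+G$ via Lemma~\ref{lem34}, and finally continue $U$ backward in time by local well-posedness. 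This is precisely the scheme of Theorem~\ref{thm51}.

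The one place where your proposal would fail as written is the weight. A logarithmic weight cannot close: taking $\|B(s)\|_S\lesssim M(\ln s)^{-\alpha}$ (which is the sign you need in order for the norm to force $\|B(t)\|_S\to 0$; the sign $(\ln t)^{-C\varepsilon^2}$ in your first display allows $\|B\|_S$ to grow) and integrating the term $\varepsilon^2 s^{-1}\|B(s)\|_S$ from $t$ to $\infty$ gives $\sim \varepsilon^2(\ln t)^{1-\alpha}$, so multiplying back by the weight $(\ln t)^{\alpha}$ leaves an unbounded factor $\ln t$. The term $s^{-1}\|B(s)\|_S$ is borderline: one must gain a genuine polynomial power $s^{-\gamma}$ from the decay of $\|B\|_S$. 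The paper therefore uses
\begin{equation*}
\Vert B\Vert_{\mathfrak{A}}:=\sup_{t\geq 1}\Big\{(1+t)^\delta\Vert B(t)\Vert_{S}+(1+t)^{2\delta}\Vert B(t)\Vert_{Z}+(1+t)^{1-\delta}\Vert\partial_t B(t)\Vert_{S}\Big\},
\end{equation*}
with the fixed $\delta<10^{-3}$ from Section~\ref{sect2}. The $\mathcal E$ pieces then contribute $\sum_{T'\geq t}T'^{-2\delta}\lesssim t^{-2\delta}$ via the $S^{+}$-improved estimate of Proposition~\ref{StrucNon} (which is indeed why the $S^{+}$ control you identify is indispensable), and the resonant difference contributes $\varepsilon^2\int_t^\infty s^{-1-\delta}\,ds\lesssim \varepsilon^2\delta^{-1}t^{-\delta}$, so the contraction constant is $O(\varepsilon^2/\delta)$ and closes for $\varepsilon$ small. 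Your polynomial alternative $t^{C\varepsilon^2}$ from the last paragraph also works, but only after fixing $C>\pi$ and requiring $C\varepsilon^2\leq 2\delta$ so as not to outrun the $T^{-2\delta}$ gain on the error pieces; the $\varepsilon$-independent $\delta$-weight is simpler and matches the $T^{\pm\delta}$ powers that Proposition~\ref{StrucNon} actually produces.
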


\begin{proof} The proof is similar to the proof of~\cite[Theorem 5.1]{HPTV}, and we refer to it for the details.  Set $G(t)=\widetilde{G}(\pi\ln t)$, $F(t)= e^{-it\mathcal{D}}U(t)-G(t)$ and  define the mapping
\begin{equation*}
\Phi(F)(t)=i\int_t^\infty\left\{\mathcal{N}^\sigma[F+G,F+G,F+G]-\frac{\pi}{\sigma}\mathcal{R}[G,G,G]\right\}d\sigma.
\end{equation*}
To prove Theorem~\ref{thm51} it is enough to find a fixed point for $\Phi$ in a suitable space. Actually,  using Proposition~\ref{StrucNon}, we can show that  for  $\varepsilon$ small enough,   there exists $\varepsilon_{1}$ such that $\Phi$ defines a contraction on the complete metric space $\mathfrak{A}$ defined by
\begin{equation*}
\begin{split}
\mathfrak{A}:=&\big\{F\in \mathcal{C}^1\big([1,+\infty);S\big)\,:\,\,\Vert F\Vert_\mathfrak{A}\le\varepsilon_{1}\big\}\\[3pt]
\Vert F\Vert_\mathfrak{A}:=&\sup_{t\geq 1}\left\{(1+\vert t\vert)^\delta\Vert F(t)\Vert_{S}+(1+\vert t\vert)^{2\delta}\Vert F(t)\Vert_{Z}+(1+\vert t\vert)^{1-\delta}\Vert \partial_tF(t)\Vert_{S}\right\}.
\end{split}
\end{equation*}
This defines $e^{-it\mathcal{D}}U(t)$ for $t\geq 1$, and this function can be continued for $t\in [0,1]$ in such a way that $e^{-it\mathcal{D}}U(t)\in \mathcal{C}\big([0,+\infty); S\big)$.
\end{proof}
\subsection{Small data scattering}\label{SMS}

We now state a more precise version of Theorem~\ref{ModScatThm} which is the main result of this paper.

\begin{theorem}\label{ModScatThm2}
Let $U_0\in S^+$ such that  $\Vert U_0\Vert_{S^+}\le\varepsilon$ and consider the corresponding solution $U$ of~\eqref{CNLS}. Then if  $\varepsilon>0$ is small enough,  there exists a solution $G$ of~\eqref{RSS}  so that 
\begin{equation*} 
\Vert e^{-it\mathcal{D}}U(t)-G(\pi\ln t)\Vert_{S}\longrightarrow 0\qquad\hbox{ as \;\;}t\longrightarrow+\infty.
\end{equation*}
\end{theorem}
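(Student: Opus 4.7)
Setting $F(t) := e^{-it\mathcal{D}}U(t)$, the problem becomes $i\partial_t F = \mathcal{N}^{t}[F,F,F]$ with $F(0) = U_0$, and we seek a solution $G$ of \eqref{RSS} such that $\|F(t) - G(\pi\ln t)\|_{S} \to 0$. By Proposition~\ref{StrucNon} we decompose
\[
\mathcal{N}^{t}[F,F,F] = \tfrac{\pi}{t}\mathcal{R}[F,F,F] + \mathcal{E}_1^{t} + \partial_t \mathcal{E}_3^{t}.
\]

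\emph{Step 1: global bootstrap.} A continuity argument shows that if $\varepsilon$ is small enough, then $\|F\|_{X_T^{+}} \lesssim \varepsilon$ for all $T\ge 1$. The weak $Z$-norm is essentially conserved: since $\mathcal{R}$ acts pointwise in the Fourier variable $\xi$ and \eqref{RS} preserves the $\mathcal{H}^1_y$-norm, the principal part $\tfrac{\pi}{t}\mathcal{R}$ preserves $\|\widehat F(\xi,\cdot)\|_{\mathcal{H}^1_y}$ for every $\xi$; hence $\|F(t)\|_Z$ is perturbed only by the integrable-in-$Z$ term $\mathcal{E}_1^{t}$ and by $\partial_t \mathcal{E}_3^{t}$, the latter handled by an integration by parts using $\|\mathcal{E}_3^{t}\|_{S}\lesssim t^{-1/10}$. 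The $S$- and $S^+$-components of the $X_T^{+}$ norm, together with the time-derivative norms, are then propagated via the trilinear estimates in Lemmas~\ref{wup} and~\ref{lem46}, yielding the allowed polynomial growths $t^{\delta}$ and $t^{5\delta}$.

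\emph{Step 2: Cauchy sequence of candidate profiles.} For every $T\ge 1$ let $G^{(T)}$ be the unique global solution of \eqref{RSS} with $G^{(T)}(\pi\ln T) = F(T)$; by Lemma~\ref{lem34}, $\|G^{(T)}\|_{Z} = \|F(T)\|_Z \lesssim \varepsilon$. For $T_1<T_2$ we compare $F$ with $\tilde G(t):= G^{(T_2)}(\pi\ln t)$ on $[T_1,T_2]$: the difference $D := F - \tilde G$ vanishes at $t=T_2$ and obeys
\[
i\partial_t D = \tfrac{\pi}{t}\bigl(\mathcal{R}[F,F,F]-\mathcal{R}[\tilde G,\tilde G,\tilde G]\bigr) + \mathcal{E}_1^{t} + \partial_t\mathcal{E}_3^{t}.
\]
The trilinear difference estimate for $\mathcal{R}$ from Lemma~\ref{lem46} gives $\|\mathcal{R}[F,F,F]-\mathcal{R}[\tilde G,\tilde G,\tilde G]\|_{S} \lesssim \varepsilon^2 \|D\|_{S}$, while the improved $X_T^{+}$-based bound $\|\int_{T/2}^T \mathcal{E}^{t}\,dt\|_{S} \lesssim T^{-2\delta}$ from Proposition~\ref{StrucNon} dyadically controls the source by $T_1^{-2\delta}$. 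Gronwall's inequality in the logarithmic time variable yields
\[
\|F(T_1) - G^{(T_2)}(\pi\ln T_1)\|_{S} \;\lesssim\; (T_2/T_1)^{C\varepsilon^2}\, T_1^{-2\delta}.
\]
Combining with the stability bound \eqref{stab} and iterating over the dyadic scales $R_k = T_1^{2^k}$ shows, under the smallness condition $\varepsilon^2 \ll \delta$, that $\{G^{(T)}(\tau)\}_T$ is Cauchy in $S$ for every fixed $\tau$. The limit $G$ is a global solution of \eqref{RSS} and by construction $\|F(t)-G(\pi\ln t)\|_{S} \to 0$.

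\emph{Main obstacle.} The crux of the argument is that the naive $X_T$-level bound $\|\int_{T/2}^T \mathcal{E}^{t}\,dt\|_{S} \lesssim T^\delta$ diverges and would make any direct Cauchy argument in $S$ fail. Overcoming this requires the full hierarchy $Z\subset S \subset S^{+}$: the extra regularity purchased by bootstrapping in $S^{+}$ buys the improved gain $T^{-2\delta}$, which then must beat the exponential Gronwall factor $(T_2/T_1)^{C\varepsilon^2}$ produced by the cubic coupling in $\mathcal{R}$. Closing the $Z$-bootstrap in Step~1 is also delicate: it depends essentially on the pointwise-in-$\xi$ action of $\mathcal{R}$ (so that the $\mathcal{H}^1_y$-conservation from \eqref{RS} transfers to the $Z$-norm) and on an integration by parts that absorbs the $\partial_t \mathcal{E}_3^{t}$ term, since no naive estimate on $\partial_t\|F\|_Z^2$ would yield a uniform-in-time bound.
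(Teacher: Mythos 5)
Your proposal is correct and follows the same overall strategy as the paper: global bootstrap in the $X_T^+$ hierarchy with the key $Z$-norm conservation coming from the cancellation $\langle i\mathcal{F}_x\mathcal{R}(F,F,F)(\xi),\,\mathcal{F}_x F(\xi)\rangle_{\mathcal{H}^1_y}=0$ together with the integration by parts absorbing $\partial_t\mathcal{E}_3$; then the improved $T^{-2\delta}$ error estimate (available once the $X^+$ bound holds), Gronwall in logarithmic time, the stability estimate~\eqref{stab} for the limit system, and the smallness condition $\varepsilon^2\ll\delta$ to close. The one place where you deviate — and where your write-up is noticeably terser than it needs to be — is the organization of the Cauchy argument in Step~2. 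The paper fixes the scales $T_n=e^{n/\pi}$, compares $F$ with $G_n$ (anchored at $T_n$) only over the \emph{bounded} logarithmic window $[T_n,T_{n+4}]$, so the Gronwall factor is $O(1)$, and then applies~\eqref{stab} once to pull $\|G_{n+1}(T_{n+1})-G_n(T_{n+1})\|_S$ back to time $0$; the exponential loss $e^{Cn\varepsilon^2}$ from stability is then beaten by the exponential gain $e^{-2n\delta/\pi}$ from the source. You instead run Gronwall from $T_2$ all the way down to $T_1$, which produces the unbounded factor $(T_2/T_1)^{C\varepsilon^2}$; as stated, the bound $\|F(T_1)-G^{(T_2)}(\pi\ln T_1)\|_S\lesssim (T_2/T_1)^{C\varepsilon^2}T_1^{-2\delta}$ blows up as $T_2\to\infty$, so it does \emph{not} directly give Cauchyness. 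Your proposed remedy — telescoping over the super-exponential scales $R_k=T_1^{2^k}$ — does in fact repair this (the per-step Gronwall and stability losses $T_1^{2^kO(\varepsilon^2)}$ are dominated by the per-step source gain $T_1^{-2^{k+1}\delta}$ when $\varepsilon^2\ll\delta$, and the resulting series is summable), but this is exactly the nontrivial point and it should be made explicit rather than dismissed in a clause. The paper's choice of exponential scales with bounded Gronwall windows is the cleaner way to write it, though both amount to the same underlying mechanism.
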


\begin{proof} We can follow the proof of~\cite[Theorem 6.1]{HPTV}, and we only give here the main steps. Define  $F(t)=e^{-it\mathcal{D}}U(t)$.\medskip

{\bf Step 1:}
 Let $U_0\in S^+$ such that  $\Vert U_0\Vert_{S^+}\le\varepsilon$. Then  if  $\varepsilon>0$  there is a global solution of~\eqref{CNLS} which satisfies for all $T>0$
 \begin{equation}\label{Beps} 
\Vert F(t)\Vert_{X^+_T}\leq 2\varepsilon.
\end{equation}
 There is no particular difficulty in obtaining this global existence result with small initial condition. The proof essentially relies on a bootstrap (continuous induction) argument, using the estimates of Proposition\;\ref{StrucNon}. However, it's worth mentioning that it is at this point that our argument is not simply perturbative as it essentially relies on the conservation of the $Z-$norm of the limit system~\eqref{RS}.
 
  Observe however, that in order to get the bound of the $Z-$norm one uses the crucial cancelation $\<i\mathcal{F}_{x}\big(\mathcal{R}(F,F,F)\big)(\xi),\mathcal{F}_{x}(F)(\xi)\>_{\HH^{1}_{y}\times \HH^{1}_{y}}=0$, for all $\xi \in \R$. \medskip
 
  {\bf Step 2:}
Define $T_n=e^{n/\pi}$ and $G_n(t)=\widetilde{G}_n(\pi\ln t)$, where $\widetilde{G}_n$ solves~\eqref{RSS} with Cauchy data such that $\widetilde{G}_n(n)=G_n(T_n)=F(T_n)$. With the information we have on the limit system, we can prove that  for all $t\geq T_n$,
\begin{equation*} 
\Vert G_n(t)\Vert_{Z}+(1+\vert t\vert)^{-\delta}\Vert G_n(t)\Vert_{S}+(1+\vert t\vert)^{-5\delta}\Vert G_n(t)\Vert_{S^+}+(1+\vert t\vert)^{1-\delta}\Vert \partial_tG_n(t)\Vert_{S}\lesssim\varepsilon
\end{equation*}
uniformly in $n\geq 0$. \medskip

 {\bf Step 3:}
Set $\kappa=\delta/\pi$. With Gronwall, we can prove that
\begin{equation}\label{Bds}
\sup_{T_n\leq t\leq T_{n+4}}\Vert F(t)-G_n(t)\Vert_{S}\lesssim \varepsilon^3 e^{-n\kappa}.
\end{equation}
  
 {\bf Step 4:} In particular, the above implies that $\|G_{n+1}(T_{n+1})-G_n(T_n)\|_S\lesssim \varepsilon^3 e^{-n\kappa}$. Thanks to~\eqref{stab} it easy to deduce  that 
\begin{equation*}
\Vert \widetilde{G}_n(0)-\widetilde{G}_{n+1}(0)\Vert_{S}\lesssim \varepsilon^3e^{-n\kappa/2}.
\end{equation*}
Observe here that one applies~\eqref{stab} with $t$ replaced by $\ln t$. This is crucial, and that's why we still have an exponentially decaying bound in the previous line.\medskip

Therefore, we see that $\{\widetilde{G}_n(0)\}_n$ is a Cauchy sequence in $S$ and  converges to an element $G_{0,\infty}\in S$ which satisfies 
\begin{equation*}
\Vert G_{0,\infty}\Vert_{Z}\lesssim\varepsilon,\qquad \Vert \widetilde{G}_n(0)-G_{0,\infty}\Vert_{S}\lesssim \varepsilon^3e^{-n\kappa/2}.
\end{equation*}
Denote by $G_\infty(t)=\widetilde{G}_\infty(\pi\ln t)$ with $\widetilde{G}_\infty$ the solution of~\eqref{RSS} with initial data $\widetilde{G}_\infty(0)=G_{0,\infty}$, then by~\eqref{stab} we obtain 
\begin{equation*}
\sup_{[0,T_{n+2}]}\Vert G_\infty(t)-G_n(t)\Vert_{S}\lesssim \varepsilon^3 e^{-n\kappa/4}.
\end{equation*}
From this bound and~\eqref{Bds} we deduce that 
\begin{equation*}
\sup_{T_n\leq t\leq T_{n+1}}\Vert G_\infty(t)-F(t)\Vert_{S} \lesssim \varepsilon^3e^{-n\kappa/4},
\end{equation*}
which shows that $G_{\infty}$ satisfies the result.
\end{proof}

\subsection{Particular dynamics of NLS}
 We first state a result, which has its own interest and  which links the dynamics of~\eqref{CNLS} to the dynamics of the resonant Schr\"odinger equation~\eqref{model}. Actually, we consider here the approximation of NLS with the nonlinearity $\mathcal{N}^{t}_{0}$ instead of $\frac{\pi}{t}\mathcal{R}$. Recall that $\mathcal{N}^{t}_{0}$ is defined in~\eqref{defN0},~\eqref{defPi} and that we have 
\begin{equation*}
\begin{split}
\mathcal{F}_x\mathcal{N}_{0}^{t}[F,G,H](\xi,y)
=& \sum_{(p,q,r,s)\in \Gamma_{0}} \Pi_{p}\int_{\mathbb{R}^2}e^{2it\eta\kappa} \widehat{  F}_{q}(\xi-\eta,y)\overline{\widehat{ G}_{r}}(\xi-\eta-\kappa,y)\widehat{  H}_{s}(\xi-\kappa,y)d\eta d\kappa \\
=&\int_{\mathbb{R}^2}e^{2it\eta\kappa} \mathcal T \left(\widehat{  F}(\xi-\eta,y),{\widehat{ G}}(\xi-\eta-\kappa,y),\widehat{  H}(\xi-\kappa,y)\right)d\eta d\kappa.
\end{split}
\end{equation*}
 Consider the equation
  \begin{equation}\label{eq53}
 i\partial_{t}W=\mathcal{N}^{t}_{0}[W,W,W], \quad (t,x)\in \R\times \R^d.
 \end{equation}
 \begin{theorem}\label{thm52}
There exists $\varepsilon>0$ such that if $U_0\in S^+$ satisfies
\begin{equation*} 
\Vert U_0\Vert_{S^+}\le\varepsilon,
\end{equation*}
and if $W$ is the solution of~\eqref{eq53} with initial data $U_0$, then there exists $U$ a solution of~\eqref{CNLS} such that
$e^{-it\mathcal{D}}U(t)\in \mathcal{C}\big([0,+\infty); S\big)$ and
\begin{equation*}
\begin{split}
\Vert e^{-it\mathcal{D}}U(t)-{W}(t)\Vert_{S}\longrightarrow0\,\;\;\hbox{ as }\,\;\;t\longrightarrow+\infty.
\end{split}
\end{equation*}
\end{theorem}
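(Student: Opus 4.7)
The plan is to run the same contraction mapping as in the proof of Theorem~\ref{thm51}, but for the difference $F := e^{-it\mathcal{D}}U - W$. Writing out the equations, one finds that $U$ solves~\eqref{CNLS} if and only if
\begin{equation*}
i\partial_t F = \mathcal{N}^t[F+W, F+W, F+W] - \mathcal{N}_0^t[W, W, W],
\end{equation*}
so the idea is to seek $F$ as a fixed point of
\begin{equation*}
\Phi(F)(t) := i\int_t^{\infty}\Big\{\mathcal{N}^\sigma[F+W, F+W, F+W] - \mathcal{N}_0^\sigma[W, W, W]\Big\}\,d\sigma,
\end{equation*}
on the space $\mathfrak{A}$ introduced in the proof of Theorem~\ref{thm51}. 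Before doing so, I first need the global existence of $W \in X_\infty^+$ with $\|W\|_{X_\infty^+}\lesssim \varepsilon$, which I would obtain by a bootstrap argument parallel to Step~1 of the proof of Theorem~\ref{ModScatThm2}: the trilinear bounds~\eqref{est4}--\eqref{est41} of Lemma~\ref{lem46} supply the required $(1+t)^{-1}$ decay, while the uniform control of $\|W(t)\|_Z$ follows from the cancellation $\mathrm{Im}\,\langle\mathcal{F}_{x}\mathcal{R}[W,W,W](\xi),\widehat{W}(\xi)\rangle_{\mathcal{H}^{1}_{y}}=0$ (valid for each $\xi$) combined with the pointwise-in-$\xi$ version of~\eqref{di1}.

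Writing $\mathcal{N}^\sigma = \mathcal{N}_0^\sigma + \widetilde{\mathcal{N}^\sigma}$, all terms in the integrand of $\Phi(F)$ that contain at least one factor of~$F$ are trilinear in $(F, W)$ with at least one $F$-entry; these will be controlled exactly as in the proof of Theorem~\ref{thm51}, via Lemma~\ref{wup} and Proposition~\ref{StrucNon}, producing the smallness factor needed for the contraction. The only genuinely new piece is the inhomogeneous forcing
\begin{equation*}
\Phi(0)(t) = i\int_t^{\infty}\widetilde{\mathcal{N}^\sigma}[W(\sigma), W(\sigma), W(\sigma)]\,d\sigma,
\end{equation*}
and this is precisely the object that Lemma~\ref{FastOsLem} was designed for. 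After peeling off the $Q_{\geq T^{1/6}}$ components via Lemma~\ref{BilEf} (applied dyadically on intervals $[T/2, T]$), Lemma~\ref{FastOsLem} will yield a splitting $\widetilde{\mathcal{N}^\sigma}[W,W,W] = \widetilde{\mathcal{E}_1}(\sigma) + \partial_\sigma \mathcal{E}_3(\sigma)$ with $\|\widetilde{\mathcal{E}_1}(\sigma)\|_S \lesssim \sigma^{-1-2\delta}\varepsilon^3$ and $\|\mathcal{E}_3(\sigma)\|_S \lesssim \sigma^{-1/10}\varepsilon^3$. Since $\mathcal{E}_3(\sigma)\to 0$ in $S$, this gives $\|\Phi(0)(t)\|_S \lesssim t^{-2\delta}\varepsilon^3$ together with an analogous $Z$-bound. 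For $\varepsilon$ small enough, $\Phi$ will then map the $\varepsilon_1$-ball of $\mathfrak{A}$ into itself and be a strict contraction; the fixed point $F$, obtained on $[1,+\infty)$, extends to $[0,1]$ by local well-posedness of~\eqref{CNLS} backwards from $t=1$.

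The main obstacle will be the non-perturbative global construction of $W$ with a uniform $Z$-bound: this is the same difficulty faced by the profile of $U$ in Step~1 of Theorem~\ref{ModScatThm2} and requires not only the trilinear estimates of Lemma~\ref{lem46} but also the algebraic $Z$-cancellation for the resonant operator $\mathcal{R}$. Once $W$ is under control, the rest of the argument is routine, since the new forcing $\Phi(0)$ inherits from Lemma~\ref{FastOsLem} a decay rate substantially better than what is needed to close the contraction in $\mathfrak{A}$; in particular, no additional normal-form reduction or stationary-phase analysis is required beyond what has already been performed in Section~\ref{sect4}.
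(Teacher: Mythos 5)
Your proposal is correct and takes essentially the same approach as the paper: define $F=e^{-it\mathcal D}U-W$, seek a fixed point of the same map $\Phi$ in the same space $\mathfrak A$, and close the contraction using Proposition~\ref{StrucNon} and Lemma~\ref{lem46}. The paper's proof is terser (it simply says ``proceed as in Theorem~\ref{thm51}''), but the preliminary global control of $W$ in $X^+_\infty$ that you spell out, via the bootstrap and the $Z$-norm cancellation for $\mathcal R$, is indeed the implicit ingredient needed to make the argument go through.
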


\begin{proof} Define  $F(t)= e^{-it\mathcal{D}}U(t)-W(t)$. Is it enough to show that  the mapping
\begin{equation*}
\Phi(F)(t)=i\int_t^\infty\big\{\,\mathcal{N}^\sigma[F+W,F+W,F+W]-\mathcal{N}^{t}_{0}[W,W,W]\,\big\}d\sigma,
\end{equation*}
defines a   contraction in the space $\mathfrak{A}$. We can proceed as in Theorem~\ref{thm51} and with the estimates of Lemma~\ref{lem46}. We also refer to~\cite[Theorem 5.1]{HPTV} for the details.
\end{proof}

\subsubsection{ Proof of Proposition~\ref{prop15}}  Assume that $\kappa_{0}=1$. For some $\psi$ define
\begin{eqnarray*}
W(t,x,y)&:=&e^{-it\DD}\big(\psi(t,x)e^{2i(n+1)t}(y_{1}+iy_{2})^{n}e^{-\frac{|y|^{2}}2}\big)=\big(e^{it\partial^{2}_{x}} \psi(t,x)\big)(y_{1}+iy_{2})^{n}e^{-\frac{|y|^{2}}2}\\
&:=&f(t,x)g_{n}(y).
\end{eqnarray*}

$\bullet$ For the first point of the proposition, by Theorem~\ref{thm52}, it is enough to check that  $W$ defined above is solution to~\eqref{eq53}. Recall the definition~\eqref{defI} of $\mathcal{I}^{t}$, then using the definition~\eqref{RS} of  $\mathcal{T}$
\begin{equation*}
\mathcal{N}^{t}_{0}[W,W,W]=\mathcal{I}^{t}[f,f,f]\mathcal{T}[g_{n},g_{n},g_{n}]=\lambda_{n}g_{n}\mathcal{I}^{t}[f,f,f],
\end{equation*}
with $\lambda_{n}=\Vert  g_{n}\Vert^4_{L^4(\R^2)}/\Vert  g_{n}\Vert^2_{L^2(\R^2)}$ by~\eqref{gn}. Now, the equation~\eqref{eq53} is equivalent to $i\partial_{t}f=\lambda_{n}\mathcal{I}^{t}[f,f,f]$, which also reads $(i\partial_{t}-\partial^{2}_{x})\psi=\lambda_{n}|\psi|^{2}\psi$, but this is the case by assumption.
\medskip

$\bullet$ For the second point, we use that $i\partial_{t} \wh{G}=\frac{\pi}t \mathcal{T}[\wh{G},\wh{G},\wh{G}]$ is satisfied iff $i\partial_{t} \wh{f}=\frac{\pi \lambda_{n}}t |\wh{f}|^{2}\wh{f}$, which corresponds to $(i\partial_t -\partial^2_{x}) \psi=  \frac{\pi \lambda_n}{t}   \mathcal{F}_{\xi \to x}^{-1}\big(\vert \wh\psi\vert^2\wh\psi\big)$.
\medskip

$\bullet$ Concerning the symmetry assumption : we use that $g_{n}(R_{\theta}y)=e^{in\theta}g_{n}(y)$ and that $\Delta (U\circ R_{\theta})=(\Delta U)\circ R_{\theta}$.
\subsubsection{ Proof of Proposition~\ref{prop16}} We set $g=g_{+}$. For $\psi_{\pm}$ which satisfy the system~\eqref{syst1}, we define $f_{\pm}=e^{it\partial^{2}_{x}}\psi_{\pm}$. Then by Theorem~\ref{thm52}, it is enough to prove that $f_{+}g+f_{-}\ov{g}$ solves~\eqref{eq53}. 

First, we  claim that  $\TT[g,\ov{g},g]=0$. Actually, by definition \eqref{RS} of $\TT$ there exists $a,b\in \C$ such that  $\TT[g,\ov{g},g]=ag+b\ov{g}$, because $E_{1}=\operatorname{Span}\{g,\ov{g}\}$. Then, taking the scalar product with~$g$  we get $a=\int{\ov{g}g^{3}}=0$, and similarly $b=0$. The same argument also gives  $\TT[g,{g},g]= g/4$ (for more properties of $\TT$ we refer to \cite{GHT1}). 

Then by symmetry considerations, we get that~\eqref{eq53} is equivalent to  
$$ i\partial_{t} f_{+}=\lambda_{1}\big(\mathcal{I}^{t}(f_{+},f_{+},f_{+})+2\mathcal{I}^{t}(f_{+},f_{-},f_{-})\big),$$
 and an analogous equation for $f_{-}$, and this is the case if we use that $f_{\pm}=e^{it\partial^{2}_{x}}\psi_{\pm}$.
 
\subsubsection{ Proof of Proposition~\ref{prop17} }

Let $1\leq r(\xi)\leq 2$ be a smooth function and $\phi\in \mathcal{S}(\R)$. If $g(t,y)$ solves~\eqref{star}, then for all $\mu>0$, $\mu g(\mu^{2}t,\mu y)$ also solves~\eqref{star}. As a consequence,  $g(t,y,\xi)=r(\xi)e^{-|y|^2r^2(\xi)/2}e^{ir^2(\xi)t/2}$ is solution to~\eqref{star} and thus we get that
\begin{equation}\label{111}
\wh{G}(t,\xi,y)=\eps \phi(\xi) r(\xi)e^{-y^2r^2(\xi)/2}e^{i\eps^2\phi^2(\xi)r^2(\xi)t/2},
\end{equation}
is solution to~\eqref{RSS}. Therefore if $\eps>0$ is small enough, by Theorem~\ref{ExMWO} there exists  $U\in \mathcal{C}\big([0,+\infty); H^N(\R\times \R^2)\big)$ to~\eqref{CNLS} so that  
\begin{equation*}
\Vert  e^{-it\DD}U(t)-     G(\pi\ln t)  \Vert_{L^2(\R\times \R^2)}  \longrightarrow 0.
\end{equation*}
We proceed by contradiction. By Parseval in $x$,
\begin{multline*}
\Vert \wh{G}(\pi \ln t,\xi,y)-e^{-it(\xi^{2}+\HH_{2})} \sum_{j=1}^{n}\wh{\psi}_{j}(t,\xi)f_{j}(t,y)   \Vert_{L^2(\R\times \R^2)} =\\
\begin{aligned}
&=\Vert {G}(\pi \ln t,x,y)-e^{-it \mathcal{D}} \sum_{j=1}^{n} {\psi}_{j}(t,x)f_{j}(t,y)   \Vert_{L^2(\R\times \R^2)} \\
&\leq \Vert {G}(\pi \ln t)-e^{-it \mathcal{D}}U(t)  \Vert_{L^2(\R\times \R^2)} +\Vert  U(t)-  \sum_{j=1}^{n} {\psi}_{j}(t)f_{j}(t)   \Vert_{L^2(\R\times \R^2)},
\end{aligned}
\end{multline*}
and we assume that there is a choice of $\psi_{j}$ and $f_{j}$ so that the previous line tends to 0 when $t\longrightarrow+\infty$. But then by~\eqref{111} we have
\begin{equation*}
\big\Vert  \eps \phi(\xi)r(\xi)e^{-y^2r^2(\xi)/2}  -   \sum_{j=1}^{n} \Phi_{j}(t,\xi)F_{j}(t,y)    \big\Vert_{L^2(\R\times \R^2)} \longrightarrow 0,
\end{equation*}
with $\Phi_{j}(t,\xi):=e^{-i\eps^2\phi^2(\xi)r^2(\xi)\pi \ln t/2}e^{-it\xi^{2}}\wh{\psi}_{j}(t,\xi)$, $F_{j}(t,y):=e^{-it\HH_{2}}f_{j}(t,y)$ and by assumption $\|\Phi_{j}(t,\xi)\|_{L^{2}(\R)},\|F_{j}(t,y)\|_{L^{2}(\R^{2})}\lesssim 1$, uniformly in $t\in \R$. Therefore, there exists a sequence $t_{k}\longrightarrow +\infty$ such that  for all $1\leq j\leq n$, $\Phi_{j}(t_{k},\xi) \rightharpoonup \Phi^{\infty}_{j}(\xi)$ and $F_{j}(t_{k},y) \rightharpoonup F^{\infty}_{j}(y)$. This is a contradiction, since $\eps \phi(\xi)r(\xi)e^{-y^2r^2(\xi)/2} $    is clearly not a sum of separate variable functions.

\section{Transfer  Principle} \label{sect6}


  The goal of this section is to present and prove a lemma that will allow us to transfer $L^2$ estimates on operators into estimate in $S$ and $S^+$ norms. All the trilinear operators that appear in this paper are of the form:
\begin{multline}\label{triop}
\mathfrak T_m[F, G, H](x,y):=\\
=\mathcal F_x^{-1}\iint_{\R^{2}}m(\xi, \eta, \kappa) \mathcal{K}[\widehat F(\xi-\eta, \cdot), \widehat G(\xi-\eta-\kappa,\cdot), \widehat H(\xi-\kappa, \cdot)](y) \, d\eta d\kappa
\end{multline}
where $\mathcal K$ is a trilinear operator on functions from $\R^{d}_y\to \C$. Actually, we are in one of the following cases
\begin{enumerate}[\qquad (1)]
\item Define the operator $\mathcal J$   for three functions $\mathfrak f,\mathfrak g, \mathfrak h: \R_y^{d}\to \C$ as
\begin{equation*} 
\mathcal J^{t}[\mathfrak f,\mathfrak g, \mathfrak h]= e^{-it\HH_{d}}\left(e^{-it\HH_{d}} \mathfrak f \,e^{-it\HH_{d}}\mathfrak g\, e^{-it\HH_{d}} \mathfrak h\right).
\end{equation*}
Then in Lemmas~\ref{wup} and~\ref{BilEf} we estimate
\begin{equation*}
\mathcal N^{t}[F, G, H](x,y)= \mathcal F_x^{-1}\iint_{\R^{2}} e^{2it \eta\kappa}\mathcal{J}^{t}[\widehat F(\xi-\eta, \cdot), \widehat G(\xi-\eta-\kappa,\cdot), \widehat H(\xi-\kappa, \cdot)](y) \, d\eta d\kappa.
\end{equation*}

\item Recall that the operator $\mathcal R$ is defined in~\eqref{RSS}. Then in Lemma~\ref{lem46} we consider
\begin{equation*}
\mathcal N_{0}^{t}[F, G, H](x,y)= \mathcal F_x^{-1}\iint_{\R^{2}} e^{2it \eta\kappa}\mathcal{R}[\widehat F(\xi-\eta, \cdot), \widehat G(\xi-\eta-\kappa,\cdot), \widehat H(\xi-\kappa, \cdot)](y) \, d\eta d\kappa.
\end{equation*}
\item We have to estimate the operators $\mathcal{O}^{t}_{1}$ and $\mathcal{O}^{t,\tau}_{2}$ defined in~\eqref{defO1} and~\eqref{defO2} respectively.\vspace{5pt}
\item  In Lemma~\ref{lem46} we have to estimate a difference of  previous quantities.
\end{enumerate}

We define a LP-family $\widetilde{Q}=\{\widetilde{Q}_A\}_A$ to be a family of operators (indexed by the dyadic integers) of the form
\begin{equation*}
\widehat{\widetilde{Q}_{1}f}(\xi)=\widetilde{\varphi}(\xi)\widehat{f}(\xi),\qquad \widehat{\widetilde{Q}_Af}(\xi)=\widetilde{\phi}(\frac{\xi}{A})\widehat{f}(\xi), \quad A\geq 2
\end{equation*}
for two smooth functions $\widetilde{\varphi},\widetilde{\phi}\in \mathcal{C}^\infty_c(\mathbb{R})$ with $\widetilde{\phi}\equiv0$ in a neighborhood of $0$. We also define the set of admissible transformations to be the family of operators $\{T_B\}$ where for any $B$,
\begin{equation*}
T_B=\lambda_B\widetilde{Q}_B,\qquad \vert\lambda_B\vert\leq 1
\end{equation*}
for some LP-family $\widetilde{Q}$.

Given a multiplier $m(\xi, \eta, \kappa)$ and a subset $\Lambda$ of 4-tuples of dyadic integers, we define a localization of $\mathfrak T_m$ as the operator:
$$
\mathfrak T_m^\Lambda[F, G, H]=\sum_{(A,B, C,D)\in \Lambda} T_A\mathfrak T_m[T'_B F, T''_CG, T'''_D H]
$$
where $T, T', T'',$ and $T'''$ are admissible operators. It's not hard to see that $\mathfrak T_m^\Lambda$ is also formally of the type~\eqref{triop}. Note that a localization is determined by the set $\Lambda$ and the admissible operators $T, T',T'', T'''$.

 Finally, we say that a norm $\mathcal{B}$ is admissible if
\begin{enumerate}[\qquad (1)]
\item For any admissible transformation $T=\{T_A\}_A$, there holds that
\begin{equation*} 
\Vert \sum_A T_AF\Vert_{\mathcal{B}}\lesssim \Vert F\Vert_{\mathcal{B}}.
\end{equation*}
\item If $\Xi=(\partial_{y_{1}}, \ldots, \partial_{y_d}, y_{1},\ldots, y_d)$, $\alpha\in \N^{2d}$ is a multi-index, and $\psi \in \mathcal{C}_c^\infty(\R)$ then 
\begin{equation*}
\|\Xi^\alpha \psi(\nu^{-2}\HH) f\|_{\mathcal B}\lesssim_{\psi} \nu^{|\alpha|}\|f\|_{\mathcal B}.
\end{equation*}
\end{enumerate}

All norms that we consider are admissible. In particular, the second point is a consequence of the equivalence of norms~\eqref{eq} and Lemma~\ref{lem.com}.\medskip

As remarked in~\cite{HPTV}, we have the following Leibniz rule for $\mathcal{I}^{t}[f,g,h]$, namely
\begin{equation} \label{leib}
Z\mathcal{I}^{t}[f,g,h]=\mathcal{I}^{t}[Zf,g,h]+\mathcal{I}^{t}[f,Zg,h]+\mathcal{I}^{t}[f,g,Zh],\quad Z\in\{ix,\partial_x\}.
\end{equation}
For $Z=\partial_x$ this follows from the fact that  for all $t\in \R$, $[\UU(t),\partial_x]=0$. When $Z=ix$, we use the relation $x\UU(t)=\UU(t)(x+2it\partial_x)$. This implies that if  $m=m(\eta, \kappa)$ in~\eqref{triop} is independent of $\xi$, then  $Z\mathfrak{T}[F,G,H]= \mathfrak{T}[ZF,G,H]+\mathfrak{T}[F,ZG,H]+\mathfrak{T}[F,G,ZH]$, 
for $Z\in\{ix,\partial_x\}$.

 One also has the relation Leibniz-type formula:
  \begin{equation}\label{HandT}
\HH \mathcal R[F, G, H]=\mathcal R[\HH F, G, H]-\mathcal R[F, \HH G, H]+\mathcal R[F, G,\HH H].
\end{equation}
Thanks to these relations, we are able to dispatch the derivatives in the tri-linear term.

\begin{lemma}\label{ControlSS+}
Let $\mathfrak T_m$ be an operator as in~\eqref{triop} with $m=m(\eta, \kappa)$ being independent of $\xi$. Let $\Lambda$ be a set of $4$-tuples of dyadic integers and suppose that for all localizations of $\mathfrak{T}_m$ to $\Lambda$, we have
\begin{equation*} 
\Vert \mathfrak{T}_m^\Lambda[F^{a},F^{b},F^{c}]\Vert_{L^2_{x,y}}\leq K\min_{\sigma\in\mathfrak{S}_3}\Vert F^{\sigma(a)}\Vert_{L_{x,y}^2}\Vert F^{\sigma(b)}\Vert_{\mathcal{B}}\Vert F^{\sigma(c)}\Vert_{\mathcal{B}}
\end{equation*}
for some admissible norm $\mathcal{B}$. Then, for any localization $\mathfrak{T}_m$ at $\Lambda$,
\begin{equation}\label{CS}
\begin{split}
\Vert \mathfrak{T}_m^\Lambda[F^{a},F^{b},F^{c}]\Vert_{S}&\lesssim K\max_{\sigma\in\mathfrak{S}_3}\Vert F^{\sigma(a)}\Vert_{S}\Vert F^{\sigma(b)}\Vert_{\mathcal{B}}\Vert F^{\sigma(c)}\Vert_{\mathcal{B}}.
\end{split}
\end{equation}
Assume in addition that, for $Y\in\{x,(1-\partial_{xx})^4\}$,
\begin{equation}\label{FL2}
\Vert YF\Vert_{\mathcal{B}}\lesssim \theta_{1}\Vert F\Vert_{S^+}+\theta_{2}\Vert F\Vert_{S},
\end{equation}
then for all realizations of $\mathfrak{T}_m$ at $\Lambda$,
\begin{equation}\label{CS+}
\begin{split}
\Vert \mathfrak{T}_m^\Lambda[F^{a},F^{b},F^{c}]\Vert_{S^+}&\lesssim K\max_{\sigma\in\mathfrak{S}_3}\Vert F^{\sigma(a)}\Vert_{S^+}\big(\Vert F^{\sigma(b)}\Vert_{\mathcal{B}}+\theta_{1}\Vert F^{\sigma(b)}\Vert_{S}\big) \Vert F^{\sigma(c)}\Vert_{\mathcal{B}}\\
&\quad+\theta_{2}K\max_{\sigma\in\mathfrak{S}_3}\Vert F^{\sigma(a)}\Vert_{S}\Vert F^{\sigma(b)}\Vert_{S}\Vert F^{\sigma(c)}\Vert_{\mathcal{B}}.
\end{split}
\end{equation}
\end{lemma}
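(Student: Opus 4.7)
The plan is to reduce both~\eqref{CS} and~\eqref{CS+} to the $L^2$ hypothesis by distributing, via Leibniz-type identities, every operator appearing in the $S$- and $S^+$-norms onto a single input of $\mathfrak T_m^\Lambda$, and then invoking the hypothesis with the permutation $\sigma$ chosen so that the modified factor lands in the $L^2$-slot. The key identities are those already recorded in~\eqref{leib}: since $m=m(\eta,\kappa)$ is independent of $\xi$, the decomposition $\xi=(\xi-\eta)-(\xi-\eta-\kappa)+(\xi-\kappa)$ together with the trilinearity of $\mathcal K$ shows that $Z\in\{ix,\partial_x\}$ distributes across the three input slots of $\mathfrak T_m^\Lambda$, up to harmless commutators with the admissible localizations $T_B$ that are controlled by~\eqref{comm} and absorbed into the $\mathcal B$-norm via admissibility. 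For the $y$-part of $\|F\|_{H^N_{x,y}}$, I would use the equivalent characterization $\|\mathcal H_d^{N/2}F\|_{L^2}\sim\|(-\Delta_y)^{N/2}F\|_{L^2}+\||y|^N F\|_{L^2}$ from~\eqref{eq}: derivative-type operators distribute by Leibniz, while the $|y|^N$ weight is handled by a dyadic $O_L$ decomposition in $y$ combined with the admissibility property (2) of $\mathcal B$, which absorbs the spectral projectors $\psi(L^{-2}\mathcal H_d)$ at the cost of powers of $L$ that exactly match the weight in~\eqref{eq}.

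With these tools,~\eqref{CS} follows by writing $\|F\|_S\sim\|F\|_{L^2}+\|\partial_x^N F\|_{L^2}+\|\mathcal H_d^{N/2}F\|_{L^2}+\|xF\|_{L^2}$, distributing each of $\partial_x^N$, $\mathcal H_d^{N/2}$, and $x$ onto a single input of $\mathfrak T_m^\Lambda[F^a,F^b,F^c]$, and applying the hypothesis with $\sigma$ chosen so the modified factor sits in the $L^2$-slot. Since $\|ZF^{\sigma(a)}\|_{L^2}\lesssim\|F^{\sigma(a)}\|_S$ for each of the relevant $Z$, and the $\mathcal B$-norms of the remaining two factors survive the LP-families and spectral cutoffs by admissibility, taking the maximum over $\sigma$ yields the claimed bound. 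Estimate~\eqref{CS+} then follows from~\eqref{CS} by the same scheme applied to the extra pieces $\|(1-\partial_{xx})^4\mathfrak T_m^\Lambda[\ldots]\|_S$ and $\|x\mathfrak T_m^\Lambda[\ldots]\|_S$: distributing $(1-\partial_{xx})^4$ or $x$ via Leibniz onto one input reduces matters to applying~\eqref{CS} with one input modified, and when that modified factor lands in a $\mathcal B$-slot we invoke~\eqref{FL2} to pay either $\theta_1\|\cdot\|_{S^+}$ or $\theta_2\|\cdot\|_S$, producing exactly the two terms on the right-hand side of~\eqref{CS+}.

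The main technical obstacle is the handling of the $y$-multiplication operators inside $\mathcal H_d^{N/2}$: unlike $x$ and $\partial_x$, multiplication by $|y|^N$ does not in general commute with the semigroup $e^{it\mathcal H_d}$ present in the kernels $\mathcal K$ of $\mathcal N^t$, $\mathcal O_1^t$, and $\mathcal O_2^{t,\tau}$, so a clean Leibniz rule is not available for those operators in the way it is for $\mathcal R$ (cf.~\eqref{HandT}). The remedy is the dyadic $y$-spectral decomposition together with admissibility property (2), which shifts the role of the $|y|^N$ weight onto the frequency side, where it is compensated by the LP-factor on the output; this is the analogue in the harmonic-oscillator setting of the corresponding argument in~\cite[Lemma~2.5]{HPTV}.
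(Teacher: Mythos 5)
Your general architecture is right — reduce to the $L^2$ hypothesis by distributing operators via Leibniz-type identities onto a single input, then pick the permutation $\sigma$ so the modified factor lands in the $L^2$ slot, and pay $\mathcal B$-norms on the others. You also correctly identify the commutator issue with the localizations (controlled via~\eqref{comm} and admissibility), and your handling of~\eqref{CS+} via~\eqref{xtm} and~\eqref{FL2} matches the paper. However, there is a genuine gap in how you treat the $y$-part of the $H^N$ norm.

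You assert that ``a clean Leibniz rule is not available'' for the $y$-operators through the semigroup $e^{it\HH_d}$ in the kernels $\mathcal J^t$, and you propose a workaround via a dyadic $O_L$ decomposition in the $\HH_d$-spectrum. This misreads the situation in two ways. First, the Leibniz rule \emph{does} exist, provided one works with the vector $\Xi=(\partial_{y_1},\ldots,\partial_{y_d},y_1,\ldots,y_d)$ as a block rather than with $\partial_y$ and $y$ separately: the commutation identities in Lemma~\ref{lem.com}, $\partial_{y_j}e^{it\HH}=e^{it\HH}\big(\cos(2t)\partial_{y_j}+i\sin(2t)y_j\big)$ and $y_je^{it\HH}=e^{it\HH}\big(i\sin(2t)\partial_{y_j}+\cos(2t)y_j\big)$, show that conjugating any $\Xi^\alpha$ by $e^{\pm it\HH}$ produces a bounded combination of $\Xi^\beta$ with $|\beta|\leq|\alpha|$, after which the \emph{ordinary} pointwise Leibniz rule distributes it across the three factors of the product. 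Your claim that $\partial_{y_j}$ distributes by Leibniz and only $|y|^N$ is problematic is also wrong: neither is preserved individually under conjugation by $e^{it\HH}$; only the pair is. Second, your proposed workaround is not carried to completion: you would still need a way to move the $\HH_d$-frequency localization on the output past the kernel onto an input, and without a Leibniz rule on the kernel side, the dyadic decomposition alone does not do that. The paper's route is to bound $M^s\|\mathcal P_M\cdot\|_{L^2}\lesssim M^{-s}\|\mathcal H^s\mathcal P_M\cdot\|_{L^2}$, expand $\mathcal H^s$ into $\Xi^\alpha$ terms with $|\alpha|\leq 2s$, distribute these via Lemma~\ref{lem.com}, and then exploit the low/high decomposition $\mathfrak T^\Lambda_{m,M,low}+\mathfrak T^\Lambda_{m,M,high}$ plus Schur's test to sum over dyadic blocks — steps you do not spell out. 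You will want to incorporate the commutation lemma and the low/high split explicitly to make the argument close.
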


\begin{proof}
We start with~\eqref{CS}: \medskip

 {\bf Bound of the $\boldsymbol{L^{2}}$ component of the $\boldsymbol{S}$ norm:} At first, we notice that if $Q_{A}$ is a LP family, then
$$
[x, Q_{A}]=A^{-1}Q'_A
$$
where $Q'_A$ is another LP family. Obviously, $T_A=A^{-1}Q'_A$ is an admissible transformation. As a result, from~\eqref{leib} we have that:
\begin{equation}\label{xtm}
\begin{split}
x\mathfrak T_m^\Lambda[F,G,H]=&\mathfrak T_m^\Lambda[xF,G,H]+\mathfrak T_m^\Lambda[F,xG,H]+\mathfrak T_m^\Lambda[F,G,xH]+\mathfrak T^\Lambda_m[F, G, H]
\end{split}
\end{equation}
where $\mathfrak T_m^\Lambda$ appearing on the right-hand side above is a different localization of $\mathfrak T_m^\Lambda$. From this the bound on the weighted component of the $S$ norm follows. \medskip

 {\bf Bound of the $\boldsymbol{H^{N}}$ component of the $\boldsymbol{S}$ norm:} Let $\mathcal P_N$ denote the Littlewood-Paley projection either the $x-$direction or $y-$direction. We can decompose
$$
\mathcal P_M\mathfrak T_m^\Lambda=\mathcal P_M \mathfrak T_{m, M, low}^\Lambda+\mathcal P_M \mathfrak T_{m, M, high}^\Lambda
$$
where
$$
\mathfrak T_{m, M, low}^\Lambda:= \mathfrak T_{m}^\Lambda[\mathcal P_{\leq M} F, \mathcal P_{\leq M} G,\mathcal P_{\leq M} H]
$$
and 
$$
\mathfrak T_{m, M, high}^\Lambda:=\mathfrak T_{m}^\Lambda[\mathcal P_{\geq 2M} F, G, H]+ \mathfrak T_{m}^\Lambda[\mathcal P_{\leq M} F, \mathcal P_{\geq 2M} G, H]+\mathfrak T_{m}^\Lambda[\mathcal P_{\leq M} F, \mathcal P_{\leq M} G, \mathcal P_{\geq 2M} H].
$$

 The bound on the contribution of $\mathfrak T_{m, M, high}^\Lambda$ is straightforward: For example, for the term with $\mathcal P_{\geq 2N} F$, we have
\begin{eqnarray*}
\sum_{M \geq 1} M^{2s}\big\|\mathcal P_M \mathfrak T_{m}^\Lambda[\mathcal P_{\geq 2M} F, G, H]\big\|_{L_{x,y}^2}^2&\leq& K^2 \sum_{M \geq 1} M^{2s}\|\mathcal P_{\geq 2M} F\|_{L_{x,y}^2}^2 \|G\|_{\mathcal B}^2\|H\|_{\mathcal B}^2\\
&\lesssim& K^2 \|F\|_{H^s}^2\|G\|_{\mathcal B}^2\|H\|_{\mathcal B}^2.
\end{eqnarray*}

We now treat the contribution of the term  $\mathfrak T_{m, M, low}^\Lambda$. By~\eqref{Full Sobolev2} we can consider the localizations in $x$ and in $y$ separately. \medskip

$\bullet$ When $\mathcal P_M=Q_M$ is the localization in the $x-$direction, one can bound the contribution of $\mathfrak T_{m, M, low}^\Lambda$ as follows: 
\begin{eqnarray*}
M^s \|Q_M\mathfrak T_{m, M, low}^\Lambda\|_{L_{x,y}^2}&\lesssim& M^{-s}\big\|\partial_{x}^{2s}Q_M\mathfrak T_{m, M, low}^\Lambda\big\|_{L_{x,y}^2}\\[4pt]
&=& M^{-s}\big\|\partial_{x}^{2s}Q_M\mathfrak T_{m}^\Lambda[Q_{\leq M} F, Q_{\leq M} G,Q_{\leq M} H]\big\|_{L_{x,y}^2}\\[4pt]
&\lesssim&M^{-s}\sum_{a+b+c\leq 2s}\sum_{M_{1}, M_{2}, M_3 \leq M}\big\|Q_M\mathfrak T_{m}^\Lambda[\partial_x^{a}   Q_{M_{1}} F, \partial_x^{b} Q_{M_{2}} G, \partial_{x}^{c} Q_{M_3}H]\big\|_{L_{x,y}^2}.
\end{eqnarray*}

Assuming without loss of generality that $M_{1}\geq M_{2}, M_3$ (the other cases being similar), we can bound the above by 
\begin{align*}
\lesssim K  \sum_{M_{1} \leq M}\left(\frac{M_{1}}{M}\right)^s(M_{1}^s\|Q_{M_{1}} F\|_{L_{x,y}^2})\|G\|_{\mathcal B}\|H\|_{\mathcal B}
\end{align*}
which is square-summable in $M$ by Schur's test. \medskip

$\bullet$  The case when $\mathcal P$ is the Littlewood-Paley projection in the $y-$variable is a bit more tedious and depends on whether the operator $\mathcal K$ in~\eqref{triop} is $\mathcal J$ or $\mathcal R$. When $\mathcal J=\mathcal R$, we argue exactly as above for $\mathcal P=Q_M$ thanks to the Leibniz-type rule:
$$
\HH \mathcal R[\mathfrak f, \mathfrak g, \mathfrak h]= \mathcal R[\HH \mathfrak f, \mathfrak g, \mathfrak h]-\mathcal R[\mathfrak f, \HH \mathfrak g, \mathfrak h]+ \mathcal R[\mathfrak f, \mathfrak g, \HH \mathfrak h].
$$
On the other hand, if $\mathcal K=\mathcal J^{t}$, then using the Lemma~\ref{lem.com} below we have:
\begin{align*}
M^s \|\mathcal P_M\mathfrak T_{m, M, low}^\Lambda\|_{L_{x,y}^2}\lesssim& M^{-s}\|\mathcal H^s \mathcal P_M\mathfrak T_{m, M, low}^\Lambda\|_{L_{x,y}^2} \\
\lesssim&M^{-s}\sum_{|\alpha|+|\beta|+|\gamma|\leq 2s}\sum_{M_{1}, M_{2}, M_3 \leq M}\big\|\mathcal P_M\mathfrak T_{m}^\Lambda[ \Xi^\alpha \mathcal P_{M_{1}} F, \Xi^\beta \mathcal P_{M_{2}} G, \Xi^\gamma \mathcal P_{M_3}H]\big\|_{L_{x,y}^2}
\end{align*}
where $\Xi=(\partial_{y_{1}}, \ldots, \partial_{y_d}, y_{1},\ldots, y_d)$ and $\alpha, \beta, \gamma\in \N^{2d}$ are multi-indices.

 Assuming again without loss of generality that $M_3 \leq M_{2} \leq M_{1}$ and using the fact that $\mathcal B$ is admissible, we can bound the above sum by
$$
\lesssim K\sum_{M_{1}\leq M}\left(\frac{M_{1}}{M}\right)^s (M_{1}^s \|\mathcal P_{M_{1}} F\|_{L_{x,y}^2})\|G\|_{\mathcal B}\|H\|_{\mathcal B},
$$
 which is square-summable in $M$. \ligne
 
Proof of~\eqref{CS+}:  we first notice that $\|xF\|_{S}$ can be bounded by the RHS of~\eqref{CS+} by combining~\eqref{xtm},~\eqref{CS}, and~\eqref{FL2}.  The part of the $S^+$ norm involving $(1-\partial_{xx})^4$ can be bounded in a similar fashion as above. This concludes the proof.
\end{proof}
\ligne
Finally, we state an elementary commutation result (see also~\cite[Lemma 4.1]{AnCaSi} for more properties of these operators)
\begin{lemma}\label{lem.com}
For all $t\in \R$ and $1\leq j\leq d$
\begin{equation}\label{com1}
\partial_{y_j} e^{it\HH}=e^{it\HH}\big(\cos(2t)  \partial_{y_j}+i\sin(2t){y_j} \big), 
\end{equation}
and
\begin{equation}\label{com2}
{y_j} e^{it\HH}=e^{it\HH}\big(i\sin(2t)  \partial_{y_j}+\cos(2t){y_j} \big).
\end{equation}
\end{lemma}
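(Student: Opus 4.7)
The plan is to verify both identities by differentiating with respect to $t$ and identifying the resulting expressions as solutions of a simple linear $2\times 2$ ODE system.

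First, I would observe that since $\HH=\sum_{k=1}^{d}\HH^{(k)}$ with $\HH^{(k)}=-\partial_{y_k}^2+y_k^2$, and the operators $\partial_{y_j},\,y_j$ commute with $\HH^{(k)}$ for all $k\neq j$, it suffices to prove the identities in one dimension (and then tensor with the identity in the remaining variables). So assume $d=1$ and drop the subscript $j$.

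Next, define the Heisenberg-picture operators
\begin{equation*}
A(t):=e^{-it\HH}\,\partial_{y}\,e^{it\HH},\qquad B(t):=e^{-it\HH}\,y\,e^{it\HH},
\end{equation*}
initially equal to $A(0)=\partial_{y}$ and $B(0)=y$. Differentiating yields $A'(t)=ie^{-it\HH}[\partial_{y},\HH]e^{it\HH}$ and $B'(t)=ie^{-it\HH}[y,\HH]e^{it\HH}$. A direct computation gives $[\partial_{y},\HH]=[\partial_{y},y^{2}]=2y$ and $[y,\HH]=-[y,\partial_{y}^{2}]=2\partial_{y}$, so the system reduces to
\begin{equation*}
A'(t)=2i\,B(t),\qquad B'(t)=2i\,A(t),
\end{equation*}
with the given initial data. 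Solving by noting $A''=-4A$, $B''=-4B$, and fitting the initial conditions, one obtains
\begin{equation*}
A(t)=\cos(2t)\,\partial_{y}+i\sin(2t)\,y,\qquad B(t)=\cos(2t)\,y+i\sin(2t)\,\partial_{y}.
\end{equation*}
Multiplying through by $e^{it\HH}$ on the left yields exactly \eqref{com1} and \eqref{com2}.

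There is no real obstacle here; the only mild subtlety is the usual domain issue (these identities hold rigorously first on a dense core such as the finite linear span of Hermite functions, where $e^{it\HH}$ acts diagonally and both sides are well-defined and bounded, and then extend by density to Schwartz functions or to the appropriate harmonic Sobolev spaces via the equivalence \eqref{eq}). One could alternatively verify the identity directly on Hermite eigenfunctions using the creation/annihilation operator decomposition $\partial_{y}=\tfrac{1}{\sqrt{2}}(a-a^{*})$, $y=\tfrac{1}{\sqrt{2}}(a+a^{*})$, since $e^{it\HH}$ conjugates $a$ into $e^{-2it}a$ and $a^{*}$ into $e^{2it}a^{*}$; this recovers the same trigonometric combinations and gives a one-line proof. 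Either route is routine.
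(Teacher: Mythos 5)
Your proof is correct and follows essentially the same route as the paper: define the Heisenberg-picture operators $e^{-it\HH}\partial_{y_j}e^{it\HH}$ and $e^{-it\HH}y_j e^{it\HH}$, differentiate in $t$ using the commutators, obtain a constant-coefficient ODE system with the right initial data, and solve it via the second-order equation $\varphi''=-4\varphi$. Your added remarks on the dense core and the alternative creation/annihilation derivation are fine but not part of the paper's argument.
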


\begin{proof}
For $f\in \mathcal{S}(\R^d)$, denote by $\varphi(t)=e^{-itH}\partial_{y_j}e^{itH}f$. We have $[H, y_j]=-2\partial_{y_j}$ and $[H, \partial_{y_j}]=-2{y_j}$. Then 
\begin{equation*}
\varphi'(t)=2ie^{-itH}{y_j}e^{itH}f,\quad \varphi''(t)=-4e^{-itH}\partial_{y_j}e^{itH}f=-4\varphi(t).
\end{equation*}
By solving the differential equation, we get~\eqref{com1}, and by computing $\varphi'$, we get~\eqref{com2}.
\end{proof}

\end{document}